\patchcmd{\@thm}{\thm@headfont{\scshape}}{\thm@headfont{\scshape\bfseries}}{}{}
\patchcmd{\@thm}{\thm@notefont{\fontseries\mddefault\upshape}}{}{}{}
\newtheorem{theorem}{Theorem}[section]
\newtheorem{proposition}[theorem]{Proposition}
\newtheorem{lemma}[theorem]{Lemma}
\newtheorem{corollary}[theorem]{Corollary}
\newtheorem{remark}[theorem]{Remark}
\newtheorem{definition}[theorem]{Definition}
\newtheorem{situation}[theorem]{Situation}
\newtheorem{example}[theorem]{Example}
\newcommand{\integers}{\mathbb{Z}}
\newcommand{\complexns}{\mathbb{C}}
\newcommand{\comment}[1]{}
\newcommand*{\pic}{\text{Pic}}
\newcommand{\etale}{étale}
\renewcommand{\hom}{\text{Hom}}
\newcommand{\projective}{\mathds{P}}
\newcommand{\etalecohomology}[3]{\text{H}^{#1}_{\acute{e}t}(#2,#3)}
\newcommand{\aut}{\text{Aut}}
\newcommand{\cohomology}[3]{\text{H}^{#1}(#2,#3)}
\newcommand{\sheaves}{\mathit{Sh}}
\newcommand{\absheaves}{\textit{Ab}}
\newcommand{\morphisms}{\textit{Mor}}
\newcommand{\presheaves}{\textit{PSh}}
\title{Torsors and Gerbes}
\author{Ashwin Deopurkar}
\address{Ashwin Deopurkar}
\email{ard2144@columbia.edu}
\date{\today}
\begin{document}    
\maketitle
\section{Introduction} \label{section_torsors_gerbes}
\newcommand{\objects}{\text{Ob}}
\newcommand{\invariants}{A^{G}}
\newcommand{\autgroup}{\integers/ 2 \integers}
\newcommand{\xdown}{X}
\newcommand{\xup}{\widetilde{X}}
\newcommand{\isom}{\text{Isom}}
\newcommand{\gerbe}{\mathscr{Y}}
\newcommand{\band}{\text{Band}}
\newcommand{\thesite}{X_{\tau}}
\newcommand{\gerbeup}{\widetilde{A}\emph{-torsors}}
\newcommand{\gerbedown}{\mathscr{Y}}
\newcommand{\trivialgerbe}{\mathscr{Y}^{triv}}
\newcommand{\trivialaggerbe}{\overline{\mathscr{Y}}^{triv}}
\newcommand{\sheafonc}{\widetilde{A}_{\mathscr{C}}} 
\newcommand{\gerbeonc}{\gerbe_{\mathscr{C}}}
\newcommand{\gerberho}{\mathscr{Y}_{\rho}}
\newcommand{\sheafaonc}{A_{\mathscr{C}}}
\newcommand{\gloabatilda}{\widetilde{A}(\xup)}
\newcommand{\zgerbe}{\mathscr{Z}}
\newcommand{\fibprocat}{\mathscr{C}}
\newcommand{\gerbetilda}{\widetilde{\mathscr{Y}}}
\newcommand{\etalesite}{X_{\text{ét}}}
\newcommand{\torsorcat}[1]{\underline{A\mbox{-torsors}}}
\newcommand{\basecat}{\mathscr{C}}
\newcommand{\gmsheaf}{\mathbf{G}_{m}}

\begin{sloppypar}
  Let $\thesite$ be a site with a final object $\xdown$. Let $A$ be an abelian sheaf on $\thesite$ on which we have an action of a finite group $G$. Let $\invariants$ denote the sheaf of invariants. Then given an $A\mbox{-torsor}$ (respectively a gerbe banded by $A$), we would like to know under what conditions it is induced from an $\invariants\mbox{-torsor}$ (respectively a gerbe banded by $\invariants$) (Definition \ref{def-induced-torsor-and-gerbe}). It is easy to see that it is necessary for the torsor or the gerbe to admit a lift of the group action (Definition \ref{definition-G-action-on-torsor}, Definition \ref{def-group-action-on-gerbe}). Given such a $G\mbox{-action}$, we give explicit construction of an obstruction class which is a cohomology class in ${\cohomology{2}{G}{A(\xdown)}}$ for torsors and in ${\cohomology{3}{G}{A(\xdown)}}$ for gerbes. We prove that if a torsor or a gerbe is induced from the invariants then the obstruction class is zero (Proposition \ref{prop-induced-torsor-and-gerbes}). Moreover, the vanishing of the obstruction class is sufficient for the reverse conclusion for torsors when the first group cohomology classes locally vanish (Definition \ref{def-group-cohomlogies-vanish-locally}). Similarly for gerbes, the vanishing of the obstruction class in ${\cohomology{3}{G}{A(\xdown)}}$ is sufficient if we assume that the first two group cohomology classes locally vanish (Theorem \ref{theorem-induced-from-invariants-when-cohomology-class-is-zero}). The first two sheaf cohomology groups $\cohomology{i}{\xdown}{A}$ can be realized concretely as the group of $A\mbox{-torsors}$ and the group of abelian gerbes banded by $A$. Using this realization and the notion of obstruction classes, we describe morphisms of cohomologies in low degrees which arise from a variant of the Hoschild-Serre spectral sequence.

  As an application we obtain the following result (Proposition \ref{prop_galois_cover_line_bundle_exists}). Let $\xdown$ be a scheme such that $\etalecohomology{2}{\xdown}{\gmsheaf}$ is zero. Suppose ${\xup \xrightarrow{\pi} \xdown}$ is a finite Galois \etale{} cover with Galois group $G$. Then for any homomorphism ${\Phi: G \to \pic(\xdown)}$, the group cohomology class of ${\{\pi^*\Phi(g)\}_{g \in G}}$ in ${\cohomology{1}{G}{\pic(\xup)}}$ is zero. In the case of a double cover of a curve over an algebraically closed field, this was observed by Mumford in the context of theta characteristics (See Lemma 1, \cite{mumford_theta_characteristics}). Since our proof uses purely cohomological methods, it also applies when $\xdown$ is a stacky curve (for example a twisted curve) as long as ${\etalecohomology{2}{\xdown}{G_m}}$ is zero. Although we do not delve into it in this article, these considerations are related to cup products in sheaf cohomology (See Section 2, \cite{antieau2020brauer}). We use this to study Weil pairing on twisted curves in our upcoming paper.
\end{sloppypar}

\section{Preliminaries}\label{section-Preliminaries}
We use notations and conventions as in Stacks Project (\cite[\href{https://stacks.math.columbia.edu/tag/0013}{Tag 0013}]{stacks-project}) for categories and sites (\cite[\href{https://stacks.math.columbia.edu/tag/00VG}{Tag 00VG}]{stacks-project}). Let $\thesite$ be a site with a final object $\xdown$. Let $A$ be an abelian sheaf on $\thesite$. By $\cohomology{i}{\xdown}{-}$ we mean the sheaf cohomology on the site $\thesite$. 
\begin{definition}\label{definition-a-torsor}
  \begin{sloppypar}
    An $A\mbox{-torsor}$ $y$ on $\xdown$ is a sheaf of sets $y$ on $\thesite$ together with a group action ${A \times y \to y}$ such that
  \end{sloppypar}
\begin{itemize}
\item For every object $U$ in $\thesite$, the map $A(U) \times y(U) \to y(U)$ is simply transitive whenever $y(U)$ is non-empty.
\item For every object $U$ in $\thesite$, there exists a cover $\{U_i \to U\}_{i \in I}$ such that $y(U_i)$ is non-empty for every $i \in I$. 
\end{itemize}
We say $y$ is a pseudo $A\mbox{-torsor}$, if it satisfies the above conditions except possibly the last. The sheaf $A$ itself is an $A\mbox{-torsor}$ with the action given by the group law. We refer to $A$ as the trivial torsor.     
\end{definition}
\begin{definition}
  \begin{sloppypar}
    Let $A_1$ and $A_2$ be abelian sheaves on $\xdown$. Let $y_1$, $y_2$ be an $A_1\mbox{-torsor}$ and an $A_2\mbox{-torsor}$, respectively. Suppose we have a map ${f: A_1 \to A_2}$ of abelian sheaves. Then an $f\mbox{-equivariant}$ morphism ${\lambda : y_1 \to y_2}$ is a morphism of sheaves such that the following diagram commutes.
  \end{sloppypar}
  \[
    \begin{tikzcd}
      A_1 \times y_1 \arrow{d}{f \times \lambda} \arrow{r} & y_1 \arrow{d}{\lambda} \\
      A_2 \times y_2 \arrow{r} & y_2 
    \end{tikzcd}
  \]
An $A\mbox{-equivariant}$ morphism or a morphism of $A\mbox{-torsors}$ is a morphism of sheaves which is equivariant with respect to the identity map from $A$ to itself. 
\end{definition}

\begin{sloppypar}
  The category $\torsorcat{A}$ is defined as follows (See \cite[\href{https://stacks.math.columbia.edu/tag/036Z}{Tag 036Z}]{stacks-project}). The objects of $\torsorcat{A}$ are pairs $(U,y)$ where $U$ is an object of $\thesite$ and $y$ is an $A|_{U}$-torsor on $\thesite/U$. A morphism ${(U,x) \to (V,y)}$ is a pair $(f,\lambda)$ where ${f: U \to V}$ is a morphism in $\thesite$ and $\lambda: f^{-1}y \to x$ is a morphism of $A|_{U}$-torsors on $\thesite/U$. The category $\torsorcat{A}$ is a stack in groupoids over $\thesite$ (See \cite[\href{https://stacks.math.columbia.edu/tag/04UJ}{Tag 04UJ}]{stacks-project}).
\end{sloppypar}
\begin{definition}\label{definition-abelian-gerbe-definition}
An abelian gerbe $\gerbe$ over $\xdown$ is a fibered category $\gerbe$ over $\thesite$ such that the following holds:
\begin{itemize}
\item $\gerbe$ is a stack in groupoids over $\thesite$.
\item For any object $U$ of $\thesite$, and $x,y \in \objects(\gerbe_{U})$, there exists a cover $\{U_i \to U\}$ of $U$ such that $x|_{U_i}, y|_{U_i}$ are isomorphic in $\gerbe_{U_i}$. 
\item For any object $U$ of $\thesite$, and $x \in \objects(\gerbe_{U})$, the sheaf of groups $\aut(x)$ on $\thesite/U$ is abelian.
\item For any object $U$ of $\thesite$, there exists a cover $\{U_i \to U\}$ of $U$ such that the fiber categories $\gerbe_{U_i}$ are non-empty. 
\end{itemize}
A morphism of abelian gerbes over $\xdown$ is a morphism of fibered categories over $\thesite$. We say $\gerbe$ is a pseudo abelian gerbe over $\xdown$ if it satisfies all the above conditions except possibly the last. Since we only consider gerbes which are abelian, we sometimes drop the adjective.  
\end{definition}
\begin{sloppypar}
Let $\gerbe$ be a gerbe on $\xdown$. Then there is an associated sheaf of abelian groups denoted by $\band(\gerbe)$ on $\thesite$ with the following property. For every object $U$ of $\thesite$, and $x \in \objects(\gerbe_{U})$, there is an isomorphism ${\band(\gerbe)|_U \to \aut(x)}$ such that for every morphism ${\psi : x \to y}$ in $\gerbe_{U}$, the following diagram commutes:
\end{sloppypar}
\[
  \begin{tikzcd}
    \band(\gerbe)(U) \arrow{d} \arrow{r}{=} & \band(\gerbe)(U) \arrow{d}  \\
    \aut(x) \arrow{r}{\alpha \to \psi \circ \alpha \circ \psi^{-1}} & \aut(y)
  \end{tikzcd}
\]
\begin{sloppypar}
  A morphism of abelian gerbes over $\xdown$ induces a group homomorphism of the associated bands. A gerbe banded by $A$ is an abelian gerbe $\gerbe$ over $X$ together with an isomorphism of sheaves ${\band(\gerbe) \to A}$. Let $\gerbe_1, \gerbe_2$ be two gerbes over $\xdown$ banded by $A_1$ and $A_2$ respectively. Suppose ${f: A_1 \to A_2}$ is a group homomorphism of sheaves. An $f\mbox{-equivariant}$ morphism ${\gerbe_1 \to \gerbe_2}$ is a morphism of gerbes $\gerbe_1 \to \gerbe_2$ such that the associated morphism of their bands fits in the following commutative diagram:
\end{sloppypar}
\[
  \begin{tikzcd}
    \band(\gerbe_1) \arrow{d} \arrow{r} & \band(\gerbe_2) \arrow{d}  \\
    A_1 \arrow{r}{f}  & A_2
  \end{tikzcd}
\]
An $A\mbox{-equivariant}$ (or just equivariant) morphism of gerbes banded by $A$ is a morphism which is equivariant with respect to the identity map of $A$. As in the case of torsors, equivariant morphisms of gerbes are isomorphisms. 
\begin{definition}\label{definition-hom-category-between-fibered-categories}
  \begin{sloppypar}
    If $Y_1, Y_2$ are categories fibered over a category $C$, then ${\hom_C(Y_1,Y_2)}$ is a category whose objects are morphisms $Y_1 \to Y_2$ of fibered categories and the morphisms are base preserving natural transformations. If $Y_1$ and $Y_2$ are fibered in groupoids over $C$, then the category $\hom_C(Y_1,Y_2)$ is a groupoid (See Chapter 3, \cite{olsson_algebraic_spaces_stacks_book}).
  \end{sloppypar}
\end{definition}
\begin{remark}\label{remark-iso-functors-induce-same-map-on-bands}
  Suppose $\gerbe_1$ and $\gerbe_2$ are abelian gerbes over $\xdown$ banded by $A_1$ and $A_2$, respectively. If $F,G: \gerbe_1 \to \gerbe_2$ are morphisms of gerbes which are isomorphic in the category $\hom_{\xdown}(\gerbe_1,\gerbe_2)$, then they induce identical map between their bands. 
\end{remark}
\begin{sloppypar}
Torsors and gerbes have functorial constructions in the sense of the following proposition:
\end{sloppypar}
\begin{proposition}\label{prop-hom-bands-gerbe-construction}
  \begin{sloppypar}
Let ${\psi: A_1 \to A_2}$ be a group homomorphism of abelian sheaves on $\xdown$. Suppose we are given an $A_1\mbox{-torsor}$ $y_1$ on $\xdown$ and an abelian gerbe $\gerbe_1$ on $\xdown$ which is banded by $A_1$. Then we have the following:
  \end{sloppypar}
  \begin{enumerate}
  \item There exists an $A_2\mbox{-torsor}$ $y_2$ and a $\psi\mbox{-equivariant}$ morphism ${f: y_1 \to y_2}$. Moreover, if $y_2'$ is another $\operatorname{A_2-torsor}$ with a $\psi\mbox{-equivariant} $ map ${f': y_1 \to y_2}$, then there is a unique $A_2\mbox{-equivariant}$ map ${h: y_2 \to y_2'}$ such that ${h \circ f = f'}$. 

\item
  \begin{sloppypar}
    There exists an abelian gerbe $\gerbe_2$ banded by $A_2$ and a $\psi\mbox{-equivariant}$ morphism of gerbes $F: \gerbe_1 \to \gerbe_2$.

    Suppose $\gerbe_2'$ is another gerbe banded by $A_2$ and admits a $\psi\mbox{-equivariant}$ morphism ${F': \gerbe_1 \to \gerbe_2'}$. Then there exists an $A_2\mbox{-equivariant}$ morphism ${H: \gerbe_2 \to \gerbe_2'}$ such that $H \circ F$ is isomorphic $F'$ in the category $\hom_{\thesite}(\gerbe_1,\gerbe_2')$. Furthermore, $H$ is unique up to an isomorphism in $\hom_{\thesite}(\gerbe_2,\gerbe_2')$.
  \end{sloppypar}
\end{enumerate}
\end{proposition}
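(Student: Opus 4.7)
The plan is to use the standard ``contracted product'' (extension of structure group) construction in both cases, and then verify the universal property. The torsor case is a straightforward sheaf-theoretic construction, while the gerbe case requires the analogous $2$-categorical machinery.

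For part (1), I would define $y_2$ as the sheafification of the presheaf $U \mapsto (A_2(U) \times y_1(U))/A_1(U)$, where $a_1 \in A_1(U)$ acts by $(a_2,y) \mapsto (a_2\psi(a_1)^{-1},\, a_1 \cdot y)$. The $A_2$-action is by left multiplication on the first factor, and $f: y_1 \to y_2$ sends $y$ to the class of $(e,y)$; this is $\psi$-equivariant by construction. That $y_2$ is an $A_2$-torsor is routine: the simple transitivity is a direct computation and local non-emptiness is inherited from $y_1$. For uniqueness, given $(y_2', f')$, the rule $[a_2,y] \mapsto a_2 \cdot f'(y)$ is well-defined precisely because $f'$ is $\psi$-equivariant, gives the required $A_2$-equivariant map $h$, and is forced by the requirement $h \circ f = f'$ together with transitivity of the $A_2$-action.

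For part (2), I would first form a prestack $\gerbe_2^{\mathrm{pre}}$ with the same objects as $\gerbe_1$ on each $U$, but with morphism sets
\[
  \hom_{\gerbe_2^{\mathrm{pre}}(U)}(x,y) \;:=\; \isom_{\gerbe_1(U)}(x,y) \times^{A_1(U)} A_2(U),
\]
using the natural $\aut_{\gerbe_1}(x) \cong A_1|_U$-action, and then stackify to obtain $\gerbe_2$. The identity-on-objects functor $F: \gerbe_1 \to \gerbe_2^{\mathrm{pre}} \hookrightarrow \gerbe_2$ is $\psi$-equivariant by construction, and $\gerbe_2$ is a gerbe banded by $A_2$: on any $U$ where $\gerbe_1$ admits a section $x$, the automorphism sheaf of $x$ in $\gerbe_2$ is exactly $A_2|_U$ by the contracted-product formula, and the non-emptiness and local equivalence of objects conditions are inherited from $\gerbe_1$. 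For the universal property, given $F': \gerbe_1 \to \gerbe_2'$, I would define $H$ on $\gerbe_2^{\mathrm{pre}}$ as $F'$ on objects and by $[\varphi,a_2] \mapsto a_2 \cdot F'(\varphi)$ on morphisms, where $a_2$ acts via the $A_2$-banding on $\gerbe_2'$; well-definedness is forced by the $\psi$-equivariance of $F'$ on bands. Stackification extends this to $H: \gerbe_2 \to \gerbe_2'$, and the natural isomorphism $H \circ F \simeq F'$ comes directly from $F$ being identity-on-objects at the prestack level.

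The main obstacle, and the step requiring the most care, is uniqueness of $H$ up to isomorphism in $\hom_\thesite(\gerbe_2,\gerbe_2')$. Given two candidates $H,H'$, the data $H \circ F \simeq F' \simeq H' \circ F$ pins down an isomorphism of the two functors on the image of $F$; by Remark \ref{remark-iso-functors-induce-same-map-on-bands} both induce the same map on bands, and $A_2$-equivariance on morphisms determines the $2$-isomorphism on all prestack-level morphisms. Descent then glues this collection of local $2$-isomorphisms to a global one, since any object of $\gerbe_2$ is locally in the image of $F$. The only real work beyond classical contracted products is this bookkeeping at the $2$-categorical level; no new conceptual input is needed.
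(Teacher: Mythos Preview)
The paper does not actually prove this proposition: it is stated in the Preliminaries section (Section~\ref{section-Preliminaries}) without proof and immediately used to define the group law on isomorphism classes of torsors and gerbes, so there is no ``paper's own proof'' to compare against. Your contracted-product construction is the standard argument and is correct; in particular the torsor case is exactly the usual extension-of-structure-group, and your gerbe construction (change the automorphism sheaves via $-\times^{A_1}A_2$ and stackify) together with the descent argument for uniqueness of $H$ is the expected $2$-categorical analogue.
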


\begin{sloppypar}
The above proposition also allows to define an abelian group structure on the equivariant isomorphism classes of $A\mbox{-torsors}$ and gerbes banded by $A$. Suppose $y$ and $y'$ are $A\mbox{-torsor}$ on $\xdown$. Then the product sheaf $y \times y'$ is an $(A \times A)\mbox{-torsors}$ on $\xdown$. We have the map ${ A \times A \to A}$ given by the group law. By the above proposition, there exists a unique (up to an isomorphism) $A\mbox{-torsor}$ $y \otimes y'$ with a morphism ${y \times y' \to y \otimes y'}$ that is equivariant with respect to the group law. It is easy to verify that the operation thus constructed defines an abelian group structure on the isomorphism classes of $A\mbox{-torsors}$. A similar construction can be made for gerbes banded by $A$.
\end{sloppypar}
\begin{theorem}\label{theorem-cohomologies-and-torsors-gerbes}
The sheaf cohomology group $\cohomology{1}{\xdown}{A}$ is isomorphic to the group of $A\mbox{-equivariant}$ isomorphism classes of $A\mbox{-torsors}$. Similarly, the sheaf cohomology group $\cohomology{2}{\xdown}{A}$ is isomorphic to the group of $A\mbox{-equivariant}$ isomorphism classes of abelian gerbes banded by $A$.
\end{theorem}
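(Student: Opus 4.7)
The plan is to establish both bijections via \v{C}ech cohomology, using that $\check{H}^{i}(\xdown, A) \cong H^{i}(\xdown, A)$ holds for $i = 1$ on ordinary covers, and for $i = 2$ after passing to hypercovers (Verdier's theorem). In each case I would (a) construct a cocycle from a torsor or gerbe using a trivializing cover, (b) construct a torsor or gerbe from a cocycle by gluing, and (c) verify that the two operations are mutually inverse and carry the tensor product group structure on the left-hand side (defined after Proposition \ref{prop-hom-bands-gerbe-construction}) to the addition of cocycles on the right-hand side.

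For the first statement, given an $A$-torsor $y$, I would choose a cover $\{U_{i} \to \xdown\}$ over which $y$ is trivialized and pick sections $s_{i} \in y(U_{i})$. Simple transitivity supplies a unique $g_{ij} \in A(U_{i} \times_{\xdown} U_{j})$ with $s_{j}|_{U_{ij}} = g_{ij} \cdot s_{i}|_{U_{ij}}$; the cocycle identity follows from uniqueness, changing the sections alters $\{g_{ij}\}$ by a coboundary, and refinement of covers is compatible. Conversely, a \v{C}ech $1$-cocycle lets one glue together the trivial torsors $A|_{U_{i}}$ to produce an $A$-torsor, and the two assignments are mutually inverse. Matching the tensor product of torsors with the addition of cocycles on both sides gives the group isomorphism.

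For the second statement, given a gerbe $\gerbe$ banded by $A$, I would pick a cover with $\objects(\gerbe_{U_{i}})$ non-empty, choose local objects $x_{i}$, refine the cover so that isomorphisms $\phi_{ij}: x_{i}|_{U_{ij}} \to x_{j}|_{U_{ij}}$ exist, and set $\alpha_{ijk} := \phi_{ik}^{-1} \circ \phi_{jk} \circ \phi_{ij}$, regarded as an element of $A(U_{ijk})$ via the band isomorphism $\band(\gerbe) \cong A$. Associativity of composition forces the $2$-cocycle condition, and altering any of the $x_{i}$ or $\phi_{ij}$ changes $\alpha$ by a coboundary. Conversely, a $2$-cocycle $\alpha$ produces a gerbe by gluing the trivial abelian gerbes on the $U_{i}$ along $\alpha$; one verifies the four conditions of Definition \ref{definition-abelian-gerbe-definition} for the resulting fibered category. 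Compatibility with the tensor product from Proposition \ref{prop-hom-bands-gerbe-construction} then gives the group isomorphism.

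The main obstacle is the passage from $\check{H}^{2}$ to the full sheaf $H^{2}$: not every class in $H^{2}(\xdown, A)$ need arise from a $2$-cocycle on an ordinary cover. This is handled by running the entire argument on hypercovers rather than covers, using Verdier's theorem to identify $H^{2}(\xdown, A)$ with the colimit of $\check{H}^{2}$ taken over all hypercovers. The gluing constructions extend to hypercovers because gerbes, being stacks, satisfy descent along them, and the cocycle extraction extends because isomorphisms between the $x_{i}$ that exist on a refinement already produce a valid element of the degree-two component of a hypercover. Once this is set up, both surjectivity of the cocycle-to-gerbe assignment and injectivity of the gerbe-to-cocycle assignment follow exactly as in the $H^{1}$ case.
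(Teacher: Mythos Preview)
The paper does not prove this theorem: it appears at the end of the Preliminaries section with no accompanying proof environment, and is invoked later only as a dictionary between cohomology and torsors/gerbes. It is treated as a standard classical fact.

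Your proposed argument is the standard one and is correct in outline. The $H^{1}$ case via \v{C}ech cocycles on ordinary covers is routine, and you correctly identified that $\check{H}^{1} \cong H^{1}$ holds in full generality. For $H^{2}$ you also correctly flagged the real issue: ordinary \v{C}ech cohomology need not compute $H^{2}$, so extracting a cocycle from a gerbe by choosing local objects $x_{i}$ and then refining to find isomorphisms $\phi_{ij}$ naturally produces data on a hypercover rather than on an ordinary cover, and Verdier's theorem (\v{C}ech cohomology over hypercovers computes derived-functor cohomology) closes the gap. One small point worth making explicit in an actual write-up: when you ``refine the cover so that isomorphisms $\phi_{ij}$ exist,'' the refinement lives over the double intersections and need not be pulled back from a refinement of the original cover of $\xdown$; this is exactly why a genuine hypercover, rather than the \v{C}ech nerve of a single cover, is required. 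With that caveat, your sketch would expand into a complete proof along the lines of Giraud's original treatment.
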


\section{More on Gerbes}
\begin{sloppypar}
The category of gerbes over $\xdown$ is a $2\mbox{-category}$ where $2\mbox{-morphisms}$ are as in Definition \ref{definition-hom-category-between-fibered-categories}. Suppose ${F,H: \gerbe_1 \to \gerbe_2}$ are morphisms of gerbes over $\xdown$. We say $F$ and $H$ are $2\mbox{-isomorphic}$ if they are isomorphic objects in the category $\hom_{\thesite}(\gerbe_1,\gerbe_2)$. 

The category $\torsorcat{A}$ is an abelian gerbe over $\xdown$ whose band is isomorphic to $A$. We refer to it as the trivial gerbe banded by $A$. We fix a choice of pullbacks for the fibered category  $\torsorcat{A}$ over $\thesite$. Suppose $y$ is an $A\mbox{-torsor}$ on $\xdown$ and $(U,z)$ is an object of $\torsorcat{A}$. Then the sheaf $\isom(y|_{U},z)$ is a torsor over the group sheaf ${\isom(y|_U,y|_U) \cong A|_{U}}$. The assignment ${(U,z) \to (U,\isom(y|_{U},z))}$ defines an $A\mbox{-equivariant}$ morphism of gerbes banded by $A$. We denote this morphism by $\isom(y,-)$. More generally, if $\gerbe$ is a gerbe banded by $A$ that admits a global section $y$, then the isom functor as defined above gives an $A\mbox{-equivariant}$ isomorphism of gerbes ${\gerbe \to \torsorcat{A}}$. Thus, as in the case of torsors, a gerbe which admits a global section is isomorphic to the trivial gerbe. 
\end{sloppypar}
\begin{remark}\label{remark_trivialzation_given_by_global_section_for_gerbe}
Any $A\mbox{-equivariant}$ morphism of gerbes ${\gerbe \to \torsorcat{A}}$ is $2\mbox{-isomorphic}$ to $\isom(y,-)$ for some global section $y$ of $\gerbe$.
\end{remark}

\begin{remark}\label{remark-composition-of-morphisms-and-addition-law-on-torsors}
  \begin{sloppypar}
    If $y_1$ and $y_2$ are two $A\mbox{-torsors}$ on $\xdown$ then the composite morphism 
$$\isom(y_1,-) \circ \isom(y_2,-) : \torsorcat{A} \to \torsorcat{A} $$
is $2\mbox{-isomorphic}$ to the morphism ${\isom(y_1 \otimes y_2,-)}$.
\end{sloppypar}
\end{remark}
\begin{lemma}\label{lemma-automorpism-of-functor-is-global-sections}
Let $\gerbe_1$ and $\gerbe_2$ be gerbes over $\xdown$ banded by $A_1$ and $A_2$, respectively. Suppose we have a morphism $ F: \gerbe_1 \to \gerbe_2 $
of gerbes. Then the automorphism group of the object $F$ in the category $\hom_{\xdown_{\tau}}(\gerbe_1,\gerbe_2)$ is isomorphic to the global sections $A_2(\xdown)$ of the band of $\gerbe_2$. Suppose we use overline to denote this isomorphism. Then for any $2\mbox{-morphisms}$ ${\alpha: F \to F}$ and ${\gamma: G \to F}$, we have $ \overline{\alpha} = \overline{\gamma \circ \alpha \circ \gamma^{-1}}$.
\end{lemma}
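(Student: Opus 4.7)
The plan is to identify a 2-automorphism $\alpha : F \Rightarrow F$ with a global section of $A_2$ by inspecting its components object by object. For any object $x$ of $\gerbe_1$ lying over $U \in \thesite$, the component $\alpha_x$ is an automorphism of $F(x)$ in $\gerbe_2$. Via the band isomorphism $A_2|_U \cong \aut(F(x))$ we obtain an element $\overline{\alpha}_x \in A_2(U)$. First I would show that this element depends only on $U$, not on the chosen $x$: if $y$ is another object over $U$ and $\phi : x \to y$ an isomorphism in $\gerbe_1(U)$, naturality of $\alpha$ forces $\alpha_y = F(\phi)\,\alpha_x\,F(\phi)^{-1}$, and by the defining property of the band of $\gerbe_2$ (conjugation by an isomorphism induces the identity on the band), $\overline{\alpha}_x = \overline{\alpha}_y$. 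When two objects over $U$ are only locally isomorphic (as is always the case in a gerbe), this equality holds on a cover, hence on $U$ by the sheaf property of $A_2$.

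Next I would produce a global section. Since $\gerbe_1$ is a gerbe, choose a cover $\{U_i \to \xdown\}$ with objects $x_i \in \gerbe_1(U_i)$; this yields sections $\overline{\alpha}_{x_i} \in A_2(U_i)$. On each double overlap $U_i \times_{\xdown} U_j$, after refining so that $x_i$ and $x_j$ become isomorphic, the previous step gives $\overline{\alpha}_{x_i}|_{U_{ij}} = \overline{\alpha}_{x_j}|_{U_{ij}}$. These sections therefore glue to a global section $\overline{\alpha} \in A_2(\xdown)$, and independence from the cover and the choice of $x_i$ follows from the same argument.

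For the inverse map, given $s \in A_2(\xdown)$ I would define $\alpha^{s}$ by letting $\alpha^{s}_x \in \aut(F(x))$ be the element corresponding to $s|_U$ under the band isomorphism. Naturality along any morphism $\phi : x \to y$ in $\gerbe_1$ reduces once again to the band's conjugation-invariance: both $\alpha^{s}_y$ and $F(\phi)\,\alpha^{s}_x\,F(\phi)^{-1}$ correspond to $s|_U$, hence coincide. A direct check shows the two constructions are mutually inverse, and because composition of 2-automorphisms at each $x$ matches multiplication in $\aut(F(x)) \cong A_2|_U$, the bijection is a group isomorphism.

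Finally, for the conjugation statement, the very same principle settles it: conjugating $\alpha$ by a 2-morphism $\gamma$ conjugates each $\alpha_x$ by the isomorphism $\gamma_x$ in $\gerbe_2$, which leaves the associated element of $A_2(U)$ unchanged by the band property. Thus $\overline{\gamma \circ \alpha \circ \gamma^{-1}} = \overline{\alpha}$. The main subtle point in the whole argument is verifying that $\overline{\alpha}_x$ is independent of $x$ and compatible with restriction, which is exactly where the gerbe property of $\gerbe_1$ (local non-emptiness and local isomorphism of any two objects) does the essential work; once that is in place, the remaining steps are routine bookkeeping.
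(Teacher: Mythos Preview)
Your argument is correct and follows essentially the same route as the paper: extract local sections of $A_2$ from the components $\alpha_{x_i}$ via the band isomorphism, use naturality together with the conjugation-invariance built into the definition of the band to show these agree on overlaps (after refining so the local objects become isomorphic), and glue by the sheaf property. Your treatment is in fact slightly more thorough, since you spell out the inverse construction and the group-homomorphism check, whereas the paper leaves these implicit.
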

\begin{proof}
  \begin{sloppypar}
    Suppose ${\{U_i \to \xdown \}_{i \in I}}$ is a cover of $\xdown$ such that the fiber categories ${{\gerbe_1}_{U_i}}$ are non-empty. Choose ${x_i \in \objects({\gerbe_1}_{U_i})}$. A base preserving natural transformation from $F$ to itself gives an automorphism $F(x_i) \to F(x_i)$ in the fiber category ${\gerbe_2}_{U_i}$ for each $i \in I$. Since $\aut(F(x_i))|_{U_i}$ is isomorphic to ${A_2}|_{U_i}$, we get a section $a_i$ of the sheaf $A_2$ over $U_i$. For a pair of indices $i,j \in I$, there exists a cover ${\{ U_k \to U_i \times_{\xdown} U_J \}_{k \in I_{i,j}}}$ of $U_i \times_{\xdown} U_j$ where the objects $x_i, x_j$ are isomorphic. For $k \in I_{i,j}$, after choosing an isomorphism $\beta: x_i|_{U_k} \to x_j|_{U_k}$ in the category ${\gerbe_1}_{U_k}$, we get the following commutative diagram of arrows in the category ${\gerbe_2}_{U_k}$:
  \end{sloppypar}
  \[
  \begin{tikzcd}
    F(x_i|_{U_k}) \arrow{r}{a_i|_{U_k}} \arrow{d}[swap]{F(\beta)} & F(x_i|_{U_k}) \arrow{d}{F(\beta)} \\
    F(x_j|_{U_k}) \arrow{r}{a_j|_{U_k}}  & F(x_j|_{U_k})
  \end{tikzcd}
\]
\begin{sloppypar}
  Therefore we conclude that $a_i|_{U_k} = a_j|_{U_k}$ for each $k \in I_{i,j}$. By the sheaf condition for the cover ${\{ U_k \to U_i \times_{\xdown} U_J \}_{k \in I_{i,j}}}$ we see that ${a_i|_{U_i \times_{\xdown} U_J} = a_j|_{U_i \times_{\xdown} U_J}}$ for every $i,j \in I$. Using the sheaf condition again but for the cover ${\{U_i \to \xdown \}_{i \in I}}$ we conclude that there exists a global section $a \in A_2(\xdown)$ whose restriction to $U_i$ is $a_i$ for each $i \in I$. In other words, a base preserving transformation from $F$ to itself is given by the action of a global section of $A_2$. The second part of the lemma follows easily from this observation.
\end{sloppypar}
\end{proof}
\begin{lemma}\label{lemma-G-action-on-2-morphisms-same-as-on-global-sections}
Let $\gerbe_1,\gerbe_2,$ and $\gerbe_3$ be abelian gerbes on $\xdown$ banded by $A_1,A_2,$ and $A_3$ respectively. Let ${G: \gerbe_1 \to \gerbe_2}$ and ${F: \gerbe_2 \to \gerbe_3}$ be morphisms of gerbes. Let ${\delta_F: A_2 \to A_3}$ denote the map induced by $F$ on the bands of the gerbes. Let 
${\alpha: F \to F}$ and ${\beta: G \to G}$
be 2-morphisms. Then with the notation as in Lemma  \ref{lemma-automorpism-of-functor-is-global-sections}, we have
${\overline{\alpha \star \beta} = \delta_F(\overline{\beta}) \cdot \overline{\alpha}}$.
\end{lemma}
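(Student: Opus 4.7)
The plan is to unwind the definition of the Godement (horizontal) product $\alpha \star \beta$ locally on $\xdown$ and then translate the formula into the global-section language supplied by Lemma \ref{lemma-automorpism-of-functor-is-global-sections}. Pick a cover $\{U_i \to \xdown\}_{i\in I}$ over which $\gerbe_1$ has objects $x_i \in \objects({\gerbe_1}_{U_i})$. Then $\beta$ restricts on each $x_i$ to an automorphism $\beta_{x_i} \in \aut(G(x_i))$, and $\alpha$ restricts on each $G(x_i)$ to an automorphism $\alpha_{G(x_i)} \in \aut(F(G(x_i)))$. By the standard definition of horizontal composition of natural transformations, the component of $\alpha \star \beta$ at $x_i$ is
\[
(\alpha \star \beta)_{x_i} \;=\; \alpha_{G(x_i)} \circ F(\beta_{x_i})
\]
in $\aut(F(G(x_i)))$.

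Now I would invoke Lemma \ref{lemma-automorpism-of-functor-is-global-sections} three times. Applied to $F \circ G$, it identifies $\overline{\alpha \star \beta}$ with a global section of $A_3$ whose restriction to $U_i$ corresponds under $\band(\gerbe_3)|_{U_i} \xrightarrow{\sim} \aut(F(G(x_i)))$ to $(\alpha\star\beta)_{x_i}$. Applied to $F$, it does the same for $\overline{\alpha}$ with $\alpha_{G(x_i)}$. Applied to $G$, it identifies $\overline{\beta}|_{U_i}$ with $\beta_{x_i}$ under $\band(\gerbe_2)|_{U_i} \xrightarrow{\sim} \aut(G(x_i))$. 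The key remaining point is to match $F(\beta_{x_i})$ with $\delta_F(\overline{\beta})|_{U_i}$: by the very definition of the map $\delta_F$ that $F$ induces on bands, the diagram
\[
  \begin{tikzcd}
    A_2|_{U_i} \arrow{d}[swap]{\sim} \arrow{r}{\delta_F|_{U_i}} & A_3|_{U_i} \arrow{d}{\sim} \\
    \aut(G(x_i)) \arrow{r}{F} & \aut(F(G(x_i)))
  \end{tikzcd}
\]
commutes, so $F(\beta_{x_i})$ corresponds to $\delta_F(\overline{\beta})|_{U_i}$.

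Combining these identifications with the formula $(\alpha \star \beta)_{x_i} = \alpha_{G(x_i)} \circ F(\beta_{x_i})$ and using that $A_3$ is abelian (so the order of the two factors is immaterial), I obtain
\[
\overline{\alpha \star \beta}\big|_{U_i} \;=\; \overline{\alpha}\big|_{U_i} \cdot \delta_F(\overline{\beta})\big|_{U_i} \;=\; \delta_F(\overline{\beta})\big|_{U_i} \cdot \overline{\alpha}\big|_{U_i}.
\]
Since this holds on every member of a cover of $\xdown$, the sheaf axiom for $A_3$ gives the claimed global identity $\overline{\alpha \star \beta} = \delta_F(\overline{\beta}) \cdot \overline{\alpha}$.

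I expect the only subtle point to be checking that the local identifications are independent of the chosen objects $x_i$ and the chosen local trivializations $\band(\gerbe_k)|_{U_i} \xrightarrow{\sim} \aut(-)$; this is exactly what the compatibility diagram in the definition of the band, together with the second assertion of Lemma \ref{lemma-automorpism-of-functor-is-global-sections}, guarantees. Once that compatibility is in hand, the argument is essentially a direct unwinding of horizontal composition and the definition of the induced map on bands.
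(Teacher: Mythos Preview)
Your proof is correct and follows essentially the same approach as the paper's. The paper's argument is a terse version of yours: it writes the local formula $(\alpha\star\beta)_x = \alpha \circ F(\beta)$ as a single commutative triangle and then appeals to the proof of Lemma~\ref{lemma-automorpism-of-functor-is-global-sections} to read off the global section $\delta_F(\overline{\beta})\cdot\overline{\alpha}$; you have simply unpacked the identifications (in particular the role of $\delta_F$ and the sheaf condition) more explicitly.
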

\begin{proof}
  \begin{sloppypar}
    For $U \in \objects(\thesite)$ and $x \in \objects({\gerbe_1}_{U_i})$, the $2\mbox{-morphism}$ ${ \alpha \star \beta: F \circ G \to F \circ G }$ is locally given by the commutative diagram below:  
  \end{sloppypar}
  \[
  \begin{tikzcd}
    F(G(x)) \arrow{r}{F(\beta)} \arrow{dr}[swap]{\alpha * \beta} & F(G(x)) \arrow{d}{\alpha} \\
    \empty & F(G(x))
  \end{tikzcd}
\]
It follows from the proof of Lemma \ref{lemma-automorpism-of-functor-is-global-sections} that this corresponds to the element $ \delta_F(\overline{\beta}) \cdot \overline{\alpha}$.
\end{proof}

\section{Group actions}
{
\newcommand{\trivialtorsor}{y^{triv}}

We use additive notation for the group structure of $A$. Let $G$ be a finite group. By a (left) action of the group $G$ on $A$ we mean a group homomorphism ${ G^{opp} \to \isom_{Ab(\thesite)}(A,A)}$.
\begin{definition}\label{def-G-action-on-A-and-invariants}
Suppose the group $G$ acts on the abelian sheaf $A$ by automorphisms ${\{\rho_g\}_{g \in G}}$. The invariants of $A$ is the sheaf defined by the assignment ${ U \to A(U)^{G}}$. We denote this sheaf by $\invariants$. We have a map ${i: A^G \to A }$ such that for every $g \in G$, we have ${ \rho_{g} \circ i = i }$.
\end{definition}
\begin{definition}\label{def-group-cohomlogies-vanish-locally}
  \begin{sloppypar}
    Suppose we have an action of the group $G$ on $A$. For an index $j > 0$ we say that ``the $j^{th}$ group cohomology locally vanishes'' if the following holds: For every object $U$ of $\thesite$, there exists a cover ${ \{U_i \to U\}_{i \in I}}$ such that for all $i \in I$, the group cohomology ${\cohomology{j}{G}{A(U_i)}}$ is zero. 

    We say that ``the $j^{th}$ group cohomology classes locally vanish'' if the following holds: For every object $U$ of $\thesite$, and a group cohomology class $\alpha \in \cohomology{j}{G}{A(U)}$, there exists a cover ${\{U_i \to U\}_{i \in I}}$ such that for all $i \in I$, the restriction of $\alpha$ to ${\cohomology{j}{G}{A(U_i)}}$ is zero. It is clear the latter condition is weaker than the former.
  \end{sloppypar}
\end{definition}
\begin{definition}\label{def-induced-torsor-and-gerbe}
Suppose we have a group action by $G$ on the abelian sheaf $A$. Let $\invariants \xrightarrow{i} A$ denote the sheaf of invariants. We say an $A\mbox{-torsor}$ $y$ is induced from the invariants if there exists an $\invariants\mbox{-torsor}$ $\overline{y}$ and an $i\mbox{-equivariant}$ morphism $ \overline{y} \to y$.

Suppose $\gerbe$ is an abelian gerbe banded by $A$. We say $\gerbe$ is induced from the invariants if there exists a gerbe $\overline{\gerbe}$ banded by $\invariants$ and an $i\mbox{-equivariant}$ morphism of gerbes 
${\overline{\gerbe} \to \gerbe}$.
\end{definition}
\begin{definition}\label{definition-G-action-on-torsor}
Let $y$ be an $A\mbox{-torsor}$ on $\xdown$. By an action of $G$ on $y$ we mean an assignment of a morphism ${\lambda_g: y \to y }$ for every $g \in G$ 
which is equivariant with respect to an automorphism ${\phi_g: A \to A}$ of $A$. Moreover, we require that the automorphisms $\{\phi_g\}_{g \in G}$ define a (left) $G\mbox{-action}$ on $A$. We refer to the action on $A$ as the underlying action. Given the underlying action on $A$, a lift of the action on $y$ is a $G\mbox{-action}$ on $y$ whose underlying action on $A$ is the given action.
\end{definition}
\begin{definition}\label{def-G-equivariant-morphism-of-torsors}
  \begin{sloppypar}
    Let ${f: A_1 \to A_2}$ be a morphism of abelian sheaves on $\xdown$. Let $y_1$ (resp. $y_2$) be an $A_1\mbox{-torsor}$ (resp. $A_2\mbox{-torsor}$) on $\xdown$. Suppose we have an action of $G$ on $y_1$ given by morphisms ${\{\lambda_g^1\}_{g \in G}}$ and also on $y_2$ given by morphisms ${\{\lambda_g^2\}_{g \in G}}$. By a $G\mbox{-equivariant}$ morphism ${\lambda: y_1 \to y_2}$ we mean a map of torsors ${\lambda: y_1 \to y_2}$ such that it is $f\mbox{-equivariant}$ and for $g \in G$, the following diagram commutes:
  \end{sloppypar}
  \[
  \begin{tikzcd}
    y_1 \arrow{d}{\lambda_g^1} \arrow{r}{\lambda} & y_2 \arrow{d}{\lambda_g^2} \\
    y_1 \arrow{r}{\lambda} & y_2
  \end{tikzcd}
\]
It is easy to see that if such a $G\mbox{-equivariant}$ morphisms exists, then $f$ is a $G\mbox{-equivariant}$ map of sheaves with respect to the underlying actions on $A_1$ and $A_2$. 
\end{definition}

\begin{definition}\label{def-2-cochain-for-torsor-action}
Suppose $y$ is an $A\mbox{-torsor}$ with an action of $G$ given by morphisms $\{\lambda_g\}_{g \in G}$ as in Definition \ref{definition-G-action-on-torsor}. Then for  $g,h \in G$, we denote by $\chi_{\lambda}(g,h) \in A(\xdown)$ the scalar that corresponds to the unique $A\mbox{-equivariant}$ map which makes the diagram below commute. 
\[
  \begin{tikzcd}
    y \arrow{d}{\lambda_h} \arrow{r}{\lambda_{gh}} & y \arrow{d}{\chi_{\lambda}(g,h)}  \\
    y \arrow{r}{\lambda_g} & y 
  \end{tikzcd}
\]
We regard $\chi_{\lambda}$ as a $2\mbox{-cochain}$ with values in $A(\xdown)$.
\end{definition}
We want to prove that in the situation of Definition \ref{def-2-cochain-for-torsor-action}, the cochain $\chi_\lambda$ is a cocycle with respect to the underlying action on $A$. Moreover, its cohomology class in $\cohomology{2}{G}{A(\xdown)}$ is independent of the chosen lift of the $G\mbox{-action}$ on $y$.
\begin{lemma}\label{lemma-change-in-lifted-action-on-torsors-coboundary}
  \begin{sloppypar}
    Let $y$ be an $A\mbox{-torsor}$. Suppose we have two $G\mbox{-actions}$ on $y$ given by morphisms ${\{\lambda_g^1\}_{g \in G}}$ and ${\{\lambda_g^2\}_{g \in G}}$ such that their underlying action on $A$ is identical. Then the two cochains ${\chi_{\lambda^1}}$ and ${\chi_{\lambda^2}}$ differ by a coboundary. Conversely, given any $2\mbox{-cochain}$ $\chi$ which is cohomologous to $\chi_{\lambda^1}$, there exists a lift of the action of $G$ such that the corresponding cochain is $\chi$. 
  \end{sloppypar}
\end{lemma}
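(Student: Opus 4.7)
The plan is to parametrize the difference between two lifts by a $1$-cochain with values in $A(\xdown)$, and then to compute explicitly how the associated $2$-cochains differ.

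First, since the two actions $\{\lambda^1_g\}$ and $\{\lambda^2_g\}$ share the same underlying action $\{\phi_g\}$ on $A$, the composite $\lambda^2_g \circ (\lambda^1_g)^{-1}$ is $\phi_g \phi_g^{-1} = \mathrm{id}_A$-equivariant, hence an $A$-equivariant automorphism of the torsor $y$. By the same principle as in Definition \ref{def-2-cochain-for-torsor-action}, such an automorphism is given by a unique global section $c(g) \in A(\xdown)$, so that $\lambda^2_g = c(g)\cdot \lambda^1_g$. This defines a $1$-cochain $c \colon G \to A(\xdown)$.

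Next, I would expand $\lambda^2_g \circ \lambda^2_h = \chi_{\lambda^2}(g,h)\cdot \lambda^2_{gh}$ using $\lambda^i_g = c(g)\cdot \lambda^1_g$. The only nontrivial manipulation is to commute the scalar $c(h)$ past $\lambda^1_g$: by $\phi_g$-equivariance of $\lambda^1_g$, we have $\lambda^1_g \circ (c(h)\cdot) = (\phi_g(c(h))\cdot)\circ \lambda^1_g$. Substituting and using $\lambda^1_g \circ \lambda^1_h = \chi_{\lambda^1}(g,h)\cdot \lambda^1_{gh}$ together with $\lambda^1_{gh} = -c(gh)\cdot \lambda^2_{gh}$, one reads off
\[
\chi_{\lambda^2}(g,h) \;=\; \chi_{\lambda^1}(g,h) \;+\; \phi_g(c(h)) \;-\; c(gh) \;+\; c(g),
\]
which is exactly $\chi_{\lambda^1} + dc$ in the standard complex computing $\cohomology{*}{G}{A(\xdown)}$ with the $G$-module structure coming from the underlying action. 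This proves the first assertion.

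For the converse, given any $1$-cochain $c\colon G \to A(\xdown)$, I would define $\lambda^2_g := c(g) \cdot \lambda^1_g$. Each $\lambda^2_g$ is still $\phi_g$-equivariant (scaling by a global section of $A$ commutes with the $A$-action up to the identity), so $\{\lambda^2_g\}$ is a lift of the $G$-action with the same underlying action on $A$ (note that Definition \ref{definition-G-action-on-torsor} does not impose a composition law on the $\lambda_g$, so no further verification is needed). By the computation above, $\chi_{\lambda^2} = \chi_{\lambda^1} + dc$, so every cocycle cohomologous to $\chi_{\lambda^1}$ arises from an appropriate lift. The main potential pitfall, and where I would be most careful, is matching conventions: the $G$-module structure on $A(\xdown)$ used in forming the coboundary $dc$ must be the one induced by $\{\phi_g\}$, and the sign/order conventions in Definition \ref{def-2-cochain-for-torsor-action} have to be tracked consistently through the $\phi_g$-equivariance step so that the expression $\phi_g(c(h)) - c(gh) + c(g)$ emerges with the correct $G$-twist rather than its inverse.
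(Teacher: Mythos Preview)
Your proof is correct and follows essentially the same approach as the paper: the paper also writes $\lambda^2_g = a_g \circ \lambda^1_g$ for some $a_g \in A(\xdown)$ (invoking Proposition~\ref{prop-hom-bands-gerbe-construction} rather than your direct equivariance argument), states that the two cochains differ by $(g \cdot a_h) - a_{gh} + a_g$, and dispatches the converse in one line by ``suitably altering the action.'' Your version is more explicit in carrying out the equivariance computation and in noting that Definition~\ref{definition-G-action-on-torsor} imposes no composition law on the $\lambda_g$, which is exactly why the converse requires no further checking.
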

\begin{proof}
  \begin{sloppypar}
    It follows from Proposition \ref{prop-hom-bands-gerbe-construction} that for $g \in G$, there exists an $A\mbox{-equivariant}$ map given by $a_g \in A(\xdown)$ such that $ \lambda^2_g = a_g \circ \lambda^1_g $. Then for $g,h \in G$, we see that the ${\chi_{\lambda^1}(g,h)}$ and ${\chi_{\lambda^2}(g,h)}$ differ by  ${(g \cdot a_h)  - a_{gh} + a_g }$ which is the coboundary of the $1\mbox{-cochain}$ $\{a_g\}_{g \in G}$. The second part of the lemma follows easily by suitably altering the action on $y$.
  \end{sloppypar}
\end{proof}
\begin{lemma}\label{lemma-functoriality-of-2-cochain-for-torsors}
  \begin{sloppypar}
    Suppose we have a $G\mbox{-equivariant}$ morphism of torsors ${\lambda: y_1 \to y_2 }$ as in Definition \ref{def-G-equivariant-morphism-of-torsors}. Then we have $ \chi_{\lambda^2} = f \circ \chi_{\lambda^1}$.
  \end{sloppypar}
\end{lemma}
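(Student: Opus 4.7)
The plan is to deduce the identity by a diagram chase that uses three ingredients: the defining equations of $\chi_{\lambda^1}$ and $\chi_{\lambda^2}$, the $G$-equivariance of $\lambda$ (i.e.\ $\lambda \circ \lambda^1_g = \lambda^2_g \circ \lambda$ for every $g\in G$), and the $f$-equivariance of $\lambda$ (i.e.\ the scalar action of $a \in A_1(\xdown)$ on $y_1$ followed by $\lambda$ equals the action of $f(a) \in A_2(\xdown)$ on $y_2$ preceded by $\lambda$).

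First I would write the two defining identities in the form
$\lambda^1_g \circ \lambda^1_h = \chi_{\lambda^1}(g,h) \circ \lambda^1_{gh}$ on $y_1$ and
$\lambda^2_g \circ \lambda^2_h = \chi_{\lambda^2}(g,h) \circ \lambda^2_{gh}$ on $y_2$,
where the elements $\chi_{\lambda^i}(g,h)$ are regarded as $A_i$-equivariant automorphisms of $y_i$ via the torsor action. Then I would compose the first identity on the left with $\lambda$ and move $\lambda$ rightwards past $\lambda^1_g$ and $\lambda^1_h$ using $G$-equivariance, obtaining
\[
\lambda^2_g \circ \lambda^2_h \circ \lambda \;=\; \lambda \circ \chi_{\lambda^1}(g,h) \circ \lambda^1_{gh}.
\]

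Next I would apply the $f$-equivariance of $\lambda$ to commute $\lambda$ past the scalar $\chi_{\lambda^1}(g,h)$, replacing it by $f\bigl(\chi_{\lambda^1}(g,h)\bigr)$ acting on $y_2$, and then use $G$-equivariance once more to rewrite $\lambda \circ \lambda^1_{gh}$ as $\lambda^2_{gh} \circ \lambda$. On the other hand, composing the defining identity for $\chi_{\lambda^2}$ on the right with $\lambda$ gives an expression for the same left-hand side as $\chi_{\lambda^2}(g,h) \circ \lambda^2_{gh} \circ \lambda$. Equating the two expressions produces
\[
\chi_{\lambda^2}(g,h) \circ \lambda^2_{gh} \circ \lambda \;=\; f\bigl(\chi_{\lambda^1}(g,h)\bigr) \circ \lambda^2_{gh} \circ \lambda.
\]

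Since $\lambda^2_{gh}$ is an isomorphism of torsors, it cancels. What remains is to cancel $\lambda$: both $\chi_{\lambda^2}(g,h)$ and $f(\chi_{\lambda^1}(g,h))$ are $A_2$-equivariant automorphisms of $y_2$ given by global sections of $A_2$, and two such automorphisms are equal as soon as they agree after precomposition with $\lambda$ locally on any cover where $y_1$ admits sections (since $\lambda$ then carries those sections to sections of $y_2$ and the scalar attached to an $A_2$-equivariant endomorphism is read off by its action on any section). The mild obstacle here is making this final cancellation rigorous; I expect to do it by choosing a cover $\{U_i \to \xdown\}$ on which $y_1(U_i)$ is non-empty, restricting the identity above to such a $U_i$, and invoking the simply transitive action of $A_2$ on $y_2(U_i)$ to conclude that the two scalars in $A_2(U_i)$ coincide. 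The sheaf axiom then promotes the local equality to the asserted equality in $A_2(\xdown)$.
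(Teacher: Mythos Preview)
Your argument is correct and follows essentially the same diagram chase as the paper. The only cosmetic difference is the order of operations: the paper first derives $\lambda \circ \chi_{\lambda^1}(g,h) \circ \lambda^1_{gh} = \chi_{\lambda^2}(g,h) \circ \lambda \circ \lambda^1_{gh}$, cancels the isomorphism $\lambda^1_{gh}$ to obtain the commutative square $\lambda \circ \chi_{\lambda^1}(g,h) = \chi_{\lambda^2}(g,h) \circ \lambda$, and then invokes $f$-equivariance to read off $f(\chi_{\lambda^1}(g,h)) = \chi_{\lambda^2}(g,h)$; you instead apply $f$-equivariance first and cancel $\lambda^2_{gh}$, arriving at the same equality $\chi_{\lambda^2}(g,h)\circ\lambda = f(\chi_{\lambda^1}(g,h))\circ\lambda$. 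Your explicit justification of the final cancellation of $\lambda$ via local sections and the sheaf axiom is a step the paper leaves implicit, so if anything your write-up is more careful at that point.
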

\begin{proof}
For $g,h \in G$, we have 
\begin{align*}
  \lambda \circ \left( \chi_{\lambda^1}(g,h) \circ \lambda^1_{gh} \right) &= \lambda \circ \left(\lambda^1_g \circ \lambda^1_h \right) \\
  &= \lambda^2_g \circ \lambda^2_h \circ \lambda \\
  &= \left( \chi_{\lambda^2}(g,h) \circ \lambda^2_{gh} \right) \circ \lambda \\
  &= \chi_{\lambda^2}(g,h) \circ \lambda \circ \lambda^1_{gh}
\end{align*}
Since $\lambda^1_{gh}$ is an isomorphism, we conclude that the diagram below commutes.
\[
  \begin{tikzcd}[column sep = huge, row sep = large]
    y_1 \arrow{d}{\chi_{\lambda^1}(g,h)} \arrow{r}{\lambda} & y_2 \arrow{d}{\chi_{\lambda^2}(g,h)} \\
    y_1 \arrow{r}{\lambda} & y_2
  \end{tikzcd}
\]
Therefore, it follows that we have $ f(\chi_{\lambda^1}(g,h)) = \chi_{\lambda^2}(g,h)$.
\end{proof}
\begin{situation}\label{situation-G-action-on-torsor-with-local-section}
  \begin{sloppypar}
    Suppose we have an action of $G$ on an $A\mbox{-torsor}$ $y$ as in Definition \ref{definition-G-action-on-torsor}. Suppose $U \in \objects(\thesite)$ is such that $y$ admits a section $ x \in y(U)$. For $g \in G$, we set $c_g \in A(U)$ to be the unique scalar such that ${\lambda_g(x) = c_g \cdot x}$. 
  \end{sloppypar}
\end{situation}
\begin{lemma}\label{lemma-local-section-commutative-diagram-for-torsors}
In the above Situation \ref{situation-G-action-on-torsor-with-local-section}, we have the following:
\begin{enumerate}
  \begin{sloppypar}
  \item The $1\mbox{-cochain}$ $\{c_g\}_{g \in G}$ with values in $A(U)$ is independent of the chosen section $x \in y(U)$ up to a coboundary. 
  \item The restriction of $\chi_{\lambda}$ to $U$ is the coboundary of the $1\mbox{-cochain}$ $\{c_g\}_{g \in G}$. 
  \end{sloppypar}
\end{enumerate}
\end{lemma}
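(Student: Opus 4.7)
\bigskip
\noindent\textbf{Proof plan.} Both parts reduce to unwinding the defining equation of the $c_g$'s and using the fact that each $\lambda_g$ is $\phi_g$-equivariant, i.e.\ $\lambda_g(a\cdot s)=(g\cdot a)\cdot\lambda_g(s)$ for any local section $s$ of $y$ and any $a\in A$ with underlying group action written $g\cdot a := \phi_g(a)$.

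For part (1), the key observation is that because $y(U)$ is non-empty, $y|_U$ is a trivial $A|_U$-torsor, so any two sections $x,x'\in y(U)$ differ by a unique scalar $a\in A(U)$ with $x'=a\cdot x$. If $\{c_g\}$ is attached to $x$ and $\{c'_g\}$ to $x'$, then the plan is to compute $\lambda_g(x')$ in two ways: on one hand it equals $c'_g\cdot x' = (c'_g + a)\cdot x$, and on the other hand, using $\phi_g$-equivariance, it equals $\lambda_g(a\cdot x) = (g\cdot a)\cdot\lambda_g(x) = ((g\cdot a) + c_g)\cdot x$. Comparing scalars in $A(U)$ (which is allowed because the action is simply transitive on $y(U)$) gives $c'_g - c_g = (g\cdot a) - a$, which is exactly the coboundary of the $0$-cochain $a$.

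For part (2), I would apply the defining diagram of $\chi_\lambda$ from Definition \ref{def-2-cochain-for-torsor-action} to the section $x$. The relation $\lambda_g\circ\lambda_h = \chi_\lambda(g,h)\circ\lambda_{gh}$ evaluated at $x$ gives, on the left,
\[
\lambda_g(\lambda_h(x)) \;=\; \lambda_g(c_h\cdot x) \;=\; (g\cdot c_h)\cdot \lambda_g(x) \;=\; \big((g\cdot c_h)+c_g\big)\cdot x,
\]
and on the right,
\[
\chi_\lambda(g,h)\cdot\lambda_{gh}(x) \;=\; \big(\chi_\lambda(g,h)|_U + c_{gh}\big)\cdot x.
\]
Equating the two scalars in $A(U)$ yields $\chi_\lambda(g,h)|_U = (g\cdot c_h) - c_{gh} + c_g$, which is precisely the standard $2$-coboundary $(\delta c)(g,h)$ of the $1$-cochain $\{c_g\}_{g\in G}$ in the group cohomology complex of $G$ with values in $A(U)$.

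The only real care needed is bookkeeping: writing the torsor action and the group law on $A$ additively while remembering that $\phi_g$-equivariance of $\lambda_g$ is what allows us to pull scalars past $\lambda_g$ with a twist by the $G$-action. There is no genuine obstacle, since everything is forced once a local section is fixed and the simply transitive action of $A(U)$ on $y(U)$ is invoked to compare scalars.
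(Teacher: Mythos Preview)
Your proposal is correct and follows essentially the same approach as the paper: both parts are proved by evaluating the relevant defining relations at the chosen local section $x$ and using the $\phi_g$-equivariance $\lambda_g(a\cdot x)=(g\cdot a)\cdot\lambda_g(x)$ together with simple transitivity of $A(U)$ on $y(U)$ to compare scalars. The paper's computations are line-for-line the same as yours, with only cosmetic differences in notation.
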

\begin{proof}
  For $c \in A(U)$, we have
  \begin{align*}
   \lambda_g(c \cdot x ) &= (g \cdot c) \cdot (\lambda_g(x)) \\
                         &= \left( g \cdot c + c_g \right) \cdot x \\
                         &= \left(g \cdot c  + c_g  - c \right) \cdot (c \cdot x)
  \end{align*}
This proves first part of the lemma.
For $g,h \in G$, we see that
\begin{align*}
  \left(\lambda_{g} \circ \lambda_{h}\right)(x) &= \lambda_{g}\left( c_h \cdot x \right) \\
                                   &= \left(c_g + (g \cdot c_h) \right) \cdot x 
\end{align*}
On the other hand, 
\begin{align*}
  \left(\lambda_{g} \circ \lambda_h\right) (x) &= \left(\chi_{\lambda}(g,h) \circ \lambda_{gh} \right)(x) \\
                                  &= (\chi_{\lambda}(g,h) + c_{gh}) \cdot   x
\end{align*}
Thus we see that 
$$ \chi_{\lambda}(g,h) + c_{gh} = c_g + (g \cdot c_h) .$$
In other words, $\chi_{\lambda}(g,h)$ restricted to $U$ is the
coboundary of $\{c_g\}_{g \in G}$. 
\end{proof}
\begin{proposition}\label{prop-chi-lambda-is-cocycle}
Suppose we have an action of $G$ on an $A\mbox{-torsor}$ $y$ as in Definition \ref{definition-G-action-on-torsor}. Then $\chi_{\lambda}$ is a $2\mbox{-cocycle}$. 
\end{proposition}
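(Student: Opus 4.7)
The plan is to reduce the cocycle identity to a local computation where $y$ acquires a section, and then invoke Lemma \ref{lemma-local-section-commutative-diagram-for-torsors} together with the sheaf property of $A$. The condition to verify, namely
\[
  g \cdot \chi_{\lambda}(h,k) - \chi_{\lambda}(gh,k) + \chi_{\lambda}(g,hk) - \chi_{\lambda}(g,h) = 0,
\]
is an equation among global sections of $A$ on $\xdown$, so it suffices to check it on any cover, since $A$ is a sheaf.

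First I would choose, by Definition \ref{definition-a-torsor}, a cover $\{U_i \to \xdown\}_{i \in I}$ over which $y$ admits sections $x_i \in y(U_i)$. By Lemma \ref{lemma-local-section-commutative-diagram-for-torsors}(2), the restriction $\chi_{\lambda}|_{U_i}$ is the coboundary of the $1$-cochain $\{c_g^{(i)}\}_{g \in G}$ in $A(U_i)$ arising from $\lambda_g(x_i) = c_g^{(i)} \cdot x_i$. Since any coboundary in group cohomology is automatically a cocycle (the identity $d \circ d = 0$ for the standard group cochain complex), the restriction of $\chi_\lambda$ to each $U_i$ satisfies the cocycle identity, and so the identity displayed above holds after restriction to every $U_i$. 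By the sheaf property of $A$ applied to the cover $\{U_i \to \xdown\}$, the identity holds in $A(\xdown)$.

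As an alternative, one could give the direct three-fold composition argument: expand $\lambda_g \circ \lambda_h \circ \lambda_k$ two ways, using $(\lambda_g\lambda_h)\lambda_k = \chi_\lambda(g,h)\chi_\lambda(gh,k)\lambda_{ghk}$ on one side and $\lambda_g(\lambda_h\lambda_k) = \lambda_g \chi_\lambda(h,k) \lambda_{hk}$ on the other, then commute the scalar $\chi_\lambda(h,k) \in A(\xdown)$ past $\lambda_g$ using that $\lambda_g$ is equivariant for the automorphism $\phi_g$, which converts it into $g \cdot \chi_\lambda(h,k)$; equating the resulting scalars before $\lambda_{ghk}$ (permitted because the action of $A$ is simply transitive on sections of $y$) yields the cocycle identity. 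I expect this is the only delicate step, namely pinning down that $\lambda_g \circ (a \cdot -) = (g \cdot a) \cdot \lambda_g$ for $a \in A(\xdown)$, which is precisely what $\phi_g$-equivariance of $\lambda_g$ in Definition \ref{definition-G-action-on-torsor} asserts.

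Between the two routes, I would present the first, because it is shorter and reuses the already-proved Lemma \ref{lemma-local-section-commutative-diagram-for-torsors} rather than redoing an associativity computation that essentially mirrors its proof.
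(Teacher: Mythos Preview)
Your proposal is correct and your primary argument is essentially identical to the paper's proof: choose a cover on which $y$ admits sections, invoke Lemma \ref{lemma-local-section-commutative-diagram-for-torsors} to see that $\chi_\lambda$ is locally a coboundary, and conclude that its coboundary vanishes locally and hence globally by the sheaf condition. Your alternative direct associativity computation is also valid but, as you note, merely repeats the content of that lemma.
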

\begin{proof}
There exists a cover ${\{U_i \to \xdown \}_{i \in I}}$ of $\xdown$ such that $y$ admits section on each $U_i$. By Lemma \ref{lemma-local-section-commutative-diagram-for-torsors}, we see that
the restriction of $\chi_{\lambda}$ on each $U_i$ is a coboundary and therefore its coboundary vanishes on each $U_i$. Thus we see that $\chi_{\lambda}$ is a cocycle. 
\end{proof}
Suppose $y$ is an $A\mbox{-torsor}$ and we have an action $G$ on $y$ given by morphisms ${\{\lambda_g\}_{g \in G}}$. For an object $U$ of $\thesite$, we set 
$$ y_{\lambda}(U) = \left\{x \in y(U) \middle|  \lambda_g(x) = x \text{ for all } g \in G \right\}.$$ 
It is evident that $y_{\lambda}$ is a pseudo $\invariants\mbox{-torsor}$. 
\begin{lemma}\label{lemma-local-cohomology-obstruction-for-local-sections-of-y-lambda}
Suppose the $2\mbox{-cochain}$ $\chi_{\lambda}$ is identically zero. Let $U$ be an object of $\thesite$ over which $y$ admits a section. Then there is a cohomology class in $\cohomology{1}{G}{A(U)}$ which is zero if and only if $y_{\lambda}(U)$ is non-empty. 
\end{lemma}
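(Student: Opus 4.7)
The plan is to construct the desired cohomology class by picking any section of $y$ over $U$ and extracting the associated cochain, then show it is a cocycle using the hypothesis $\chi_\lambda \equiv 0$, and finally show its class detects exactly the existence of a $G$-invariant section.

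First I would pick a section $x \in y(U)$ and form the $1$-cochain $\{c_g\}_{g \in G}$ in $A(U)$ from Situation \ref{situation-G-action-on-torsor-with-local-section}, defined by $\lambda_g(x) = c_g \cdot x$. By part (2) of Lemma \ref{lemma-local-section-commutative-diagram-for-torsors}, the coboundary of $\{c_g\}$ equals $\chi_\lambda|_U$, which is zero by hypothesis. Hence $\{c_g\}$ is a $1$-cocycle and defines a class $\kappa_U \in \cohomology{1}{G}{A(U)}$. By part (1) of the same lemma, $\kappa_U$ is independent of the chosen section $x$, so it is an invariant of the data $(y, \lambda, U)$.

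For the easy direction, suppose $y_\lambda(U)$ is non-empty and pick $x \in y_\lambda(U)$. Then $\lambda_g(x) = x$ for all $g$, so $c_g = 0$ and hence $\kappa_U = 0$.

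For the converse, suppose $\kappa_U = 0$. Then there exists $b \in A(U)$ with $c_g = b - g \cdot b$ for every $g \in G$. Replacing $x$ by the section $b \cdot x \in y(U)$, the formula in the proof of Lemma \ref{lemma-local-section-commutative-diagram-for-torsors} shows that the new cochain associated to $b \cdot x$ is $g \cdot b + c_g - b$, which vanishes by the choice of $b$. This means $\lambda_g(b \cdot x) = b \cdot x$ for every $g$, so $b \cdot x \in y_\lambda(U)$. The main step is simply the accurate bookkeeping of signs in the coboundary convention; no new idea is required since the computational content has already been established in Lemma \ref{lemma-local-section-commutative-diagram-for-torsors}.
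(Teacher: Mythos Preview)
Your proposal is correct and follows exactly the same approach as the paper: both pick a section $x \in y(U)$, use Lemma \ref{lemma-local-section-commutative-diagram-for-torsors} to see that $\{c_g\}$ is a cocycle with well-defined class when $\chi_\lambda|_U = 0$, and then observe that the class vanishes precisely when some translate of $x$ is $G$-fixed. You have simply spelled out the bookkeeping (the explicit choice of $b$ and the verification that $b\cdot x$ is fixed) that the paper leaves implicit in its citation of Lemma \ref{lemma-local-section-commutative-diagram-for-torsors}.
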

\begin{proof}
  We choose a section in $x \in y(U)$ and define the $1\mbox{-cochain}$ $\{c_g\}_{g \in G}$ as in Situation \ref{situation-G-action-on-torsor-with-local-section}. If $\chi_{\lambda}$ is identically zero, then by Lemma \ref{lemma-local-section-commutative-diagram-for-torsors} we see that the cochain $\{c_g\}_{g \in G}$ is a cocycle whose cohomology class is independent of the chosen section $x$. It also follows from Lemma \ref{lemma-local-section-commutative-diagram-for-torsors} that a section in $y(U)$ is fixed by the morphisms $\{\lambda_g\}_{g \in G}$ if and only if this cohomology class is zero. 
\end{proof}
\begin{lemma}\label{lemma-y-lambda-admits-local-sections-iff-general-case}
If $y_\lambda$ is a legitimate $A^G\mbox{-torsor}$, then
the $2\mbox{-cochain}$ $\chi_{\lambda}$ is identically zero. The
converse holds if the first group cohomology classes for the underlying action on $A$ locally vanish (Definition \ref{def-group-cohomlogies-vanish-locally}).
\end{lemma}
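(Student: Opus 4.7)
The plan is to argue each direction locally using the results already established, chiefly Lemma \ref{lemma-local-section-commutative-diagram-for-torsors} and Lemma \ref{lemma-local-cohomology-obstruction-for-local-sections-of-y-lambda}.

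For the forward direction, suppose $y_{\lambda}$ is a legitimate $A^{G}\mbox{-torsor}$. Then there is a cover $\{U_i \to \xdown\}_{i \in I}$ with $y_{\lambda}(U_i)$ non-empty for every $i$. Pick $x_i \in y_{\lambda}(U_i) \subseteq y(U_i)$; since $\lambda_g(x_i) = x_i$ for every $g \in G$, the $1\mbox{-cochain}$ $\{c_g\}_{g \in G}$ of Situation \ref{situation-G-action-on-torsor-with-local-section} associated to $x_i$ is identically zero. By Lemma \ref{lemma-local-section-commutative-diagram-for-torsors}, the restriction of $\chi_{\lambda}$ to $U_i$ is the coboundary of this cochain and hence vanishes. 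Since $A$ is a sheaf and $\chi_{\lambda}(g,h)$ is a section of $A$ over $\xdown$ whose restriction to each $U_i$ is zero, we conclude that $\chi_{\lambda}$ is identically zero.

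For the converse, assume $\chi_{\lambda} \equiv 0$ and that the first group cohomology classes for the underlying action on $A$ locally vanish. To show $y_{\lambda}$ is a legitimate $A^{G}\mbox{-torsor}$, it suffices to produce a cover of $\xdown$ on which $y_{\lambda}$ has sections. First choose a cover $\{V_j \to \xdown\}_{j \in J}$ on which $y$ admits sections $x_j \in y(V_j)$. By Lemma \ref{lemma-local-cohomology-obstruction-for-local-sections-of-y-lambda}, the obstruction to lifting $x_j$ to an element of $y_{\lambda}(V_j)$ is a well-defined class $\alpha_j \in \cohomology{1}{G}{A(V_j)}$, represented by the cocycle $\{c_g\}_{g \in G}$ attached to $x_j$. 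By the local vanishing hypothesis, for each $j$ there is a cover $\{V_{j,k} \to V_j\}_{k \in K_j}$ such that $\alpha_j|_{V_{j,k}} = 0$; concretely, on each $V_{j,k}$ there is some $b \in A(V_{j,k})$ with $c_g|_{V_{j,k}} = (g \cdot b) - b$ for every $g \in G$. Replacing $x_j|_{V_{j,k}}$ by $(-b) \cdot x_j|_{V_{j,k}}$ produces (by the computation in the proof of Lemma \ref{lemma-local-section-commutative-diagram-for-torsors}) a section whose associated $1\mbox{-cochain}$ is zero, hence a $G\mbox{-invariant}$ element of $y(V_{j,k})$, i.e.\ a section of $y_{\lambda}(V_{j,k})$. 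The collection $\{V_{j,k} \to \xdown\}_{j,k}$ is a cover of $\xdown$ on which $y_{\lambda}$ has sections, so $y_{\lambda}$ is a legitimate $A^{G}\mbox{-torsor}$.

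The substantive content is entirely in the two preceding lemmas, so no step should offer serious resistance; the only mild subtlety is keeping the bookkeeping of covers straight in the converse, where one first refines to trivialize $y$ and then refines again to kill the first cohomology obstruction.
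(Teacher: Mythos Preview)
Your proof is correct and follows essentially the same approach as the paper's: both directions reduce to local statements via Lemma \ref{lemma-local-section-commutative-diagram-for-torsors} and Lemma \ref{lemma-local-cohomology-obstruction-for-local-sections-of-y-lambda}, with the converse handled by first trivializing $y$ and then refining to kill the $\cohomology{1}{G}{A(U_i)}$ obstruction. Your version spells out the explicit modification $(-b)\cdot x_j|_{V_{j,k}}$ producing the invariant section, which the paper leaves implicit, but the substance is identical.
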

\begin{proof}
  \begin{sloppypar}
    Suppose for an object $U$ of $\xdown_{\tau}$, the set $y_{\lambda}(U)$ is non-empty. Then we have a section $x \in y(U)$ such that for all $g \in G$, we have ${\lambda_{g}(x) = x}$. Then (See Lemma \ref{lemma-local-section-commutative-diagram-for-torsors}) we see that the restriction of $\chi_{\lambda}$ to $U$ is zero. Thus, if $y_{\lambda}$ admits local sections, then the cochain $\chi_{\lambda}$ vanishes everywhere. 

    Conversely, suppose $\chi_{\lambda}$ is identically zero. Since $y$ admits local sections, there exists a cover ${\{U_i \to \xdown\}_{i \in I}}$ such that $y(U_i)$ is non-empty for each $i \in I$. By Lemma \ref{lemma-local-cohomology-obstruction-for-local-sections-of-y-lambda}, there is a cohomology class in $\cohomology{1}{G}{A(U_i)}$ which is zero if and only if $y_\lambda(U_i)$ is non-empty. If first cohomology classes locally vanish then there exists a refinement of this cover over which $y_{\lambda}$ admits sections.
  \end{sloppypar}
\end{proof}
\begin{sloppypar}
  Now we shift our attention to gerbes. We use the following terminology. Suppose $F,H : \gerbe_1 \to \gerbe_2 $ are morphisms of gerbes. We say $F$ and $H$ 2-commute if they are isomorphic in the category ${\hom_{\xdown}(\gerbe_1,\gerbe_2)}$. Similarly, we say a diagram of 1-morphisms 2-commutes, if the arrows with the same source and the target are 2-isomorphic. 

Let $\gerbe$ be an abelian gerbe banded by $A$.
\end{sloppypar}
\begin{definition}\label{def-group-action-on-gerbe}
An action of $G$ on $\gerbe$ is an assignment of a 1-morphism ${F_g: \gerbe \to \gerbe}$ for every $g \in G$ such that the following holds:
\begin{itemize}
\item The morphism associated to the identity is $A\mbox{-equivariant}$. 
\item For every $g,h \in G$, the morphisms $F_{g \cdot h}$ and $F_{g} \circ F_{h}$ 2-commute.
\end{itemize}
\begin{sloppypar}
  By ``connecting 2-morphisms'' we mean a choice of 2-morphisms ${\alpha_{g,h}: F_{g \cdot h} \to F_g \circ F_h }$ for every $g,h \in G$. If we have a $G\mbox{-action}$ and connecting 2-morphisms as above, then we denote this data by the pair $(F,\alpha)$. Given the action on $\gerbe$, we have an action of $G$ on $A$ such that for all $g \in G$, the morphism $F_g$ is equivariant with respect to the action of $g$ on $A$. We refer to this action as the underlying action of $G$.  
\end{sloppypar}
\end{definition}
\begin{definition}\label{def-3-cochain-def-with-group-action-and-choice}
  \begin{sloppypar}
Suppose $(F,\alpha)$ is the data of $G$ action on the gerbe $\gerbe$ together with connecting $2\mbox{-morphisms}$. Then for $g_1, g_2, g_3 \in G$ we have two 2-morphisms ${F_{g_1 \circ g_2 \circ g_3} \rightrightarrows F_{g_1} \circ F_{g_2} \circ F_{g_3}}$ and we define $\kappa_{(F,\alpha)}(g_1,g_2,g_3)$ to be the ``difference between the two'' as shown in the diagram below:
  \end{sloppypar}
  \[
    \begin{tikzcd}[column sep = huge]
      F_{g_1 \cdot g_2 \cdot g_3}  \arrow{d}{\kappa_{(F,\alpha)}(g_1,g_2,g_3)} \arrow{r}{\alpha_{g_1,g_2g_3}} & F_{g_1} \circ F_{g_2 \cdot g_3}  \arrow{r}{id_{F_{g_1}} \star \, \alpha_{g_2,g_3}} &[2em] F_{g_1} \circ  F_{g_2} \circ F_{g_3} \arrow{d}{\rotatebox{90}{=}}\\
      F_{g_1 \cdot g_2 \cdot g_3} \arrow{r}{\alpha_{g_1g_2,g_3}}  & F_{g_1 \cdot g_2} \circ F_{g_3} \arrow{r}{\alpha_{g_1,g_2} \star \, id_{F_{g_3}}}  &  F_{g_1} \circ F_{g_2}  \circ F _{g_3}
    \end{tikzcd}
  \]
In this context, we regard $\kappa_{(F,\alpha)}$ as a $3\mbox{-}cochain$ with values in $A(\xdown)$ (See Lemma \ref{lemma-automorpism-of-functor-is-global-sections}).
\end{definition}
We want to prove that the 3-cochain $\kappa_{(F,\alpha)}$ is a cocycle whose cohomology class in $\cohomology{3}{G}{A(\xdown)}$ is independent of the choice of the connecting 2-morphisms. We follow a strategy analogous to that for torsors. 
\begin{lemma}\label{lemma-different-connecting-changes-3-cocycle-by-a-coboundary}
  \begin{sloppypar}
 Suppose we have a $G\mbox{-action}$ on $\gerbe$ given by morphisms ${\{F_g\}_{g \in G}}$. Suppose for $g,h \in G$, we have two connecting $2\mbox{-morphisms}$ $\alpha^1_{g,h}$ and $\alpha^2_{g,h}$. Then the 3-cochains $\kappa_{(F,\alpha^2)}$ and $\kappa_{(F,\alpha^1)}$ differ by a coboundary. Moreover, if $\kappa$ is a $3\mbox{-cochain}$ that is cohomologous to $\kappa_{F,\alpha^1}$, then there exists a choice of connecting $2\mbox{-morphisms}$ such that the associated cochain is $\kappa$. 
  \end{sloppypar}
\end{lemma}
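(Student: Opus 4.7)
The plan is to reduce everything to a computation in $A(\xdown)$ by translating 2-morphisms into group elements via Lemma \ref{lemma-automorpism-of-functor-is-global-sections}, then tracking how each of the two paths in the hexagonal diagram defining $\kappa_{(F,\alpha)}$ transforms when we replace $\alpha^1$ with $\alpha^2$.

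First I would fix the data for comparing the two choices. Both $\alpha^1_{g,h}$ and $\alpha^2_{g,h}$ are $2\mbox{-isomorphisms}$ $F_{gh} \to F_g \circ F_h$, so their difference is an automorphism. By Lemma \ref{lemma-automorpism-of-functor-is-global-sections}, there is a unique $b_{g,h} \in A(\xdown)$, viewed as an automorphism of $F_{gh}$, such that $\alpha^2_{g,h} = \alpha^1_{g,h} \circ b_{g,h}$. Thus $\{b_{g,h}\}_{g,h \in G}$ is a $2\mbox{-cochain}$ with values in $A(\xdown)$, and every such cochain arises this way by reversing the recipe; this will give the converse at the end of the proof.

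Next I would compute the top and bottom paths of the $\kappa_{(F,\alpha^2)}$-diagram. For the top path, $(id_{F_{g_1}} \star \alpha^2_{g_2,g_3}) \circ \alpha^2_{g_1,g_2g_3}$ expands, by functoriality of horizontal composition, to $(id_{F_{g_1}} \star \alpha^1_{g_2,g_3}) \circ (id_{F_{g_1}} \star b_{g_2,g_3}) \circ \alpha^1_{g_1,g_2g_3} \circ b_{g_1,g_2g_3}$. Using Lemma \ref{lemma-G-action-on-2-morphisms-same-as-on-global-sections} the factor $id_{F_{g_1}} \star b_{g_2,g_3}$ corresponds to $g_1 \cdot b_{g_2,g_3}$ under the underlying action of $G$ on $A$, and by the conjugation invariance in Lemma \ref{lemma-automorpism-of-functor-is-global-sections} it can be commuted across $\alpha^1_{g_1,g_2g_3}$ to an automorphism of $F_{g_1 g_2 g_3}$ with the same value. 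Hence the top path equals the top path for $\alpha^1$ post-composed (at the source) with the automorphism $g_1 \cdot b_{g_2,g_3} + b_{g_1, g_2 g_3}$. An analogous calculation for the bottom path yields the bottom path for $\alpha^1$ post-composed with $b_{g_1,g_2} + b_{g_1 g_2, g_3}$; here the horizontal composite $b_{g_1,g_2} \star id_{F_{g_3}}$ contributes simply $b_{g_1,g_2}$ by Lemma \ref{lemma-G-action-on-2-morphisms-same-as-on-global-sections}.

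Now I would equate the two paths for $\alpha^2$, insert the definition of $\kappa_{(F,\alpha^2)}$ on one side and of $\kappa_{(F,\alpha^1)}$ on the other, and cancel the common $2\mbox{-morphism}$ (which is an isomorphism) to arrive at the identity in $A(\xdown)$
\[
\kappa_{(F,\alpha^2)}(g_1,g_2,g_3) - \kappa_{(F,\alpha^1)}(g_1,g_2,g_3) = g_1 \cdot b_{g_2,g_3} - b_{g_1 g_2, g_3} + b_{g_1, g_2 g_3} - b_{g_1,g_2},
\]
which is exactly the coboundary $(\delta b)(g_1,g_2,g_3)$ in the standard complex computing $\cohomology{\ast}{G}{A(\xdown)}$. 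For the converse, given any $3\mbox{-cochain}$ $\kappa = \kappa_{(F,\alpha^1)} + \delta b$, defining $\alpha^2_{g,h} := \alpha^1_{g,h} \circ b_{g,h}$ produces connecting $2\mbox{-morphisms}$ with $\kappa_{(F,\alpha^2)} = \kappa$.

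The main obstacle is purely bookkeeping: keeping the order of vertical and horizontal compositions straight, correctly applying Lemma \ref{lemma-G-action-on-2-morphisms-same-as-on-global-sections} (which is asymmetric in its two arguments, with the map $\delta_F$ on the right band only), and using the conjugation invariance part of Lemma \ref{lemma-automorpism-of-functor-is-global-sections} to slide band-valued automorphisms past the $\alpha^1_{g,h}$'s without picking up spurious terms. Once these manipulations are done carefully, the coboundary formula drops out immediately, and the sign conventions match those of the group cohomology differential in degree two.
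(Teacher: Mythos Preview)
Your proof is correct and follows essentially the same approach as the paper: define the comparison $2\mbox{-cochain}$ $b_{g,h}$ (the paper calls it $c(g,h)$, with the roles of $\alpha^1$ and $\alpha^2$ swapped), substitute into the defining diagram for $\kappa$, and use Lemmas~\ref{lemma-automorpism-of-functor-is-global-sections} and~\ref{lemma-G-action-on-2-morphisms-same-as-on-global-sections} to extract the coboundary formula. Your write-up is more explicit about tracking the top and bottom paths separately and about the role of conjugation invariance, but the content is the same.
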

\begin{proof}
\begin{sloppypar}
  For $g,h \in G$, there exists an automorphism of $F_{g \cdot h}$ corresponding to an element ${c(g,h) \in A(\xdown)}$ such that the following diagram of arrows in $\hom_{\thesite}(\gerbe,\gerbe)$ commutes:
\end{sloppypar}
\[
  \begin{tikzcd}
    F_{g \circ h} \arrow{r}{\alpha^1_{g,h}} \arrow{d}{c(g,h)} & F_{g} \circ F_{h}  \\
    F_{g \circ h} \arrow{ur}[swap]{\alpha^2_{g,h}}
  \end{tikzcd}
\]
We write $\alpha^1_{g,h}$ as $\alpha^2_{g,h} \circ c(g,h)$ in Definition \ref{def-3-cochain-def-with-group-action-and-choice} and see that the difference between $\kappa_{(F,\alpha^1)}(g_1,g_2,g_3)$ and $\kappa_{(F,\alpha^2)}(g_1,g_2,g_3)$ is given by (See Lemma \ref{lemma-G-action-on-2-morphisms-same-as-on-global-sections})
$$ c(g_1,g_2g_3) + (g_1 \cdot c(g_2,g_3)) - c(g_1,g_2) - c(g_1g_2,g_3).$$
This is the coboundary of the 2-cochain $\{c(g,h)\}_{g,h \in G}$. To prove the second part of the lemma, we note that we can suitably modify the $2\mbox{-morphisms}$ such that the resulting cochain is altered by any given coboundary. 
\end{proof}
Thus we see that up to a coboundary the $3\mbox{-cochain}$ $\kappa_{(F,\alpha)}$ is independent of the choice of connecting $2\mbox{-morphisms}$. We denote by $\kappa_F$ the equivalence class of $\kappa_{(F,\alpha)}$ modulo coboundaries.
\begin{definition}\label{def-G-equivariant-morphism-of-gerbes}
Suppose $\gerbe_1$ and $\gerbe_2$ are abelian gerbes over $\xdown$ banded by $A_1$ and $A_2$, respectively. Suppose we have a $G\mbox{-action}$ on both given by morphisms ${\{F^1_g\}_{g \in G}}$ and ${\{F^2_g\}_{g \in G}}$. By a $G\mbox{-equivariant}$ morphisms of gerbes we mean a morphism of gerbes ${F: \gerbe_1 \to \gerbe_2}$ such that for $g \in G$, the following diagram 2-commutes.
\[
  \begin{tikzcd}
    \gerbe_1 \arrow{d}{F^1_g} \arrow{r}{F} & \gerbe_2 \arrow{d}{F^2_g} \\
    \gerbe_1 \arrow{r}{F} & \gerbe_2 
  \end{tikzcd}
\]
In this situation, we have an underlying action of $G$ on both $A_1$ and $A_2$ while the map induced by $F$ gives a $G\mbox{-equivariant}$ map from $A_1$ to $A_2$.
\end{definition}
\begin{proposition}\label{prop-g-equivariant-map-and-3-cochain}
  \begin{sloppypar}
    Suppose in the situation of Definition \ref{def-G-equivariant-morphism-of-gerbes} we denote by ${\delta_{F}: A_1 \to A_2}$ the map induced by $F$ on the bands. Then we have ${\kappa_{F^2} = \delta_{F} \circ \kappa_{F^1}}$.
  \end{sloppypar}
\end{proposition}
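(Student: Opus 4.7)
The plan is to exploit the freedom in choosing connecting $2\mbox{-morphisms}$ and to express the difference of the two representing cocycles as the coboundary of a natural $2\mbox{-cochain}$ measuring the incompatibility between the $G\mbox{-actions}$ and the morphism $F$. First I would fix arbitrary connecting $2\mbox{-morphisms}$ $\alpha^1_{g,h}: F^1_{gh} \to F^1_g \circ F^1_h$ and $\alpha^2_{g,h}: F^2_{gh} \to F^2_g \circ F^2_h$ for the two $G\mbox{-actions}$. By Definition \ref{def-G-equivariant-morphism-of-gerbes}, for every $g \in G$ the composites $F^2_g \circ F$ and $F \circ F^1_g$ are $2\mbox{-isomorphic}$, so I would fix once and for all a $2\mbox{-isomorphism}$ $\beta_g : F \circ F^1_g \to F^2_g \circ F$.

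Next, for each pair $(g,h) \in G^2$ there are two natural $2\mbox{-morphisms}$ $F \circ F^1_{gh} \to F^2_g \circ F^2_h \circ F$: one that first decomposes by $\alpha^1_{g,h}$ whiskered with $F$ and then conjugates across with $\beta_g$ and $\beta_h$, and another that first conjugates with $\beta_{gh}$ and then decomposes by $\alpha^2_{g,h}$ whiskered with $F$. These two paths differ by an automorphism of their common source, which by Lemma \ref{lemma-automorpism-of-functor-is-global-sections} corresponds to a scalar $c(g,h) \in A_2(\xdown)$; this produces a $2\mbox{-cochain}$ $c$ on $G$ with values in $A_2(\xdown)$ that is the sole bookkeeping device for the proof.

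The heart of the argument is then to compare, for each triple $(g_1, g_2, g_3) \in G^3$, the two ways of building a $2\mbox{-morphism}$ from $F \circ F^1_{g_1 g_2 g_3}$ to $F^2_{g_1} \circ F^2_{g_2} \circ F^2_{g_3} \circ F$: the one that decomposes on the $\gerbe_1$ side via the $\alpha^1$'s and only afterwards conjugates with the $\beta$'s, and the one that first conjugates with $\beta_{g_1 g_2 g_3}$ and then decomposes on the $\gerbe_2$ side via the $\alpha^2$'s. By Lemma \ref{lemma-G-action-on-2-morphisms-same-as-on-global-sections}, whiskering with $\text{id}_F$ converts the automorphism $\kappa_{(F^1,\alpha^1)}(g_1,g_2,g_3) \in A_1(\xdown)$ of $F^1_{g_1 g_2 g_3}$ into the scalar $\delta_F(\kappa_{(F^1,\alpha^1)}(g_1,g_2,g_3)) \in A_2(\xdown)$ acting on $F \circ F^1_{g_1 g_2 g_3}$. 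A prism diagram whose six ``conjugation'' faces contribute the values $c(g_i g_j, g_k)$, $c(g_i, g_j g_k)$, and $g_1 \cdot c(g_2, g_3)$ (this last one coming from the whiskering rule of Lemma \ref{lemma-G-action-on-2-morphisms-same-as-on-global-sections} applied to $F^2_{g_1}$) then produces the identity
$$\delta_F(\kappa_{(F^1,\alpha^1)}(g_1,g_2,g_3)) - \kappa_{(F^2,\alpha^2)}(g_1,g_2,g_3) = g_1 \cdot c(g_2, g_3) - c(g_1 g_2, g_3) + c(g_1, g_2 g_3) - c(g_1, g_2),$$
whose right-hand side is the coboundary of $c$. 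Hence $\delta_F \circ \kappa_{(F^1,\alpha^1)}$ and $\kappa_{(F^2,\alpha^2)}$ are cohomologous, yielding the desired equality $\kappa_{F^2} = \delta_F \circ \kappa_{F^1}$ of classes. The main obstacle is the bookkeeping in this prism diagram chase, but the heavy lifting is done by Lemma \ref{lemma-G-action-on-2-morphisms-same-as-on-global-sections}, which linearizes horizontal composition, together with the explicit hexagon of Definition \ref{def-3-cochain-def-with-group-action-and-choice}.
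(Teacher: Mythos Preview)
Your approach is correct and follows essentially the same skeleton as the paper: fix the $2$\mbox{-}isomorphisms $\beta_g$ (the paper calls them $\epsilon_g$) witnessing the $2$\mbox{-}commutativity of the equivariance squares, and then compare the two ways of getting from $F \circ F^1_{g_1g_2g_3}$ to $F^2_{g_1} \circ F^2_{g_2} \circ F^2_{g_3} \circ F$ using Lemmas \ref{lemma-automorpism-of-functor-is-global-sections} and \ref{lemma-G-action-on-2-morphisms-same-as-on-global-sections}.

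The one genuine difference is in how the connecting $2$\mbox{-}morphisms on $\gerbe_2$ are handled. You fix $\alpha^2_{g,h}$ arbitrarily at the outset, record the discrepancy between the two paths $F \circ F^1_{gh} \rightrightarrows F^2_g \circ F^2_h \circ F$ as a $2$\mbox{-}cochain $c(g,h) \in A_2(\xdown)$, and then run the prism diagram to exhibit $\delta_F(\kappa_{(F^1,\alpha^1)}) - \kappa_{(F^2,\alpha^2)}$ as the coboundary of $c$. The paper instead \emph{defines} $\alpha^2_{g,h}$ to be the unique $2$\mbox{-}morphism making that same square commute on the nose (so that your $c$ is identically zero), and then checks directly that $\kappa_{(F^2,\alpha^2)} = \delta_F \circ \kappa_{(F^1,\alpha^1)}$ as cocycles, not just as classes. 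This is legitimate because $\kappa_{F^2}$ is by definition (see the remark after Lemma \ref{lemma-different-connecting-changes-3-cocycle-by-a-coboundary}) the class modulo coboundaries, so any single choice of connecting $2$\mbox{-}morphisms suffices to compute it. The paper's shortcut trades your explicit coboundary identity for the (mild) task of arguing that such an $\alpha^2_{g,h}$ exists and is unique, which follows from Lemma \ref{lemma-automorpism-of-functor-is-global-sections}; in exchange it avoids your prism bookkeeping entirely. Your version is more symmetric and makes the role of the freedom in $\alpha^2$ transparent, while the paper's version is shorter.
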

\begin{proof}
  \begin{sloppypar}
Let ${\{\alpha^1_{g,h}\}_{g,h \in G}}$ be a choice of connecting $2\mbox{-morphisms}$ for the group action on $\gerbe_1$. For $g \in G$, we choose a $2\mbox{-morphism}$ ${\epsilon_g: F \circ F^1_g \to F^2_g \circ F}$. Then for $g,h \in G$, we denote by $\epsilon(g,h)$ the composite $2\mbox{-morphism}$ below:
  \end{sloppypar}
  \[
  \begin{tikzcd}[column sep = huge]
    F \circ F^1_g \circ F^1_h \arrow{r}{\epsilon_g \star \, id_{F^1_h}} & F^2_g \circ F \circ F^1_h \arrow{r}{id_{F^2_g} \star \, \epsilon_h} &  F^2_g \circ F^2_h  \circ F 
  \end{tikzcd}
\]
We can verify that for $g_1,g_2,g_3 \in G$, the following diagram commutes:
\[
\begin{tikzcd}[column sep = huge]
  F \circ F^1_{g_1} \circ F^1_{g_2} \circ F^1_{g_3} \arrow{d}{=} \arrow{r}{\epsilon(g_1,g_2) \, \star id_{F^1_{g_3}}} & F^2_{g_1} \circ F^2_{g_2} \circ F \circ F^1_{g_3} \arrow{r}{id_{-} \star \, \epsilon_{g_3}} & F^2_{g_1} \circ F^2_{g_2} \circ F^2_{g_3} \circ F \arrow{d}{=} \\
 F \circ F^1_{g_1} \circ F^1_{g_2} \circ F^1_{g_3} \arrow{r}{\epsilon_{g_1} \, \star  id_{-}} & F^2_{g_1} \circ F \circ F^1_{g_2} \circ F^1_{g_3} \arrow{r}{id_{F^2_{g_1}} \star \, \epsilon(g_2,g_3)} & F^2_{g_1} \circ F^2_{g_2} \circ F^2_{g_3} \circ F
\end{tikzcd}
\]
From Lemma \ref{lemma-G-action-on-2-morphisms-same-as-on-global-sections} and Lemma \ref{lemma-automorpism-of-functor-is-global-sections} we can argue that there exists a unique $2\mbox{-morphism}$ ${\alpha^2_{g,h}: F^2_{gh} \to F^2_g \circ F^2_h}$ such that the diagram below commutes:
\[
  \begin{tikzcd}[column sep = huge]
    F^1_{g \circ h} \circ F \arrow{d}{\epsilon_{gh}} \arrow{r}{\alpha^1_{g,h} \, \star id_F} & F^1_g \circ F^1_h \circ F \arrow{d}{\epsilon(g,h)} \\
    F \circ F^2_{g \circ h} \arrow{r}{id_F \star \, \alpha^2_{g,h}} & F \circ F^2_g \circ F^2_h  
  \end{tikzcd}
\]
Now we can verify that the diagram below commutes with the above choice of connecting $2\mbox{-morphisms}$:  
\[
  \begin{tikzcd}[row sep = large, column sep = huge]
    F^1_{g_1g_2g_3} \circ F \arrow{d}{\epsilon_{g_1g_2g_3}} \arrow{r}{\kappa_{(F^1,\alpha^1)} \star id_F} & F^1_{g_1g_2g_3} \circ F \arrow{d}{\epsilon_{g_1g_2g_3}} \\
    F \circ F^2_{g_1g_2g_3}  \arrow{r}{ id_F \star \, \kappa_{(F^2,\alpha^1)}} & F \circ F^2_{g_1g_2g_3} 
  \end{tikzcd}
\]
It follows that we have (See Lemma \ref{lemma-G-action-on-2-morphisms-same-as-on-global-sections} and Lemma \ref{lemma-automorpism-of-functor-is-global-sections})
  $${ \delta_{F}(\kappa_{F^1,\alpha^1}(g_1,g_2,g_3)) = \kappa_{F^1,\alpha^1}(g_1,g_2,g_3)}.$$
\end{proof}
\begin{corollary}\label{cor-3-cochain-depends-only-on-2-iso-class-upto-coboundary}
Suppose we have two $G\mbox{-actions}$ on the gerbe $\gerbe$ given by morphisms ${\{F_g\}_{g \in G}}$ and $\{F'_g\}_{g \in G}$ such that for $g \in G$, the morphisms $F_g$ and $F'_g$ are $2\mbox{-isomorphic}$. Then we have 
${\kappa_{F} = \kappa_{F'}}$.
\end{corollary}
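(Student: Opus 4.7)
The plan is to reduce this directly to Proposition \ref{prop-g-equivariant-map-and-3-cochain} by applying it to the identity morphism $\mathrm{id}_{\gerbe}: \gerbe \to \gerbe$, with $\gerbe_1 = \gerbe_2 = \gerbe$, $A_1 = A_2 = A$, $F^1_g = F_g$ and $F^2_g = F'_g$. The only hypothesis to verify is that $\mathrm{id}_{\gerbe}$ is a $G\mbox{-equivariant}$ morphism of gerbes in the sense of Definition \ref{def-G-equivariant-morphism-of-gerbes} with respect to the two actions. That is, we must check that the square
\[
\begin{tikzcd}
\gerbe \arrow{d}{F_g} \arrow{r}{\mathrm{id}} & \gerbe \arrow{d}{F'_g} \\
\gerbe \arrow{r}{\mathrm{id}} & \gerbe
\end{tikzcd}
\]
$2\mbox{-commutes}$ for every $g \in G$. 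Unwinding the definition, $2\mbox{-commutativity}$ here amounts to the statement that $F_g$ and $F'_g$ are $2\mbox{-isomorphic}$, which is exactly the hypothesis of the corollary.

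Once equivariance is in hand, the conclusion follows at once. The map $\delta_{\mathrm{id}}: A \to A$ induced on bands by the identity functor is the identity (cf.\ Remark \ref{remark-iso-functors-induce-same-map-on-bands}), so it acts as the identity on $A(\xdown)$. Proposition \ref{prop-g-equivariant-map-and-3-cochain} then yields
\[
\kappa_{F'} \;=\; \delta_{\mathrm{id}} \circ \kappa_{F} \;=\; \kappa_{F}
\]
as classes of $3\mbox{-cochains}$ modulo coboundaries, which is the required equality. There is no real obstacle in the argument; the content of the corollary is precisely that the previous proposition, specialized to the identity, gives a cohomological invariant depending only on the $2\mbox{-isomorphism}$ class of the morphisms $F_g$.
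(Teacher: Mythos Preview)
Your proof is correct and follows exactly the same approach as the paper's own proof: the identity morphism of $\gerbe$ is $G$-equivariant with respect to the two actions, and the claim then follows from Proposition \ref{prop-g-equivariant-map-and-3-cochain}. You have simply spelled out the details (the $2$-commutativity of the square and the fact that $\delta_{\mathrm{id}}$ is the identity) that the paper leaves implicit.
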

\begin{proof}
In this case, the identity morphism of ${\gerbe}$ is $G\mbox{-equivariant}$ with respect to the two $G\mbox{-actions}$. Therefore the claim follows from Proposition \ref{prop-g-equivariant-map-and-3-cochain}.
\end{proof}
The following is the analogue of the Situation \ref{situation-G-action-on-torsor-with-local-section}, where we prove that the $3\mbox{-cochain}$ $\kappa_{(F,\alpha)}$ is locally a coboundary. 
\begin{situation}\label{situation-for-local-objects-for-gerbe-F-alpha}
Let $(F,\alpha)$ be the data of a group action by $G$ on a gerbe $\gerbe$ banded by $A$. Let $U$ be an object of $\thesite$ such that the following holds:
\begin{itemize}
\item There is an object $y \in \objects(\gerbe_U)$ over $U$. 
\item For all $g \in G$, $F_g(y)$ is isomorphic to $y$ in $\gerbe_U$.
\end{itemize}
We choose isomorphisms ${ \phi_g: y \to F_g(y)}$ for all $g \in G$. For $g,h \in G$, we define $\gamma(g,h)$ to be the unique isomorphism in $\gerbe_U$ which makes the following diagram commute:
\[
  \begin{tikzcd}
    y \arrow{d}{\phi_g} \arrow{r}{\phi_{gh}} & F_{gh}(y) \arrow{d}{\gamma(g,h)} \\
    F_g(y) \arrow{r}{F_g(\phi_h)} & \left(F_g \circ F_h\right) (y) 
  \end{tikzcd}
\]
We set scalars $c(g,h) \in A(U)$ such that ${c(g,h) \cdot \alpha_{g,h}(y) = \gamma(g,h)}$. 
\end{situation}
\begin{lemma}\label{lemma-commutative-diagram-for-local-objects-gerbe-F-alpha}
  In the above Situation \ref{situation-for-local-objects-for-gerbe-F-alpha} for $g_1,g_2,g_3 \in G$, let $\omega(g_1,g_2,g_3)$ be the unique isomorphism in $\gerbe_U$ that fits in the following commutative diagram:
\[
  \begin{tikzcd}[column sep = huge]
    F_{g_1g_2}(y) \arrow{d}{\gamma(g_1,g_2)} \arrow{r}{F_{g_1g_2}(\phi_{g_3})} & ( F_{g_1g_2} \circ F_{g_3})(y) \arrow{d}{\omega(g_1,g_2,g_3)} \\
    F_{g_1} \circ F_{g_2} (y) \arrow{r}{F_{g_1} \circ F_{g_2}(\phi_{g_3})} &     ( F_{g_1} \circ F_{g_2}\circ F_{g_3} )(y)
  \end{tikzcd}
\]
Then we claim that the diagram below commutes:
\[
  \begin{tikzcd}[column sep = huge]
    F_{g_1g_2g_3}(y) \arrow{d}{\rotatebox{90}{=}} \arrow{r}{\gamma(g_1,g_2g_3)} & (F_{g_1} \circ F_{g_2g_3})(y) \arrow{r}{F_{g_1}(\gamma(g_2,g_3))} & (F_{g_1} \circ F_{g_2} \circ F_{g_3}) (y) \arrow{d}{\rotatebox{90}{=}} \\
    F_{g_1g_2g_3}(y) \arrow{r}{\gamma(g_1g_2,g_3)} & (F_{g_1g_2} \circ F_{g_3})(y) \arrow{r}{\omega(g_1,g_2,g_3)} & (F_{g_1} \circ F_{g_2} \circ F_{g_3}) (y) 
  \end{tikzcd}
\]
\end{lemma}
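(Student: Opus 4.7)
The plan is a direct diagram chase that reduces everything to the defining squares of the $\gamma(g,h)$'s and $\omega(g_1,g_2,g_3)$. Since $\phi_{g_1g_2g_3}\colon y \to F_{g_1g_2g_3}(y)$ is an isomorphism in the groupoid $\gerbe_U$, it suffices to verify that the two composites along the outer rectangle agree after precomposition with $\phi_{g_1g_2g_3}$. I aim to show that both composites, evaluated on $\phi_{g_1g_2g_3}$, reduce to the common expression $F_{g_1}F_{g_2}(\phi_{g_3}) \circ \gamma(g_1,g_2) \circ \phi_{g_1g_2}$.

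For the top composite $F_{g_1}(\gamma(g_2,g_3)) \circ \gamma(g_1,g_2g_3)$, I would first use the defining square of $\gamma(g_1, g_2g_3)$ to replace $\gamma(g_1,g_2g_3) \circ \phi_{g_1g_2g_3}$ by $F_{g_1}(\phi_{g_2g_3}) \circ \phi_{g_1}$. Next I would apply the functor $F_{g_1}$ to the defining square of $\gamma(g_2,g_3)$, obtaining the identity $F_{g_1}(\gamma(g_2,g_3)) \circ F_{g_1}(\phi_{g_2g_3}) = F_{g_1}F_{g_2}(\phi_{g_3}) \circ F_{g_1}(\phi_{g_2})$. A final application of the defining square of $\gamma(g_1,g_2)$ converts $F_{g_1}(\phi_{g_2}) \circ \phi_{g_1}$ into $\gamma(g_1,g_2) \circ \phi_{g_1g_2}$, yielding the target expression.

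For the bottom composite, the defining square of $\gamma(g_1g_2,g_3)$ gives $\gamma(g_1g_2,g_3) \circ \phi_{g_1g_2g_3} = F_{g_1g_2}(\phi_{g_3}) \circ \phi_{g_1g_2}$, and the defining square of $\omega(g_1,g_2,g_3)$ gives $\omega(g_1,g_2,g_3) \circ F_{g_1g_2}(\phi_{g_3}) = F_{g_1}F_{g_2}(\phi_{g_3}) \circ \gamma(g_1,g_2)$. Splicing these two identities produces the same expression $F_{g_1}F_{g_2}(\phi_{g_3}) \circ \gamma(g_1,g_2) \circ \phi_{g_1g_2}$, and invertibility of $\phi_{g_1g_2g_3}$ closes the argument.

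The only real obstacle is bookkeeping: one must keep track of which compositions lie inside the functor $F_{g_1}$ and which lie outside, and consistently apply functoriality of $F_{g_1}$ when transporting the $\gamma(g_2,g_3)$-square. No cocycle identity for $\alpha$ or 2-categorical coherence beyond the four defining squares just listed is invoked at this stage, since the scalars $c(g,h)$ only enter later when $\gamma(g,h)$ is compared with $\alpha_{g,h}(y)$; here we are working strictly at the level of isomorphisms in $\gerbe_U$.
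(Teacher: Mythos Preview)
Your argument is correct and is essentially the paper's own proof: both precompose the two composites with the isomorphism $\phi_{g_1g_2g_3}$ and reduce each side, via the defining squares of the $\gamma$'s and of $\omega$, to a common morphism $y \to (F_{g_1}\circ F_{g_2}\circ F_{g_3})(y)$. Your target expression $F_{g_1}F_{g_2}(\phi_{g_3}) \circ \gamma(g_1,g_2) \circ \phi_{g_1g_2}$ is, by the very $\gamma(g_1,g_2)$-square you invoke in Step~3, the same as the paper's $F_{g_1}F_{g_2}(\phi_{g_3}) \circ F_{g_1}(\phi_{g_2}) \circ \phi_{g_1}$, so the two write-ups differ only by where that square is applied.
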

\begin{proof}
  \begin{sloppypar}
    We have two morphisms ${y \rightrightarrows (F_{g_1} \circ F_{g_2} \circ F_{g_3}) (y)}$ given by 
$${F_{g_1}(\gamma(g_2,g_3)) \circ \, \gamma(g_1,g_2g_3) \circ \, \phi_{g_1g_2g_3}} \text{ and } {\omega(g_1,g_2,g_3) \circ \, \gamma(g_1g_2,g_3) \circ \, \phi_{g_1g_2g_3}}.$$ 
It is straightforward to verify that both of these morphisms are equal to the morphism below:
  \end{sloppypar}
\[
  \begin{tikzcd}
    y \arrow{r}{F_{g_1}} & F_{g_1}(y) \arrow{r}{F_{g_1}(\phi_{g_2})} & (F_{g_1} \circ F_{g_2}) (y) \arrow{rr}{(F_{g_1} \circ F_{g_2})(\phi_{g_3})} & \empty & (F_{g_1} \circ F_{g_2} \circ F_{g_3}) (y)
  \end{tikzcd}
\]
Since $\phi_{g_1g_2g_3}$ is an isomorphism, the conclusion follows. 
\end{proof}
\begin{lemma}\label{lemma-locally-kappa-f-alpha-is-coboundary}
In Situation \ref{situation-for-local-objects-for-gerbe-F-alpha} we have the following:
\begin{enumerate}
  \begin{sloppypar}
  \item The $2\mbox{-cochain}$ $\{c(g,h)\}_{g,h \in G}$ is independent of the isomorphisms $\{\phi_g\}_{g \in G}$ up to a coboundary.
  \item If $y' \in \objects(\gerbe_{U})$ is an object isomorphic to $y$, then there exist isomorphisms ${\{\phi'_{g}: y' \to F_g(y')\}}$ for which the resulting $2\mbox{-cochain}$ is identical to ${\{c(g,h)\}_{g,h \in G}}$. 
  \item The restriction of $\kappa_{(F,\alpha)}$ to $U$ is the
    coboundary of the $2\mbox{-cochain}$ $\{c(g,h)\}_{g,h \in G}$. 
  \end{sloppypar}
\end{enumerate}
\end{lemma}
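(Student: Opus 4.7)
The plan is to follow the template of the torsor analogue (Lemma \ref{lemma-local-section-commutative-diagram-for-torsors}), with the local isomorphisms $\gamma(g,h)$ playing the role of the scalars $c_g$ from the torsor case, and Lemma \ref{lemma-commutative-diagram-for-local-objects-gerbe-F-alpha} providing the associativity identity that drives the coboundary computation. Throughout I use freely that $F_g$ acts on the band $A$ by the automorphism $g$ (Lemma \ref{lemma-G-action-on-2-morphisms-same-as-on-global-sections}), so that applying $F_g$ to a scalar $a \in A(U)$ viewed as an automorphism of an object in $\gerbe_U$ produces the scalar $g \cdot a$.

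For part (1), I would replace each $\phi_g$ by $\phi'_g = b_g \cdot \phi_g$ for sections $b_g \in A(U)$, using the isomorphism $\aut(F_g(y)) \cong A(U)$ coming from the band. Unwinding the defining diagram for the new $\gamma'(g,h)$ should give $\gamma'(g,h) = (g \cdot b_h + b_g - b_{gh}) \cdot \gamma(g,h)$, so that $c'(g,h) - c(g,h)$ is exactly the coboundary of the $1\mbox{-cochain}$ $\{b_g\}_{g \in G}$. For part (2), given an isomorphism $\psi: y \to y'$ in $\gerbe_U$, set $\phi'_g = F_g(\psi) \circ \phi_g \circ \psi^{-1}$. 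Applying the naturality of $\alpha_{g,h}$ to $\psi$ and computing directly from the definition of $\gamma'(g,h)$ yields $\gamma'(g,h) = c(g,h) \cdot \alpha_{g,h}(y')$, so the new cochain is identical to $\{c(g,h)\}_{g,h \in G}$.

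The main content is part (3). My strategy is to rewrite both $\gamma$ and $\omega$ in terms of the $\alpha$'s and the scalars $c$, and then feed these into the equality of Lemma \ref{lemma-commutative-diagram-for-local-objects-gerbe-F-alpha}. The naturality of $\alpha_{g_1,g_2}$ applied to $\phi_{g_3}: y \to F_{g_3}(y)$ gives
\[
\alpha_{g_1,g_2}(F_{g_3}(y)) \circ F_{g_1g_2}(\phi_{g_3}) \;=\; F_{g_1}F_{g_2}(\phi_{g_3}) \circ \alpha_{g_1,g_2}(y),
\]
and substituting $\gamma(g_1,g_2) = c(g_1,g_2) \cdot \alpha_{g_1,g_2}(y)$ into the defining equation for $\omega(g_1,g_2,g_3)$ lets me cancel the isomorphism $F_{g_1g_2}(\phi_{g_3})$ to extract $\omega(g_1,g_2,g_3) = c(g_1,g_2) \cdot \alpha_{g_1,g_2}(F_{g_3}(y))$. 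Plugging $\gamma(g,h) = c(g,h) \cdot \alpha_{g,h}(y)$ and this expression for $\omega$ into the commutative diagram of Lemma \ref{lemma-commutative-diagram-for-local-objects-gerbe-F-alpha} and invoking the defining relation of $\kappa_{(F,\alpha)}$ from Definition \ref{def-3-cochain-def-with-group-action-and-choice}, both sides of that identity become scalar multiples of the common composite $\alpha_{g_1,g_2}(F_{g_3}(y)) \circ \alpha_{g_1g_2,g_3}(y)$. Cancelling this factor, the remaining identity in $A(U)$ should collapse to
\[
\kappa_{(F,\alpha)}(g_1,g_2,g_3) \;=\; c(g_1,g_2) - c(g_1,g_2g_3) + c(g_1g_2,g_3) - g_1 \cdot c(g_2,g_3),
\]
which is (up to the sign convention) the coboundary of the $2\mbox{-cochain}$ $\{c(g,h)\}_{g,h \in G}$.

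The main obstacle is the bookkeeping in step (3): correctly tracking how scalars in $A(U)$ are twisted when they are transported through the functors $F_g$, and verifying that the $g_1$-twist on $c(g_2,g_3)$ appears in precisely the slot required for a group-cohomology coboundary rather than some shifted variant. Once the identification of $\omega$ with $c(g_1,g_2) \cdot \alpha_{g_1,g_2}(F_{g_3}(y))$ is in hand, the rest of the computation is essentially mechanical.
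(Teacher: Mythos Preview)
Your proposal is correct and follows essentially the same route as the paper: for (1) and (2) the paper does exactly the substitutions $\phi'_g = a_g \cdot \phi_g$ and $\phi'_g = F_g(\lambda) \circ \phi_g \circ \lambda^{-1}$ you describe, and for (3) it likewise uses naturality of $\alpha_{g_1,g_2}$ at $\phi_{g_3}$ to identify $\omega(g_1,g_2,g_3) = c(g_1,g_2)\cdot(\alpha_{g_1,g_2}\star id_{F_{g_3}})(y)$ and then feeds the resulting expressions into Lemma~\ref{lemma-commutative-diagram-for-local-objects-gerbe-F-alpha}. The paper's final formula is the negative of yours, $\kappa_{(F,\alpha)}|_U = c(g_1,g_2g_3) + g_1\!\cdot\! c(g_2,g_3) - c(g_1g_2,g_3) - c(g_1,g_2)$, so your hedge ``up to the sign convention'' is apt; either sign gives a coboundary, so the lemma's conclusion is unaffected.
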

\begin{proof}
  \begin{sloppypar}
    A different choice of isomorphisms is given by $\{a_g \cdot \phi_g\}_{g \in G}$, where $\{a_g\}_{g \in G}$ is a $1\mbox{-cochain}$ with values in $A(U)$. It is straightforward to see that this alters the cochain $\{c(g,h)\}_{g,h}$ by the coboundary of $\{a_g\}_{g \in G}$.

    Suppose $y' \xrightarrow{\lambda} y$ is an isomorphism in $\gerbe_U$. Then we can choose ${\phi'_g: y' \to \phi_g(y')}$ such following diagram commutes:
\[
  \begin{tikzcd}
    y \arrow{d}{\lambda} \arrow{r}{\phi_g} & F_g(y) \arrow{d}{F_g(\lambda)} \\
    y' \arrow{r}{\phi'_g} & F_g(y')
  \end{tikzcd}
\]
Then the resulting cochain for $y'$ is identical to $\{c(g,h)\}_{g,h \in G}$. 
  
  Let $g_1,g_2,g_3 \in G$ be three group elements. Since ${\alpha_{g_1,g_2}: F_{g_1g_2} \to F_{g_1} \circ F_{g_2}}$ is a natural transformation, we see that the following diagram commutes:
  \end{sloppypar}
\[
  \begin{tikzcd}[column sep = huge, row sep = large]
    F_{g_1g_2}(y) \arrow{d}{\alpha_{g_1,g_2}(y)} \arrow{r}{F_{g_1g_2}(\phi_{g_3})} & F_{g_1g_2} \circ F_{g_3}(y) \arrow{d}{\alpha_{g_1,g_2} (F_{g_3}(y))}\\
    F_{g_1} \circ F_{g_2} (y) \arrow{r}{F_{g_1} \circ F_{g_2}(\phi_{g_3})} &     F_{g_1} \circ F_{g_2}\circ F_{g_3} (y)    
  \end{tikzcd}
\]
\begin{sloppypar}
  We note that ${\alpha_{g_1,g_2} (F_{g_3}(y)) = (\alpha_{g_1,g_2} \star
  id_{F_{g_3}})(y)}$ so that with $\omega(g_1,g_2,g_3)$ as in Lemma \ref{lemma-commutative-diagram-for-local-objects-gerbe-F-alpha}, we have ${\omega(g_1,g_2,g_3) = a(g_1,g_2) \cdot \left( \alpha_{g_1,g_2} \star id_{F_{g_3}}\right)(y) }$. Also, we have ${F_{g_1}(\alpha_{g_2,g_3}(y)) = \left( id_{F_{g_1}} \star \alpha_{g_2,g_3}\right) (y)}$. By Lemma \ref{lemma-commutative-diagram-for-local-objects-gerbe-F-alpha} we have
$$F_{g_1}(\gamma(g_2,g_3)) \circ \, \gamma(g_1,g_2g_3)  = \omega(g_1,g_2,g_3) \circ \, \gamma(g_1g_2,g_3) $$ 
where we make the following substitutions:
\end{sloppypar}
\begin{align*}
  \gamma(g_1,g_2g_3) & = c(g_1,g_2g_3) \cdot \left( \alpha_{g_1,g_2}(y) \right) \\
  F_{g_1}(\gamma(g_2,g_3)) &= F_{g_1}(c(g_2,g_3) \cdot \alpha(g_2,g_3)(y)) \\
  \gamma(g_1g_2,g_3) & = c(g_1g_2,g_3) \cdot \left( \alpha_{g_1g_2,g_3}(y) \right) \\
  \omega(g_1,g_2,g_3) &= a(g_1,g_2) \cdot \left( \alpha_{g_1,g_2}
  \star id_{F_{g_3}}\right)(y) 
\end{align*}
Then, we see that (See Lemma \ref{lemma-G-action-on-2-morphisms-same-as-on-global-sections})
$$ \kappa_{F,\alpha}(g_1,g_2,g_3)|_{U} = c(g_1,g_2g_3) + \left(
  g_1 \cdot c(g_2,g_3) \right) - c(g_1g_2,g_3) -
c(g_1,g_2) .$$
This proves the last part of the lemma.
\end{proof}
\begin{proposition}\label{prop-3-cochain-is-cocycle-general-case}
  \begin{sloppypar}
    Suppose $(F,\alpha)$ is the data of a group action by $G$ on a gerbe $\gerbe$ banded by $A$. Then $\kappa_{(F,\alpha)}$ is a cocycle whose cohomology class in ${\cohomology{3}{G}{A(\xdown)}}$ is independent of the connecting $2\mbox{-morphisms}$ $\{\alpha_{g,h}\}_{g,h \in G}$.
  \end{sloppypar}
\end{proposition}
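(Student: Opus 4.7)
The plan is to mirror the argument used for torsors in Proposition \ref{prop-chi-lambda-is-cocycle}. The independence of the cohomology class of $\kappa_{(F,\alpha)}$ from the choice of connecting $2\mbox{-morphisms}$ is already the content of Lemma \ref{lemma-different-connecting-changes-3-cocycle-by-a-coboundary}, so the substantive task is to verify that $\kappa_{(F,\alpha)}$ is a $3\mbox{-cocycle}$.

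First I would construct a cover $\{U_i \to \xdown\}_{i \in I}$ on which the hypotheses of Situation \ref{situation-for-local-objects-for-gerbe-F-alpha} are simultaneously satisfied for every $g \in G$. Since $\gerbe$ is a gerbe, one can refine to a cover over which $\gerbe_{U_i}$ admits an object $y_i$. For each $g \in G$, the objects $y_i$ and $F_g(y_i)$ both lie in $\gerbe_{U_i}$, and by the second axiom of Definition \ref{definition-abelian-gerbe-definition} they become isomorphic after passing to a further cover. Because $G$ is finite, these refinements can be performed successively, producing a single cover over which $y_i \cong F_g(y_i)$ for every $g \in G$ at once.

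On such a cover, Lemma \ref{lemma-locally-kappa-f-alpha-is-coboundary}(3) exhibits $\kappa_{(F,\alpha)}|_{U_i}$ as the coboundary of a $2\mbox{-cochain}$ with values in $A(U_i)$. Since coboundaries are automatically cocycles, the $4\mbox{-cochain}$ $d\kappa_{(F,\alpha)}$ restricts to zero on each $U_i$. But $d\kappa_{(F,\alpha)}$ takes values in $A(\xdown)$, so the sheaf condition on $A$ applied to the cover $\{U_i \to \xdown\}$ forces $d\kappa_{(F,\alpha)}$ to be identically zero, yielding the $3\mbox{-cocycle}$ property.

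The main obstacle I anticipate is the simultaneous local existence of an object $y_i$ together with isomorphisms $y_i \cong F_g(y_i)$ for every $g \in G$; this is where finiteness of $G$ is tacitly used in order to take a common refinement after finitely many steps. Once the appropriate cover is in hand, the argument reduces to the same sheaf-theoretic bookkeeping as for torsors, combined with the local cobordism computation already carried out in Lemma \ref{lemma-locally-kappa-f-alpha-is-coboundary}.
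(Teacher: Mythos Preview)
Your proposal is correct and follows essentially the same approach as the paper: invoke Lemma \ref{lemma-different-connecting-changes-3-cocycle-by-a-coboundary} for the independence statement, then use a cover satisfying Situation \ref{situation-for-local-objects-for-gerbe-F-alpha} together with Lemma \ref{lemma-locally-kappa-f-alpha-is-coboundary} to see that $\kappa_{(F,\alpha)}$ is locally a coboundary and hence globally a cocycle by the sheaf condition. You supply more detail than the paper does on constructing the cover (via successive refinement using finiteness of $G$) and on the passage from local coboundary to global cocycle, but the strategy is identical.
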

\begin{proof}
We have already seen that a different choice of connecting $2\mbox{-morphisms}$ alters $\kappa_{(F,\alpha)}$ by a coboundary (Lemma
\ref{lemma-different-connecting-changes-3-cocycle-by-a-coboundary}). There exists a cover ${\{U_i \to \xdown\}_{i \in I}}$ such that for each $U_i$, the conditions as in Situation \ref{situation-for-local-objects-for-gerbe-F-alpha} are met. By the above Lemma \ref{lemma-locally-kappa-f-alpha-is-coboundary}, we see that $\kappa_{(F,\alpha)}$ is a coboundary on each $U_i$. Therefore, $\kappa_{(F,\alpha)}$ is a cocycle.
\end{proof}
\begin{definition}\label{definition-obstruction-class}
Let $y$ be an $A\mbox{-torsor}$ on $\xdown$ and let $\gerbe$ be a gerbe on $\xdown$ which is banded by $A$. Given an action of the group $G$ on $y$ we refer to the associated cohomology class in ${\cohomology{2}{G}{A(\xdown)}}$ as the obstruction cohomology class. Similarly, given an action of $G$ on $\gerbe$ we refer the associated cohomology class in ${\cohomology{3}{G}{A(\xdown)}}$ as the obstruction cohomology class. 
\end{definition}
\begin{sloppypar}
  Previously, given an $A\mbox{-torsor}$ together with a $G\mbox{-action}$, we defined a pseudo torsor over the invariants. Following is the analogous construction for gerbes. Let $\gerbe$ be a gerbe banded by $A$. Suppose $(F,\alpha)$ is the data of an action of $G$ on $\gerbe$ together with connecting $2\mbox{-morphisms}$. We define a category $\gerbe_{(F,\alpha)}$ as follows. An object of $\gerbe_{(F,\alpha)}$ is a tuple ${(U,y,\{\phi_g\}_{g \in G})}$ where 
\begin{itemize}
\item $U \in \objects(\thesite)$ is an object of $\thesite$.
\item $y \in \objects(\gerbe_{U})$ is an object of $\gerbe$ over $U$. 
\item For $g \in G$, ${\phi_{g}: y \to F_g(y)}$ is a morphism in $\gerbe_{U}$ such that for $g,h \in G$, the following diagram commutes:
\[
  \begin{tikzcd}
    y \arrow{d}[swap]{\phi_{g}} \arrow{r}{\phi_{g \cdot h}} & F_{g \cdot h}(y) \arrow{d}{\alpha_{g,h}} \\
    F_{g}(y) \arrow{r}{F_g(\phi_{h})} & \left(F_{g} \circ F_{h}\right)(y) 
  \end{tikzcd}
\]
\end{itemize}
A morphism ${(U_1,y_1,\{\phi^1_g\}_{g \in G}) \to (U_2,y_2,\{\phi^2_g\}_{g \in G})}$ in $\gerbe_{(F,\alpha)}$ is a pair $(f,\lambda)$, where ${f: U_1 \to U_2}$ is a morphism in $\thesite$, and ${\lambda: y_1 \to y_2}$ is a morphism in $\gerbe$ over $f$ such that the following diagram commutes:
\[
  \begin{tikzcd}
    y_1 \arrow{d}[swap]{\phi^1_g} \arrow{r}{\lambda} & y_2 \arrow{d}{\phi^2_g}  \\
    F_g(y_1) \arrow{r}{F_g(\lambda)} & F_g(y_2)
  \end{tikzcd}
\]
We can pretend that the functors $\{F_g\}_{g \in G}$ commute with the pull back functors (See Appendix I, Section \ref{section-fibered-cat-pullbacks}). Therefore, it is easy to see that the objects of $\gerbe_{(F,\alpha)}$ can be pulled back, making it a fibered category in groupoids over $\thesite$. Furthermore, we see that $\gerbe_{(F,\alpha)}$ is a stack in groupoids. We have a morphism ${\gerbe_{(F,\alpha)} \to \gerbe}$ over $\thesite$ which is given by ${(U,y,\{\phi_g\}_{g \in G}) \to y}$.
\end{sloppypar}
\begin{lemma}\label{lemma-objects-isomorphic-in-gerbe_F_alpha}
  \begin{sloppypar}
    Suppose ${Y_1 = \left(U,y_1,\{\phi^1_{g}\}_{g \in G} \right)}$ and ${Y_2 = \left(U,y_2,\{\phi^2_{g}\}_{g \in G} \right)}$ are objects of $\gerbe_{(F,\alpha)}$ such that there exists an isomorphism ${\lambda: y_1 \to y_2}$ in $\gerbe_{U}$. For $g \in G$, we let $a_g \in A(U)$ denote the scalar such that the following diagram commutes:
  \end{sloppypar}
  \[
  \begin{tikzcd}
    y_1 \arrow{d}[swap]{\lambda} \arrow{r}{\phi^1_g} & F_g(y_1) \arrow{d}{a_g \cdot F_g(\lambda)} \\
    y_2 \arrow{r}{\phi^2_g} & F_g(y_2) 
  \end{tikzcd}
\]
Then $\{a_g\}_{g \in G}$ is a $1\mbox{-cocycles}$ with values in $A(U)$. Moreover, its cohomology class in $\cohomology{1}{G}{A(U)}$ is zero if and only if $Y_1$ and $Y_2$ are isomorphic objects of $\gerbe_{(F,\alpha)}$ over $U$. 
\end{lemma}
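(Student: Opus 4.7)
The plan is to derive the cocycle identity for $\{a_g\}_{g \in G}$ by computing the triple composite $F_g(\phi^2_h) \circ \phi^2_g \circ \lambda$ in two different ways, exploiting the compatibility of $\phi^1$ and $\phi^2$ with the connecting $2$-morphism $\alpha_{g,h}$, and then to deduce the isomorphism criterion by showing that changing $\lambda$ by a band automorphism alters the cochain $\{a_g\}$ by a coboundary.

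For the cocycle relation, I would first push $\lambda$ past $\phi^2_g$ using the defining square for $a_g$, then apply functoriality of $F_g$ together with the fact that $F_g$ acts on the band $A$ as the element $g$ to rewrite $F_g(\phi^2_h \circ \lambda)$, and finally use the defining square for $a_h$ inside $F_g$. This should express the composite as $a_g \cdot (g \cdot a_h) \cdot (F_g F_h)(\lambda) \circ F_g(\phi^1_h) \circ \phi^1_g$. On the other hand, rewriting the same composite via the $\alpha_{g,h}$-compatibility of $\phi^2$, then the defining square for $a_{gh}$, then the naturality of $\alpha_{g,h}$ (through which band elements pass freely, by definition of the band), and finally the $\alpha_{g,h}$-compatibility of $\phi^1$, gives $a_{gh} \cdot (F_g F_h)(\lambda) \circ F_g(\phi^1_h) \circ \phi^1_g$. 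Since the common postcomposition factor is an isomorphism, Lemma \ref{lemma-automorpism-of-functor-is-global-sections} yields $a_{gh} = a_g + g \cdot a_h$, which is the $1$-cocycle condition in additive notation.

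For the second assertion, I would analyze how $\{a_g\}$ depends on the chosen $\lambda$. Any other isomorphism $y_1 \to y_2$ in $\gerbe_U$ has the form $b \cdot \lambda$ for a unique $b \in A(U)$. Using that $F_g(b \cdot \lambda) = (g \cdot b) \cdot F_g(\lambda)$ and that band elements commute past every morphism in $\gerbe$, a short diagram chase shows that the cochain attached to $b \cdot \lambda$ is $\{a_g + b - g \cdot b\}_{g \in G}$, differing from $\{a_g\}$ by the coboundary of $b$. Hence the class in $\cohomology{1}{G}{A(U)}$ is intrinsic to the pair $(Y_1,Y_2)$, and it vanishes precisely when some $b$ makes the modified cochain zero, which by the defining diagram of a morphism in $\gerbe_{(F,\alpha)}$ is exactly the condition that $b \cdot \lambda$ defines an isomorphism $Y_1 \to Y_2$ over $U$.

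The main obstacle will be the careful bookkeeping of three interacting actions: the $G$-action on $A(U)$ from the underlying action, the action of $F_g$ on the band (which is multiplication by $g$), and the commutation of band elements past arbitrary gerbe morphisms. Once these conventions are pinned down, the remaining arguments are routine diagram chases that closely parallel the torsor analogue Lemma \ref{lemma-local-section-commutative-diagram-for-torsors}.
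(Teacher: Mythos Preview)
Your proposal is correct and follows essentially the same route as the paper: both arguments establish the cocycle identity by comparing the two composites $F_g(\phi^j_h)\circ\phi^j_g$ (via the $a_g$, $a_h$ squares and functoriality of $F_g$) against $\alpha_{g,h}\circ\phi^j_{gh}$ (via the $a_{gh}$ square and naturality of $\alpha_{g,h}$), and both handle the isomorphism criterion by replacing $\lambda$ with $b\cdot\lambda$ and observing that this shifts $\{a_g\}$ by a coboundary. One small remark: the cancellation step does not really need Lemma~\ref{lemma-automorpism-of-functor-is-global-sections}; it is just the fact that the band identifies $\aut(F_gF_h(y_1))$ with $A(U)$.
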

\begin{proof}
Let $g,h \in G$ be two elements of the group. We know that $a_h$ is so that the following diagram commutes:
\[
  \begin{tikzcd}
    y_1 \arrow{d}[swap]{\lambda} \arrow{r}{\phi^1_h} & F_h(y_1) \arrow{d}{a_h \cdot F_h(\lambda)} \\
    y_2 \arrow{r}{\phi^2_h} & F_h(y_2) 
  \end{tikzcd}
\]
By applying $F_g$, we conclude that the following diagram commutes as well:
\[
  \begin{tikzcd}[column sep = huge]
    F_g(y_1) \arrow{d}[swap]{F_g(\lambda)} \arrow{r}{F_g(\phi^1_h)} & (F_g \circ F_h)(y_1) \arrow{d}{(g \cdot a_h) \cdot (F_g \circ F_h)(\lambda)} \\
    F_g(y_2) \arrow{r}{F_g(\phi^2_h)} & (F_g \circ F_h)(y_2) 
  \end{tikzcd}
\]
Now we can verify that the following diagram commutes: 
\[
  \begin{tikzcd}[column sep = huge, row sep = huge]
  y_1 \arrow{d}{\lambda} \arrow{r}{\phi^1_{g}} & F_{g}(y_1) \arrow{d}{a_{g} \cdot F_{g}(\lambda)}\arrow{r}{F_g(\phi^1_h)} & (F_g \circ F_h)(y_1)  \arrow{d}{(a_g + g \cdot a_{h}) \cdot (F_g \circ F_h)(\lambda)} \\
  y_2 \arrow{r}{\phi^2_{g}} & F_{g}(y_2) \arrow{r}{F_g(\phi^2_h)} & (F_g \circ F_h)(y_2) 
  \end{tikzcd}
\]
Since ${\alpha_{g,h}: F_{gh} \to F_{g} \circ F_{h}}$ is a base preserving natural transformation, we see that the following diagram commutes:
\[
  \begin{tikzcd}[column sep = huge, row sep = huge]
  y_1 \arrow{d}{\lambda} \arrow{r}{\phi^1_{g \circ h}} & F_{g \circ h}(y_1) \arrow{d}{a_{g \circ h} \cdot F_{g \circ h}(\lambda)}\arrow{r}{\alpha_{g,h}} & (F_g \circ F_h)(y_1)  \arrow{d}{a_{gh} \cdot (F_g \circ F_h)(\lambda)} \\
  y_2 \arrow{r}{\phi^2_{g \circ h}} & F_{g \circ h}(y_2) \arrow{r}{\alpha_{g,h}} & (F_g \circ F_h)(y_2)   
  \end{tikzcd}
\]
\begin{sloppypar}
  For $j = 1 \text{ and } 2$, the two composite morphisms ${y_j \to (F_g \circ F_h)(y_j)}$, namely ${F_g(\phi^j_{h}) \circ \phi^j_{g}}$ and ${\alpha_{g,h}(y_j) \circ \phi^j_{gh}}$, which appear in the above commutative diagrams are equal. Therefore, we conclude that ${a_{gh} = a_g + (g \cdot a_h)}$. Thus, we see that $\{a_g\}_{g \in G}$ is a $1\mbox{-cocycle}$.

If $\{a_g\}_{g \in G}$ is a coboundary, then there exists ${ a \in A(U)} $ such that for $g \in G$, we have ${a_g = (g \cdot a) - a}$. Then we see that 
${a \cdot \lambda: y_1 \to y_2}$ gives an isomorphism of $Y_1$ and $Y_2$ in the fiber category of $\gerbe_{(F,\alpha)}$ over $U$.
\end{sloppypar}
\end{proof}
\begin{lemma}\label{lemma-gerbe-f-alpha-is-psuedo-gerb-when-first-group-cohomology-vanishes-locally}
If the first group cohomology classes locally vanish for the underlying action on $A$, then the category $\gerbe_{(F,\alpha)}$ is a pseudo abelian gerbe over $\xdown$. 
\end{lemma}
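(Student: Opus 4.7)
The plan is to verify the three structural conditions that define a pseudo abelian gerbe for $\gerbe_{(F,\alpha)}$: that it is a stack in groupoids over $\thesite$, that any two objects over a common base are locally isomorphic, and that the automorphism sheaf of any object is abelian. Recall that the missing condition for a genuine (rather than pseudo) gerbe is the existence of local objects, which is precisely what we are not claiming.

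The stack-in-groupoids property has already been indicated in the discussion immediately preceding the lemma: the fact that $\gerbe$ is itself a stack, together with the observation that the commutativity conditions defining objects and morphisms of $\gerbe_{(F,\alpha)}$ are pointwise local conditions, lets one glue descent data. The group-cohomology hypothesis plays no role here.

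The substance of the proof is the local-isomorphism condition, and this is where the hypothesis enters. Given two objects $Y_1 = (U, y_1, \{\phi^1_g\}_{g \in G})$ and $Y_2 = (U, y_2, \{\phi^2_g\}_{g \in G})$ of $\gerbe_{(F,\alpha)}$ over a common $U$, I would first use the gerbe condition on $\gerbe$ to pass to a cover $\{U_i \to U\}$ on which $y_1|_{U_i}$ and $y_2|_{U_i}$ become isomorphic in $\gerbe_{U_i}$; fix such an isomorphism $\lambda_i$. Lemma \ref{lemma-objects-isomorphic-in-gerbe_F_alpha} then attaches to $\lambda_i$ a $1$-cocycle $\{a_g\}_{g \in G}$ in $A(U_i)$ whose class in $\cohomology{1}{G}{A(U_i)}$ is precisely the obstruction to upgrading $\lambda_i$ to an isomorphism $Y_1|_{U_i} \cong Y_2|_{U_i}$ in $\gerbe_{(F,\alpha)}$. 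Invoking the hypothesis that first group cohomology classes locally vanish, I pass to a further refinement $\{V_{ij} \to U_i\}$ over which this class restricts to zero, and Lemma \ref{lemma-objects-isomorphic-in-gerbe_F_alpha} then supplies isomorphisms $Y_1|_{V_{ij}} \cong Y_2|_{V_{ij}}$ in $\gerbe_{(F,\alpha)}$. The composite cover $\{V_{ij} \to U\}$ is the desired one.

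For the automorphism condition, I would identify $\aut(Y)|_U$ with $\invariants|_U$, which is abelian since $A$ is. An automorphism of $Y = (U, y, \{\phi_g\}_{g \in G})$ is a morphism $\lambda : y \to y$ in $\gerbe_U$ satisfying $F_g(\lambda) \circ \phi_g = \phi_g \circ \lambda$ for every $g \in G$. Writing $\lambda$ via the band identification as a scalar $c \in A(U)$, and using that $F_g$ acts on the band of $\gerbe$ by the given action of $g$ on $A$, the automorphism $F_g(\lambda)$ of $F_g(y)$ corresponds to $g \cdot c$. The compatibility relation then translates into the condition $g \cdot c = c$ for all $g$, so $\aut(Y)|_U \cong \invariants|_U$.

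The main obstacle is the local-isomorphism step, since it demands the two successive refinements above: first to trivialize the obstruction in the underlying gerbe $\gerbe$, and then to kill the resulting first group cohomology class on a further refinement. Neither step alone suffices, and it is exactly the combination that requires the hypothesis as stated. The remaining two conditions are essentially formal consequences of the construction of $\gerbe_{(F,\alpha)}$ together with Lemma \ref{lemma-objects-isomorphic-in-gerbe_F_alpha}.
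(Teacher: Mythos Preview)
Your proposal is correct and follows essentially the same approach as the paper's proof: both invoke the already-established stack property, identify the automorphism sheaf of an object with $\invariants|_U$ via the equivariance of $F_g$ on the band, and handle the local-isomorphism condition by first passing to a cover where the underlying $y_i$'s are isomorphic in $\gerbe$ and then refining further using Lemma~\ref{lemma-objects-isomorphic-in-gerbe_F_alpha} together with the local vanishing hypothesis. The only difference is the order in which the conditions are checked.
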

\begin{proof}
  \begin{sloppypar}
    We have already observed that $\gerbe_{(F,\alpha)}$ is a stack in groupoids over $\xdown_{\tau}$. Suppose we have an object ${Y_1 = \left(U,y_1,\{\phi^1_{g}\}_{g \in G} \right)}$ of $\gerbe_{(F,\alpha)}$ over an object $U$ of $\thesite$. Then the automorphisms of $Y_1$ over $U$ are given by isomorphisms $\lambda: y_1 \to y_1$ in $\gerbe_{U}$ such that for $g \in G$, the following diagram commutes: \end{sloppypar}
  \[
  \begin{tikzcd}
    y_1 \arrow{d}[swap]{\phi^1_g} \arrow{r}{\lambda} & y_1 \arrow{d}{\phi^1_g}  \\
    F_g(y_1) \arrow{r}{F_g(\lambda)} & F_g(y_1)
  \end{tikzcd}
\]
The morphism $F_g$ is equivariant with respect to the action of $g$ on the band $A$. Thus, we see that the isom sheaf $\isom(Y_1,Y_1)$ on $\thesite/U$ is isomorphic to ${\invariants}|_U$ and in particular, it is abelian. Suppose ${Y_2 = \left(U,y_2,\{\phi^2_{g}\}_{g \in G} \right)}$ is another object of $\gerbe_{(F,\alpha)}$ over $U$. Since $\gerbe$ is a gerbe, there exists a cover of $U$ where the objects $y_1$ and $y_2$
are isomorphic. If the first group cohomology classes locally vanish, then it follows from Lemma \ref{lemma-objects-isomorphic-in-gerbe_F_alpha} that there exists a further refinement of this cover where $Y_1$ and $Y_2$ are isomorphic in $\gerbe_{(F,\alpha)}$. 
\end{proof}
\begin{lemma}\label{lemma-2-cocycle-obstruction-for-local-objects-for-gerbe-F-alpha}
  \begin{sloppypar}
    Let $U \in \objects(\thesite)$ be an object over which $\gerbe$ admits a section $y$ in $\gerbe_{U}$ such that the objects $\{F_g(y)\}_{g \in G}$ are all isomorphic to $y$ in $\gerbe_U$. Suppose the restriction of the $3\mbox{-cochain}$ $\kappa_{(F,\alpha)}$ to $U$ is identically zero. Then, there is a cohomology class in ${\cohomology{2}{G}{A(U)}}$ which is zero if and only if there exists a choice of isomorphisms ${\{\psi_g: y \to F_g(y)\}_{g \in G}}$ such that ${(U,y,\{\psi_g\}_{g \in G})}$ is a valid object of $\gerbe_{(F,\alpha)}$ over $U$.
  \end{sloppypar}
\end{lemma}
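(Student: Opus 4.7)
The plan is to imitate the torsor argument of Lemma~\ref{lemma-local-cohomology-obstruction-for-local-sections-of-y-lambda}: pick arbitrary isomorphisms, form the associated 2-cochain, observe that it is a cocycle whose cohomology class does not depend on the choices, and identify its vanishing with the existence of the desired object of $\gerbe_{(F,\alpha)}$.

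First, using the hypothesis that $F_g(y)$ is isomorphic to $y$ in $\gerbe_U$ for every $g \in G$, I would fix an arbitrary choice of isomorphisms $\phi_g : y \to F_g(y)$, putting us in Situation~\ref{situation-for-local-objects-for-gerbe-F-alpha}, and form the resulting 2-cochain $\{c(g,h)\}_{g,h \in G}$ with values in $A(U)$. Part (3) of Lemma~\ref{lemma-locally-kappa-f-alpha-is-coboundary} shows that the coboundary of $\{c(g,h)\}$ equals the restriction of $\kappa_{(F,\alpha)}$ to $U$, which vanishes by hypothesis; hence $\{c(g,h)\}$ is a 2-cocycle. Part (1) of the same lemma shows that replacing $\{\phi_g\}$ by $\{a_g \cdot \phi_g\}$, for any 1-cochain $\{a_g\}$, alters the cocycle by the coboundary of $\{a_g\}$. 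Thus the cohomology class $[c] \in \cohomology{2}{G}{A(U)}$ is well defined and depends only on the data in the lemma; this is our candidate obstruction.

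Next, I would identify $[c]$ as the obstruction in question. Unpacking the definition of $\gerbe_{(F,\alpha)}$, a tuple $(U, y, \{\psi_g\}_{g \in G})$ is a valid object precisely when $F_g(\psi_h) \circ \psi_g = \alpha_{g,h}(y) \circ \psi_{gh}$ for all $g, h \in G$; equivalently, the cochain $c_\psi(g,h)$ produced from $\{\psi_g\}$ via Situation~\ref{situation-for-local-objects-for-gerbe-F-alpha} is identically zero. If such a $\{\psi_g\}$ exists, then $[c] = [c_\psi] = 0$. Conversely, if $[c] = 0$, write $c(g,h) = (g \cdot a_h) - a_{gh} + a_g$ for some 1-cochain $\{a_g\}$ in $A(U)$ and set $\psi_g := (-a_g) \cdot \phi_g$. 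By the explicit effect of modifying $\{\phi_g\}$ recorded in the proof of part (1) of Lemma~\ref{lemma-locally-kappa-f-alpha-is-coboundary}, the cochain associated to $\{\psi_g\}$ is $c$ minus the coboundary of $\{a_g\}$, namely zero, so $(U, y, \{\psi_g\})$ is a valid object.

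The only delicate points are the bookkeeping of sign conventions between the coboundary formula coming from Lemma~\ref{lemma-locally-kappa-f-alpha-is-coboundary} and the modification $\psi_g = (-a_g) \cdot \phi_g$, and the observation (which we do not strictly need since $y$ is fixed, but which follows from part (2) of that same lemma) that $[c]$ depends on $y$ only through its isomorphism class in $\gerbe_U$. Everything else is a direct consequence of earlier results.
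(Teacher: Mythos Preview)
Your proposal is correct and follows essentially the same route as the paper's own proof: both place the situation in Situation~\ref{situation-for-local-objects-for-gerbe-F-alpha}, invoke part~(3) of Lemma~\ref{lemma-locally-kappa-f-alpha-is-coboundary} to see that the resulting $2$-cochain $\{c(g,h)\}$ is a cocycle once $\kappa_{(F,\alpha)}|_U$ vanishes, use part~(1) for well-definedness of its class, and identify vanishing of the class with the existence of scalars $\{a_g\}$ making $(U,y,\{a_g\cdot\phi_g\})$ a legitimate object of $\gerbe_{(F,\alpha)}$. Your write-up is slightly more explicit about the sign bookkeeping in the last step, but there is no substantive difference.
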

\begin{proof}
  \begin{sloppypar}
    We choose isomorphisms ${\{\phi_g: y \to F_g(y)\}_{g \in G}}$ and define scalars $\{c(g,h)\}_{g,h}$ in $A(U)$ as in Situation \ref{situation-for-local-objects-for-gerbe-F-alpha}. If the $3\mbox{-cochain}$ $\kappa_{(F,\alpha)}$ vanishes on $U$, then by Lemma \ref{lemma-locally-kappa-f-alpha-is-coboundary} we see that $\{c(g,h)\}_{g,h \in G}$ is a cocycle. We also conclude from Lemma \ref{lemma-locally-kappa-f-alpha-is-coboundary} that its cohomology class is zero if and only if there exist scalars $\{a_g\}_{g \in G}$ in $A(U)$ such that ${(U,y,\{a_g \cdot \phi_g \})}$ is a legitimate object of $\gerbe_{(F,\alpha)}$ over $U$.
  \end{sloppypar}
\end{proof}
\begin{lemma}\label{lemma-when-gerbe-f-alpha-is-legit-gerbe}
Assume that the first two group cohomology classes locally vanish for the underlying action of $G$ on $A$. Then $\gerbe_{(F,\alpha)}$ is an abelian gerbe over $\xdown$ if and only if the associated $3\mbox{-cocycle}$ $\kappa_{(F,\alpha)}$ is identically zero.
\end{lemma}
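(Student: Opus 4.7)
The plan is to leverage the preceding two lemmas. By Lemma \ref{lemma-gerbe-f-alpha-is-psuedo-gerb-when-first-group-cohomology-vanishes-locally}, the hypothesis that first group cohomology classes locally vanish already ensures $\gerbe_{(F,\alpha)}$ is a pseudo abelian gerbe over $\xdown$; the only axiom left to examine is the local nonemptiness of the fiber categories. Thus the lemma reduces to the claim: local sections of $\gerbe_{(F,\alpha)}$ exist over every object of $\thesite$ if and only if $\kappa_{(F,\alpha)}$ is the zero cochain.

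For the forward direction, assume $\gerbe_{(F,\alpha)}$ is a gerbe. Let $U$ be any object of $\thesite$; after choosing a cover $\{U_i \to U\}_{i \in I}$ on which $\gerbe_{(F,\alpha)}$ admits objects, pick $(U_i, y_i, \{\phi_g\}_{g \in G})$ in the fiber over each $U_i$. The commutativity axiom defining objects of $\gerbe_{(F,\alpha)}$ says exactly that $F_g(\phi_h) \circ \phi_g = \alpha_{g,h}(y_i) \circ \phi_{gh}$, so in the notation of Situation \ref{situation-for-local-objects-for-gerbe-F-alpha} we have $\gamma(g,h) = \alpha_{g,h}(y_i)$ and consequently $c(g,h) = 0$ for all $g,h \in G$. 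By Lemma \ref{lemma-locally-kappa-f-alpha-is-coboundary}(3), the restriction of $\kappa_{(F,\alpha)}$ to $U_i$ is the coboundary of the zero $2$-cochain, which is zero. Since this holds on a cover of $\xdown$ and $A$ is a sheaf, $\kappa_{(F,\alpha)}$ vanishes identically as a cochain with values in $A(\xdown)$.

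For the converse, assume $\kappa_{(F,\alpha)} \equiv 0$ and fix an object $U$ of $\thesite$. Since $\gerbe$ is a gerbe, after passing to a cover of $U$ we may assume there is an object $y \in \objects(\gerbe_U)$. For each $g \in G$ the objects $y$ and $F_g(y)$ in $\gerbe_U$ are locally isomorphic, so by finiteness of $G$ we may intersect the resulting finitely many refinements to arrive at a single cover on which $F_g(y)|_{\bullet} \cong y|_{\bullet}$ holds for \emph{all} $g \in G$ simultaneously. On any member $V$ of this cover the hypotheses of Lemma \ref{lemma-2-cocycle-obstruction-for-local-objects-for-gerbe-F-alpha} are met, and since $\kappa_{(F,\alpha)}|_V = 0$, that lemma provides an obstruction class in $\cohomology{2}{G}{A(V)}$ which vanishes if and only if $y$ can be promoted to an object of $\gerbe_{(F,\alpha)}$ over $V$. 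Applying the assumption that second group cohomology classes locally vanish, there is a further refinement of $V$ on which this class is trivial, producing the desired local section.

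The only real subtlety is the bookkeeping of nested refinements in the converse direction, namely (a) reducing to $U$ where $\gerbe$ admits a section $y$, (b) shrinking so that all $F_g(y)$ become isomorphic to $y$ for $g \in G$ (which uses finiteness of $G$ in an essential way), and (c) killing the resulting class in $\cohomology{2}{G}{A(-)}$ by local vanishing. No new ideas are needed beyond the preceding lemmas; combining these three refinement steps into a single cover of $\xdown$ completes the proof.
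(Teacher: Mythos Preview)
Your proof is correct and follows essentially the same route as the paper: reduce to local nonemptiness via Lemma \ref{lemma-gerbe-f-alpha-is-psuedo-gerb-when-first-group-cohomology-vanishes-locally}, use Lemma \ref{lemma-locally-kappa-f-alpha-is-coboundary} for the forward direction, and for the converse refine successively so that Lemma \ref{lemma-2-cocycle-obstruction-for-local-objects-for-gerbe-F-alpha} applies and then kill the resulting $\cohomology{2}{G}{A(-)}$ class by local vanishing. Your exposition is in fact more detailed than the paper's (you spell out why $c(g,h)=0$ on a local object and why finiteness of $G$ lets you pass to a common refinement), but the argument is the same.
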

\begin{proof}
Suppose $U \in \objects(\thesite)$ is such that the fiber of $\gerbe_{(F,\alpha)}$ over $U$ is non-empty. Then we see from Lemma \ref{lemma-locally-kappa-f-alpha-is-coboundary} that $\kappa_{(F,\alpha)}$ vanishes on $U$. Therefore, if $\gerbe_{(F,\alpha)}$ admits local sections, we conclude that $\kappa_{(F,\alpha)}$ vanishes everywhere. 

Conversely, suppose $\kappa_{(F,\alpha)}$ is identically zero. If the first cohomology classes locally vanish, then by Lemma \ref{lemma-gerbe-f-alpha-is-psuedo-gerb-when-first-group-cohomology-vanishes-locally} we know that $\gerbe_{(F,\alpha)}$ is a pseudo abelian gerbe. Since $\gerbe$ is a gerbe, there exists a cover ${\{U_i \to \xdown\}}$ of $\xdown$ together with sections $y_i$ in $\gerbe_{U_i}$ such that the conditions as in Situation \ref{situation-for-local-objects-for-gerbe-F-alpha} are met for each $U_i$. Then by
Lemma \ref{lemma-2-cocycle-obstruction-for-local-objects-for-gerbe-F-alpha}, there is a cohomology class in ${\cohomology{2}{G}{A(U_i)}}$  which is zero if and only if $y_i$ gives an object of $\gerbe_{(F,\alpha)}$ over $U_i$. If the second group cohomology classes locally vanish, then there exists a refinement of this cover over which the pullbacks of the sections $y_i$ produce sections of $\gerbe_{(F,\alpha)}$. 
\end{proof}
\begin{sloppypar}
  The constructions of the torsor $y_{\lambda}$ and the gerbe $\gerbe_{(F,\alpha)}$ are functorial in the following sense. Let ${f: A_1 \to A_2}$ me a morphism of abelian sheaves on $\thesite$.
  \begin{remark}\label{remark-functoriality-of-y-alpha-gerbe-f-constructions} Suppose we have a $G\mbox{-equivariant}$ map of torsors $y_1 \to y_2$ as in Definition \ref{def-G-equivariant-morphism-of-torsors}. We have the following commutative diagram of abelian sheaves:
    \[
      \begin{tikzcd}
        A_1^{G} \arrow{d}{i_1} \arrow{r}{f^G} & A_2^{G} \arrow{d}{i_2} \\
        A_1 \arrow{r}{f} & A_2
      \end{tikzcd}
    \]
    Then there is a map ${{y_1}_{\lambda^1} \to {y_2}_{\lambda^2}}$ of pseudo-torsors which is  $f^{G}\mbox{-equivariant}$.
  \end{remark}
\noindent The analogous result for gerbes is the following:
\end{sloppypar}
\begin{lemma}\label{lemma-functoriality-of-gerbe-f-alpha}
  \begin{sloppypar}
    Let ${F: \gerbe_1 \to \gerbe_2}$ be an $f\mbox{-equivariant}$ morphism of gerbes which is ${G\mbox{-equivariant}}$ as in Definition \ref{def-G-equivariant-morphism-of-gerbes}. Let $\{\alpha^1_{g,h}\}_{g,h \in G}$ be a choice of connecting $2\mbox{-morphisms}$ for the action on $\gerbe_1$. Let ${\gamma_g: F \circ F^1_g \to F^2_g \circ F}$ be a choice of a $2\mbox{-morphism}$. Let $\{\alpha^2_{g,h}\}_{g,h \in G}$ be the uniquely determined connecting $2\mbox{-morphisms}$ for $\gerbe_2$ as in Proposition \ref{prop-g-equivariant-map-and-3-cochain}. Then there is a morphism
    ${{\gerbe_1}_{(F,\alpha^1)} \to {\gerbe_2}_{(F,\alpha^2)}}$
    of fibered categories over $\xdown_{\tau}$ which fits in the $2\mbox{-commutative}$ diagram below:
  \end{sloppypar}
  \[
  \begin{tikzcd}
    {\gerbe_1}_{(F,\alpha^1)} \arrow{r}\arrow{d}{} & {\gerbe_2}_{(F,\alpha^2)} \arrow{d}{} \\
    \gerbe_1 \arrow{r}{F} & \gerbe_2
  \end{tikzcd}
\]
\end{lemma}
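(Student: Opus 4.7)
The plan is to construct the desired functor explicitly on objects and morphisms, and then verify the two coherence conditions plus the $2$-commutativity of the square with the projections. On objects, I will send
\[
(U,y,\{\phi_g\}_{g \in G}) \;\longmapsto\; (U,F(y),\{\psi_g\}_{g \in G}),
\]
where $\psi_g : F(y) \to F_g^2(F(y))$ is defined as the composite $\gamma_g(y) \circ F(\phi_g)$. On morphisms $(f,\lambda): (U_1,y_1,\{\phi^1_g\}) \to (U_2,y_2,\{\phi^2_g\})$, I will send $(f,\lambda)$ to $(f,F(\lambda))$. The projection back down to $\gerbe_2$ is then literally $F$ composed with the projection from ${\gerbe_1}_{(F,\alpha^1)}$, so the square in the lemma strictly commutes, a fortiori $2$-commutes.

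The first verification is that $(U,F(y),\{\psi_g\})$ is indeed an object of ${\gerbe_2}_{(F,\alpha^2)}$, i.e. that for all $g,h \in G$ the square
\[
\begin{tikzcd}[column sep=large]
F(y) \arrow{d}[swap]{\psi_g} \arrow{r}{\psi_{gh}} & F^2_{gh}(F(y)) \arrow{d}{\alpha^2_{g,h}(F(y))} \\
F^2_g(F(y)) \arrow{r}{F^2_g(\psi_h)} & (F^2_g \circ F^2_h)(F(y))
\end{tikzcd}
\]
commutes. Expanding $\psi$ in terms of $\gamma$ and $F(\phi)$ and inserting the intermediate objects $F(F^1_{gh}(y))$ and $F(F^1_g F^1_h(y))$, this diagram splits into three subdiagrams: one obtained by applying $F$ to the compatibility of $\{\phi_g\}$ with $\alpha^1_{g,h}$, one that is the naturality square for $\gamma_g$ applied to $\phi_h$, and one that is the characterizing commutative diagram for $\alpha^2_{g,h}$ in the proof of Proposition \ref{prop-g-equivariant-map-and-3-cochain} (with $\epsilon_g$ playing the role of $\gamma_g$ and $\epsilon(g,h) = (id_{F^2_g} \star \gamma_h) \circ (\gamma_g \star id_{F^1_h})$). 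Pasting the three gives exactly the required identity. This diagram chase is where the uniqueness assertion in Proposition \ref{prop-g-equivariant-map-and-3-cochain} is essential, and is the main technical obstacle; the point is that $\alpha^2_{g,h}$ was defined precisely so that this pasting works.

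The second verification is that $F(\lambda)$ is a legitimate morphism in ${\gerbe_2}_{(F,\alpha^2)}$, i.e. that it intertwines $\psi_g^1$ and $\psi_g^2$. This is immediate: applying $F$ to the morphism-compatibility square for $\lambda$ with respect to $\{\phi^1_g\}$ and $\{\phi^2_g\}$ gives the square involving $F(\phi^j_g)$, and pasting this with the naturality square of $\gamma_g$ along $\lambda$ yields the needed square for $\psi^j_g$. Functoriality in $(f,\lambda)$ and compatibility with a chosen cleavage are both inherited from the functoriality of $F$ together with the fact that the $F_g^j$ can be taken to commute with pullbacks (Appendix I, Section \ref{section-fibered-cat-pullbacks}). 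Thus the assignment above defines a morphism of fibered categories over $\thesite$, and by construction its composition with the projection to $\gerbe_2$ agrees on the nose with $F$ composed with the projection from ${\gerbe_1}_{(F,\alpha^1)}$ to $\gerbe_1$, completing the proof.
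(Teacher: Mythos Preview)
Your proposal is correct and follows exactly the same approach as the paper: the paper defines the functor on objects by $(U,y,\{\phi_g\}) \mapsto (U,F(y),\{\gamma_g(y)\circ F(\phi_g)\})$ and then simply asserts that it is ``straightforward to verify'' that this is a valid object of ${\gerbe_2}_{(F,\alpha^2)}$. Your write-up supplies the details of that verification (the three-subdiagram pasting and the check on morphisms) that the paper omits, so there is nothing to correct.
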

\begin{proof}
\begin{sloppypar}
Suppose $(U,y,\{\phi_g\}_{g \in G})$ is an object of ${\gerbe_1}_{(F,\alpha^1)}$ over an object $U$ in $\thesite$. Then for $g \in G$, we have the morphism
\[
  \begin{tikzcd}
    F(y) \arrow{r}{F(\phi_g)} & (F \circ F^{1}_{g})(y) \arrow{r}{\gamma_g(y)} & (F^{2}_{g} \circ F)(y).
  \end{tikzcd}
\]
It is straightforward to verify that ${(U,F(y),\{\gamma_g(y) \circ F(\phi_g)\}_{g \in G})}$ is a valid object of ${\gerbe_2}_{(F,\alpha^2)}$ over $U$. Thus, we have a morphism ${{\gerbe_1}_{(F,\alpha^1)} \to {\gerbe_2}_{(F,\alpha^2)}}$ defined by
\end{sloppypar}
$$ (U,y,\{\phi_g\}_{g \in G})  \to (U,F(y),\{\gamma_g(y) \circ F(\phi_g)\}_{g \in G}). $$
\end{proof}
\begin{corollary}\label{corollary-change-connecting-morphisms-gives-same-gerbe-f-alpha}
  \begin{sloppypar}
    Suppose we have two actions of $G$ on a gerbe $\gerbe$ banded by $A$ that are given by morphisms $\{F_g\}_{g \in G}$ and $\{F'_g\}_{g \in G}$. Suppose for $g \in G$, we have a $2\mbox{-isomorphism}$ ${\gamma_g: F_g \to F'_g}$. Let $\{\alpha_{g,h}\}_{g,h \in G}$ be a choice of connecting $2\mbox{-morphisms}$ for the first action. We fix the connecting $2\mbox{-morphisms}$ for the second action such that the following diagram commutes:
  \end{sloppypar}
   \[ 
\begin{tikzcd}
       F'_{gh} \arrow{d}{\gamma_{gh}} \arrow{r}{\alpha'_{g,h}} & F'_g \circ F'_h \arrow{d}{\gamma_g \star \gamma_h} \\
       F_{gh} \arrow{r}{\alpha_{g,h}} & F_g \circ F_h 
     \end{tikzcd}
 \]
Then the categories $\gerbe_{(F,\alpha)}$ and $\gerbe_{(F',\alpha')}$ are isomorphic over $\thesite$. 
\end{corollary}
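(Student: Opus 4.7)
The plan is to apply Lemma \ref{lemma-functoriality-of-gerbe-f-alpha} to the identity morphism $\mathrm{id}_{\gerbe}: \gerbe \to \gerbe$, viewed as an $f$-equivariant morphism (with $f = \mathrm{id}_A$) between two copies of $\gerbe$ carrying the actions $\{F_g\}_{g \in G}$ and $\{F'_g\}_{g \in G}$, respectively. The identity is $G$-equivariant in the sense of Definition \ref{def-G-equivariant-morphism-of-gerbes} because the required square $2$-commutes through $\gamma_g$; in the notation of Lemma \ref{lemma-functoriality-of-gerbe-f-alpha}, $\gamma_g$ plays the role of the chosen $2$-morphism $F \circ F^1_g \to F^2_g \circ F$. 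The uniquely determined connecting $2$-morphisms $\alpha'_{g,h}$ produced by Proposition \ref{prop-g-equivariant-map-and-3-cochain} then satisfy a diagram which, after the identifications $F = \mathrm{id}_\gerbe$, $F^1_g = F_g$, $F^2_g = F'_g$, coincides with the compatibility diagram hypothesized in the corollary (up to reversing the vertical arrows by passing to inverses). Hence Lemma \ref{lemma-functoriality-of-gerbe-f-alpha} supplies a morphism of fibered categories
$$ \Phi : \gerbe_{(F,\alpha)} \longrightarrow \gerbe_{(F',\alpha')}, \qquad (U, y, \{\phi_g\}_{g \in G}) \longmapsto (U, y, \{\gamma_g(y) \circ \phi_g\}_{g \in G}), $$
acting as the identity on the underlying object $y$ and on morphisms $\lambda : y_1 \to y_2$.

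To upgrade this to an isomorphism, I apply the same lemma with the roles of the two actions swapped and with $\gamma_g$ replaced by its inverse $\gamma_g^{-1} : F'_g \to F_g$. The swapped compatibility square still commutes, essentially because horizontal composition of $2$-isomorphisms inverts pointwise, i.e.\ $(\gamma_g \star \gamma_h)^{-1} = \gamma_g^{-1} \star \gamma_h^{-1}$; this yields a morphism
$$ \Psi : \gerbe_{(F',\alpha')} \longrightarrow \gerbe_{(F,\alpha)}, \qquad (U, y, \{\phi'_g\}_{g \in G}) \longmapsto (U, y, \{\gamma_g(y)^{-1} \circ \phi'_g\}_{g \in G}). $$
Both compositions $\Phi \circ \Psi$ and $\Psi \circ \Phi$ are equal to the respective identity functors \emph{on the nose} (not merely $2$-isomorphic to them), because they act trivially on underlying objects and on morphisms, and because $\gamma_g(y) \circ \gamma_g(y)^{-1} = \mathrm{id}_{F'_g(y)}$ and $\gamma_g(y)^{-1} \circ \gamma_g(y) = \mathrm{id}_{F_g(y)}$. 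This exhibits $\Phi$ as an isomorphism of fibered categories over $\thesite$.

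The main obstacle is the first step: correctly translating the compatibility diagram stated in the corollary into the hypothesis of Lemma \ref{lemma-functoriality-of-gerbe-f-alpha}. One has to keep track of the direction of $\gamma_g$ (the corollary and the lemma use opposite orientations for the vertical arrows), and verify that the compatibility with horizontal composition $\gamma_g \star \gamma_h$ is exactly what Proposition \ref{prop-g-equivariant-map-and-3-cochain} requires. After this identification the remainder is purely formal: functoriality on morphisms follows from the naturality of $\gamma_g$, which gives $F'_g(\lambda) \circ \gamma_g(y_1) = \gamma_g(y_2) \circ F_g(\lambda)$ for every $\lambda : y_1 \to y_2$, and invertibility is immediate from the reversibility of $\gamma_g$.
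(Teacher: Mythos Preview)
Your proof is correct and follows essentially the same approach as the paper: apply Lemma \ref{lemma-functoriality-of-gerbe-f-alpha} to the identity morphism of $\gerbe$ (which is $G$-equivariant via the $\gamma_g$), and obtain the inverse by running the same argument with the $2$-isomorphisms $\gamma_g^{-1}$. You have simply been more explicit than the paper about the formula for $\Phi$, the orientation bookkeeping for $\gamma_g$, and the on-the-nose verification that $\Phi$ and $\Psi$ are mutually inverse.
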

\begin{proof}
  \begin{sloppypar}
    In this situation, the identity map of $\gerbe$ is a $G\mbox{-equivariant}$ morphism with respect to the two actions. With the choice of $2\mbox{-isomorphisms}$ $\{\gamma_g\}_{g \in G}$, we apply the above Lemma \ref{lemma-functoriality-of-gerbe-f-alpha} to see that we have a morphism ${\gerbe_{(F,\alpha)} \to \gerbe_{(F,\alpha')}}$ of categories over $\thesite.$ The inverse morphism is given by considering the inverses of the $2\mbox{-morphisms}$ $\{\gamma_g\}_{g \in G}$ and then applying the same lemma.
  \end{sloppypar}
\end{proof}
We fix an action of the group $G$ on the abelian sheaf $A$. Let $\invariants \xrightarrow{i} A$ denote the sheaf of invariants.
\begin{definition}\label{def-G-action-on-torsors-and-gerbes}
\begin{sloppypar}
If $y$ is an $A\mbox{-torsor}$, we denote by $T_g(y)$ the $A\mbox{-torsor}$ which as a sheaf of sets is the same as $y$, but the action of the scalars is  twisted by $g^{-1}$. That is, the torsor structure on $T_g(y)$ is given by the composite map below:
  \[
    \begin{tikzcd}[column sep = large]
      A \times y \arrow{r}{g^{-1} \times id_y} & A \times y \arrow{r}{} & y
    \end{tikzcd}
  \] 
Similarly, if $\gerbe$ is an abelian gerbe banded by $A$, then we denote by $T_g(\gerbe)$ (ignoring a slight abuse of notation) the gerbe, which as a fibred category over $\thesite$ is the same as $\gerbe$ but where we twist the isomorphism ${\text{Band}(\gerbe) \to A}$ by $g^{-1}$. It is easy to see that an $A\mbox{-equivariant}$ morphism to $T_g(y)$ (resp. to $T_g(\gerbe)$) is the same as a morphism to $y$ (resp. to $\gerbe$) which is equivariant with respect to the map ${A \xrightarrow{a \to (g^{-1} \cdot a) } A}$.
\begin{remark}\label{remark-tg-operators-define-action-on-sheaf-cohomology}
  \begin{sloppypar}
The operators $\{T_g\}_{g \in G}$ describe the induced action of $G$ on the sheaf cohomology groups $\cohomology{1}{\xdown}{A}$ and  $\cohomology{2}{\xdown}{A}$ in terms of torsors and gerbes. It is easy to see that an $A\mbox{-torsor}$ or a gerbe banded banded by $A$ represents a $G\mbox{-stable}$ sheaf cohomology class if and only if it admits a lift of the $G\mbox{-action}$. 
  \end{sloppypar}
\end{remark}
  \end{sloppypar}
\end{definition}
\begin{proposition}\label{prop-induced-torsor-and-gerbes}
  \begin{sloppypar}
    Let $y$ be an $A\mbox{-torsor}$ on $\xdown$ which is induced from the invariants. Then $y$ admits a lift of the underlying action on $A$ such that the associated obstruction class in ${\cohomology{2}{G}{A(\xdown)}}$ is zero. Similarly, if $\gerbe$ is a gerbe banded by $A$ which is induced from the invariants then $\gerbe$ admits a lift of the action for which the associated obstruction class in ${\cohomology{3}{G}{A(\xdown)}}$ is zero. More concretely, we have the following:
    \begin{itemize}
      \begin{sloppypar}
      \item Let $\overline{y}$ be an $\invariants\mbox{-torsor}$ with a morphism ${f:\overline{y} \to y }$ that is $i\mbox{-equivariant}$. Then there are unique automorphisms $\{\lambda_g\}_{g \in G}$ of $y$ that lift the $G\mbox{-action}$ such that ${\lambda_g \circ f = f}$ for all $g \in G$. The associated cochain $\chi_{\lambda}$ for this action on $y$ is zero.
      \item Let $\overline{\gerbe}$ be an abelian gerbe  banded by $\invariants$ with an $i\mbox{-equivariant}$ morphism ${F: \overline{\gerbe} \to \gerbe }$. Then there are unique (up-to $2\mbox{-isomorphisms}$) automorphisms ${\{F_g\}_{g \in G}}$ of $\gerbe$ that lift the $G\mbox{-action}$ such that the morphisms $F_g \circ F$ and $F$ are $2\mbox{-isomorphic}$ for all $g \in G$. The associated obstruction cohomology class $\kappa_F$ for this action on $\gerbe$ is zero.
      \end{sloppypar}
    \end{itemize}
  \end{sloppypar}
\end{proposition}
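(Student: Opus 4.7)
The plan is to use the universal properties from Proposition \ref{prop-hom-bands-gerbe-construction} to construct the lifts, and then to deduce vanishing of the obstruction classes. The observation that drives everything is that since $i: \invariants \to A$ factors through the $G$-invariants, we have $g^{-1} \circ i = i$ for every $g \in G$. Consequently any $i$-equivariant morphism remains $i$-equivariant when its target is replaced by its twist $T_g(-)$.

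For the torsor part, view the given $f : \overline{y} \to y$ as an $i$-equivariant morphism $\overline{y} \to T_g(y)$ and apply Proposition \ref{prop-hom-bands-gerbe-construction}(1) to produce a unique $A$-equivariant morphism $\lambda_g : y \to T_g(y)$ with $\lambda_g \circ f = f$. As a self-map of $y$, $\lambda_g$ is equivariant with respect to the given action of $g$ on $A$, so $\{\lambda_g\}_{g \in G}$ lifts the underlying $G$-action. For every $g, h \in G$, both $\lambda_{gh}$ and $\lambda_g \circ \lambda_h$ are $A$-equivariant maps $y \to T_{gh}(y)$ whose precomposition with $f$ equals $f$, so by the uniqueness clause in Proposition \ref{prop-hom-bands-gerbe-construction}(1) they must coincide; hence $\chi_{\lambda}(g,h) = 0$ identically. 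Uniqueness of each $\lambda_g$ is itself part of the same proposition.

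The gerbe case proceeds in parallel but $2$-categorically. Applying Proposition \ref{prop-hom-bands-gerbe-construction}(2) to the $i$-equivariant morphism $F : \overline{\gerbe} \to T_g(\gerbe)$ produces $F_g : \gerbe \to \gerbe$, unique up to $2$-isomorphism, such that $F_g \circ F \simeq F$ and whose induced map on the band $A$ is the given action of $g$. For $g, h \in G$, both $F_{gh}$ and $F_g \circ F_h$, viewed as endomorphisms of $\gerbe$ inducing the common band action and satisfying $(-) \circ F \simeq F$, fit into the uniqueness clause of Proposition \ref{prop-hom-bands-gerbe-construction}(2); this supplies a $2$-isomorphism $\alpha_{g,h} : F_{gh} \to F_g \circ F_h$, giving the data $(F, \alpha)$ of a $G$-action in the sense of Definition \ref{def-group-action-on-gerbe}. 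To show $\kappa_F = 0$, equip $\overline{\gerbe}$ with the trivial $G$-action $F^1_g := id_{\overline{\gerbe}}$, $\alpha^1_{g,h} := id$; the corresponding $3$-cochain vanishes identically, and the condition for $F$ to be $G$-equivariant in the sense of Definition \ref{def-G-equivariant-morphism-of-gerbes} reduces exactly to $F_g \circ F \simeq F$, which holds by construction. Proposition \ref{prop-g-equivariant-map-and-3-cochain} then yields $\kappa_F = i \circ \kappa_{F^1} = 0$.

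The main obstacle is the $2$-categorical bookkeeping for the gerbe part: one must check that the $F_g$'s (only defined up to $2$-isomorphism) genuinely assemble into a $G$-action, and that the obstruction class $\kappa_F$ is independent of all the intermediate choices. Fortunately this is already packaged into the earlier infrastructure — Corollary \ref{cor-3-cochain-depends-only-on-2-iso-class-upto-coboundary} handles different representatives of the $F_g$'s, Lemma \ref{lemma-different-connecting-changes-3-cocycle-by-a-coboundary} handles different connecting $2$-morphisms, and Proposition \ref{prop-g-equivariant-map-and-3-cochain} transports obstruction classes along $G$-equivariant morphisms — so the proof amounts to invoking these results in the correct order.
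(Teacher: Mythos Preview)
Your proof is correct and follows essentially the same approach as the paper's: construct the lifts $\lambda_g$ (resp.\ $F_g$) via the universal property of Proposition~\ref{prop-hom-bands-gerbe-construction}, use uniqueness to force $\lambda_g \circ \lambda_h = \lambda_{gh}$ (resp.\ $F_g \circ F_h \simeq F_{gh}$), and for gerbes invoke Proposition~\ref{prop-g-equivariant-map-and-3-cochain} with the trivial $G$-action on $\overline{\gerbe}$. The only discrepancy is a convention mismatch in your twist direction (you write $T_g$ where the paper writes $T_{g^{-1}}$), but this is purely notational and does not affect the argument.
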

\begin{proof}
  \begin{sloppypar}
    For $g \in G$, we have a map of sheaves ${f: \overline{y} \to T_{g^{-1}}(y) }$ which is also an $i\mbox{-equivariant}$ map of torsors. By Proposition \ref{prop-hom-bands-gerbe-construction}, there is a unique morphism ${\lambda_g: y \to y}$ which is equivariant with respect the action of $g$ on $A$ such that ${\lambda_g \circ f = f}$. For $g,h \in G$, we have two morphisms, $\lambda_{gh}$ and ${\lambda_{g} \circ \lambda_{h}}$, which are equivariant with respect to the action by $gh$. Moreover, we have ${\lambda_{gh} \circ f = f}$ and ${\lambda_{g} \circ \lambda_{h} \circ f = f}$. By uniqueness, we conclude that ${\lambda_g \circ \lambda_h = \lambda_{gh}}$. Thus, we see that the associated $2\mbox{-cocycle}$ $\chi_{\lambda}$ is zero. 

    For $g \in G$, $F$ also gives a morphism ${\overline{\gerbe} \to T_{g^{-1}}(\gerbe) }$ which is $i\mbox{-equivariant}$. By Proposition \ref{prop-hom-bands-gerbe-construction}, there exists a morphism ${F_g: \gerbe \to \gerbe }$ which is unique up to a $2\mbox{-isomorphism}$ such that ${F_g \circ F}$ and $F$ are $2\mbox{-isomorphic}$. By exploiting the uniqueness as above, it follows that for $g,h \in G$, the morphisms $F_{gh}$ and ${F_g \circ F_h}$ are $2\mbox{-isomorphic}$. Thus, we have the following $2\mbox{-commutative}$ diagram:
  \end{sloppypar}
  \[
  \begin{tikzcd}
    \overline{\gerbe} \arrow{d}{id} \arrow{r}{F} & \gerbe \arrow{d}{F_g} \\
    \overline{\gerbe} \arrow{r}{F} & \gerbe
  \end{tikzcd}
\]
We see that $F$ is a $G\mbox{-equivariant}$ morphism of gerbes with respect to the trivial action by $G$ on $\overline{\gerbe}$. It follows from Proposition \ref{prop-g-equivariant-map-and-3-cochain}, that the associated obstruction class for the action on $\gerbe$ is trivial. We note that we can also use a similar argument for torsors (See Lemma \ref{lemma-functoriality-of-2-cochain-for-torsors}). 
\end{proof}
\begin{theorem}\label{theorem-induced-from-invariants-when-cohomology-class-is-zero}
We have the converse of the above Proposition \ref{prop-induced-torsor-and-gerbes} when the group cohomology classes locally vanish. (See Appendix II (Section \ref{section-necessity-of-local-vanishing}) for examples that show that the local vanishing is necessary.)
  \begin{itemize}
    \begin{sloppypar}
    \item Let $y$ be an $A\mbox{-torsor}$ that admits a lift of the underlying action on $A$ such that the associated obstruction class in ${\cohomology{2}{G}{A(\xdown)}}$ is zero. If the first group cohomology classes locally vanish for $A$, then $y$ is induced from the invariants. 
    \item Let $\gerbe$ be an abelian gerbe banded by $A$ which admits a lift of the underlying action on $A$ such that the associated obstruction class in ${\cohomology{3}{G}{A(X)}}$ is zero. If the first two group cohomology classes locally vanish for $A$, then $\gerbe$ is induced from the invariants.
    \end{sloppypar}
  \end{itemize}
More concretely, we prove the following:
\begin{enumerate}
\item
  \begin{sloppypar}
    Assume the first group cohomology classes locally vanish for $A$. Suppose we have automorphisms $\{\lambda_g\}_{g \in G}$ of $y$ that lift the action of $G$ such that $\chi_{\lambda}$ is identically zero. Then the following holds:
  \end{sloppypar}
  \begin{itemize}
    \begin{sloppypar}
    \item There exists an ${\invariants\mbox{-torsor}}$ $\overline{y}$ with an $i\mbox{-invariant}$ morphism $f: \overline{y} \to y$ such that $\lambda_g \circ f = f $ for all $g \in G$.
    \item An ${\invariants\mbox{-torsor}}$ $\overline{y}$ as above is unique up to an ${\invariants\mbox{-equivariant}}$ isomorphism.
    \end{sloppypar}
  \end{itemize}
   
\item
  \begin{sloppypar}
    Assume that the first two group cohomology classes locally vanish for $A$. Suppose we have automorphisms $\{F_g\}_{g \in G}$ of $\gerbe$ that lift the $G\mbox{-action}$ such that the cohomology class $\kappa_{F}$ is zero. Then the following holds:
  \end{sloppypar}
  \begin{itemize}
    \begin{sloppypar}
    \item There exists a gerbe $\overline{\gerbe}$ banded by $\invariants$ with an $i\mbox{-equivariant}$ morphism ${F: \overline{\gerbe} \to \gerbe}$ such that for $g \in G$, the morphisms $F$ and $F_g \circ F$ are $2\mbox{-isomorphic}$. 
    \item For any gerbe $\overline{\gerbe}$ banded by $\invariants$ as above, we have an ${\invariants\mbox{-equivariant}}$ isomorphism ${\overline{\gerbe} \cong \gerbe_{(F,\alpha')}}$ for some choice of connecting $2\mbox{-isomorphism}$ ${\{\alpha'_{g,h}\}_{g,h \in G}}$.
    \end{sloppypar}
  \end{itemize}
\end{enumerate}
\end{theorem}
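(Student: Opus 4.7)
The natural candidates for $\overline{y}$ and $\overline{\gerbe}$ are the constructions $y_{\lambda}$ and $\gerbe_{(F,\alpha)}$ from earlier in the section. The local vanishing hypotheses will upgrade them from pseudo-objects to a genuine $\invariants$-torsor and a genuine gerbe banded by $\invariants$, the tautological forgetful morphisms will serve as the required $i$-equivariant maps, and uniqueness will follow by factoring any competing candidate through these constructions.

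\textbf{Torsor case.} Set $\overline{y} := y_{\lambda}$. Since $\chi_{\lambda}$ is identically zero and the first group cohomology classes for $A$ locally vanish, Lemma \ref{lemma-y-lambda-admits-local-sections-iff-general-case} ensures $\overline{y}$ is a legitimate $\invariants$-torsor. The tautological inclusion $f: \overline{y} \hookrightarrow y$ is $i$-equivariant and fixed by every $\lambda_g$ by the very definition of $y_\lambda$. For uniqueness, if $\overline{y}'$ is another $\invariants$-torsor with an $i$-equivariant $f': \overline{y}' \to y$ fixed by the $\lambda_g$, then $f'$ factors through $y_{\lambda} = \overline{y}$, producing an $\invariants$-equivariant morphism $\overline{y}' \to \overline{y}$, which is automatically an isomorphism.

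\textbf{Gerbe case.} Pick connecting $2$-morphisms $\alpha = \{\alpha_{g,h}\}$. Since $[\kappa_{(F,\alpha)}] = \kappa_F = 0$ in $\cohomology{3}{G}{A(\xdown)}$, Lemma \ref{lemma-different-connecting-changes-3-cocycle-by-a-coboundary} lets us modify $\alpha$ so that $\kappa_{(F,\alpha)}$ vanishes on the nose. Let $\overline{\gerbe} := \gerbe_{(F,\alpha)}$. Lemma \ref{lemma-when-gerbe-f-alpha-is-legit-gerbe} promotes it to an abelian gerbe under the assumed local vanishing of the first two group cohomology classes, and the proof of Lemma \ref{lemma-gerbe-f-alpha-is-psuedo-gerb-when-first-group-cohomology-vanishes-locally} identifies its band with $\invariants$. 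The forgetful functor $\overline{F}: (U, y, \{\phi_g\}) \mapsto y$ is $i$-equivariant, and the component $\phi_g$ of each object supplies a canonical $2$-isomorphism $F_g \circ \overline{F} \Rightarrow \overline{F}$. For uniqueness, given $\overline{\gerbe}'$ banded by $\invariants$ with $F': \overline{\gerbe}' \to \gerbe$ and chosen $2$-isomorphisms $\gamma_g: F_g \circ F' \Rightarrow F'$, view $\overline{\gerbe}'$ as carrying the trivial $G$-action; then $F'$ becomes $G$-equivariant, Proposition \ref{prop-g-equivariant-map-and-3-cochain} determines a unique $\alpha'$ on $\gerbe$ compatible with the $\gamma_g$, and Lemma \ref{lemma-functoriality-of-gerbe-f-alpha} produces a morphism $\overline{\gerbe}' \to \gerbe_{(F,\alpha')}$ that is $\invariants$-equivariant on bands, hence an isomorphism.

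\textbf{Main obstacle.} The torsor half is a clean factorisation argument. The technical core is the gerbe case, where the $2$-categorical diagram chase required to verify that the comparison functor $\overline{\gerbe}' \to \gerbe_{(F,\alpha')}$ is a well-defined morphism of gerbes banded by $\invariants$ is routine but lengthy, and the place where the second cohomology hypothesis is really used is the invocation of Lemma \ref{lemma-when-gerbe-f-alpha-is-legit-gerbe} to produce local sections of $\gerbe_{(F,\alpha)}$. Without the local vanishing of $\cohomology{2}{G}{A(-)}$, the vanishing of $\kappa_{(F,\alpha)}$ alone is not enough to guarantee that $\gerbe_{(F,\alpha)}$ is a genuine gerbe, as Lemma \ref{lemma-2-cocycle-obstruction-for-local-objects-for-gerbe-F-alpha} makes clear.
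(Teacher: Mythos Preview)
Your proposal is correct and matches the paper's proof essentially step for step: the candidates are $y_{\lambda}$ and $\gerbe_{(F,\alpha)}$, the local-vanishing lemmas (\ref{lemma-y-lambda-admits-local-sections-iff-general-case} and \ref{lemma-when-gerbe-f-alpha-is-legit-gerbe}) upgrade them to a genuine torsor and gerbe, and uniqueness is obtained by factoring any competitor through these constructions via the trivial $G$-action on $\overline{\gerbe}'$ and Proposition~\ref{prop-g-equivariant-map-and-3-cochain}. One minor imprecision: Lemma~\ref{lemma-functoriality-of-gerbe-f-alpha} as stated yields a morphism $(\overline{\gerbe}')_{(\mathrm{triv},\mathrm{triv})} \to \gerbe_{(F,\alpha')}$ rather than one out of $\overline{\gerbe}'$ itself, so you must precompose with the tautological section $y \mapsto (U,y,\{\mathrm{id}_y\})$ --- and that composite is precisely the explicit formula $y \mapsto (U,F'(y),\{\epsilon_g(y)\})$ that the paper writes down directly; you should also note, as the paper does, that $\kappa_{(F,\alpha')}$ vanishes identically (being the image of the zero cochain on $\overline{\gerbe}'$), so that $\gerbe_{(F,\alpha')}$ is again a genuine gerbe and the equivariant comparison map is indeed an isomorphism.
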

\begin{proof}
  \begin{sloppypar}
    If $\chi_{\lambda}$ is identically zero, then by Lemma \ref{lemma-y-lambda-admits-local-sections-iff-general-case} we see that $y_{\lambda}$ is an $\invariants\mbox{-torsor}$. We have a natural map ${f :y_{\lambda} \to y}$ which is $i\mbox{-equivariant}$. Moreover, it is clear that for $g \in G$, we have ${\lambda_{g} \circ f = f}$. Suppose ${f': \overline{y} \to y}$ is another $i\mbox{-equivariant}$ morphism of torsors such that ${\lambda_{g} \circ f' = f'}$ for all $g \in G$. For an object $U$ of $\thesite$, $f'$ gives a map ${f': \overline{y}(U) \to y(U)}$. Since ${\lambda_{g} \circ f' = f'}$ for all $g \in G$, we see that for any $x \in \overline{y}(U)$, we have ${\lambda_{g}(f'(x)) = f'(x)}$. Thus the map ${x \to f'(x)}$ gives an $\invariants\mbox{-equivariant}$ morphism from $\overline{y}$ to $y_{\lambda}$. Since equivariant morphisms of torsors are isomorphisms, this proves the first part.

    By Lemma \ref{cor-3-cochain-depends-only-on-2-iso-class-upto-coboundary}, we can choose connecting ${2\mbox{-isomorphisms}}$ ${\{\alpha_{g,h}\}_{g,h \in G}}$ such that the cochain $\kappa_{(F,\alpha)}$ is identically zero. Then by Lemma \ref{lemma-when-gerbe-f-alpha-is-legit-gerbe}, we see that $\gerbe_{(F,\alpha)}$ is an abelian gerbe banded by $\invariants$. There is a natural $i\mbox{-equivariant}$ morphism of gerbes ${F: \gerbe_{(F,\alpha)} \to \gerbe}$ which is locally given by ${(U,y,\{\phi_g\}_{g \in G}) \to y}$. For $g \in G$, we have a base preserving natural transformation locally given by ${y \xrightarrow{\phi_g} F_g(y)}$ that defines a $2\mbox{-isomorphism}$ from $F$ to $F_g \circ F$. Suppose ${F': \overline{\gerbe} \to \gerbe}$ is another $i\mbox{-equivariant}$ morphism of gerbes such that ${F_{g} \circ F'}$ and $F'$ are $2\mbox{-isomorphic}$ for all $g \in G$. We consider trivial action of $G$ on $\overline{\gerbe}$ with trivial connecting $2\mbox{-isomorphisms}$. Then we have a situation as in Proposition \ref{prop-g-equivariant-map-and-3-cochain}, where we have a $G\mbox{-equivariant}$ map ${F': \overline{\gerbe} \to \gerbe}$. We choose $2\mbox{-morphisms}$ ${\{\epsilon_g: F \to F_g \circ F\}_{g \in G}}$ and fix the connecting $2\mbox{-isomorphisms}$ $\{\alpha'_{g,h}\}_{g,h \in G}$ for $\gerbe$ as in Proposition \ref{prop-g-equivariant-map-and-3-cochain}. Consequently, the cochain $\kappa_{(F,\alpha')}$ is identically zero. Since the first two group cohomology classes locally vanish, we can say that $\gerbe_{(F,\alpha')}$ is a gerbe banded by $\invariants$. Suppose $U$ is an object of $\thesite$ and ${y \in \objects({\overline{\gerbe}_U})}$ is a section of $\overline{\gerbe}$ over $U$. Then for $g \in G$, we have the morphism ${\epsilon_g(y): F(y) \to \left(F_g \circ F\right) (y)}$ in the fiber category $\gerbe_U$. It is straightforward to verify that ${(U,F(y),\{\epsilon_g(y)\}_{g \in G})}$ is a valid object of $\gerbe_{(F,\alpha')}$ over $U$. Thus, ${y \to (U,F(y),\{\epsilon_g(y)\}_{g \in G})}$ defines a $\invariants\mbox{-equivariant}$ morphism of gerbes. Since equivariant morphisms of gerbes are isomorphisms, this completes the proof. 
  \end{sloppypar}
\end{proof}
\section{Group actions on trivial gerbe}
Let $G$ be a finite group and $A$ be an abelian sheaf on $\thesite$ as before. In this section, we consider group actions on the trivial gerbe banded by $A$. Our first goal is to prove that if $G$ acts equivariantly on the trivial gerbe (i.e. the underlying action on $A$ is trivial), then the obstruction class is zero.
\begin{lemma}\label{lemma-inflation-for-torsors}
  \begin{sloppypar}
    Let $y$ be an $A\mbox{-torsor}$. Suppose we are given an action of $G$ on $y$. Let $N$ denote the normal subgroup of $G$ of those elements whose underlying action on $A$ is trivial. Then we have an action of $G/N$ on $y$ such that the obstruction class for the $G\mbox{-action}$ is given by inflating the obstruction class for $G/N$ via the inflation map ${\cohomology{2}{G/N}{A(\xdown)} \to  \cohomology{2}{G}{A(\xdown)}}$.
\end{sloppypar}
\end{lemma}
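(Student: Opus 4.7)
The plan is to modify the given $G$-action on $y$ by a suitably chosen $1\mbox{-cochain}$ using Lemma \ref{lemma-change-in-lifted-action-on-torsors-coboundary}, so that the modified action descends to a bona fide $G/N$-action on $y$. The $2\mbox{-cocycle}$ of the new $G$-action will then literally be the inflation of the $2\mbox{-cocycle}$ of the $G/N$-action, and since it is cohomologous to $\chi_\lambda$, the lemma will follow upon passing to cohomology classes.

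In more detail: because every $n \in N$ acts trivially on $A$, the morphism $\lambda_n \colon y \to y$ is $A\mbox{-equivariant}$ and hence equals scalar multiplication by a unique element $a_n \in A(\xdown)$. First I will fix a set-theoretic section $s \colon G/N \to G$ with $s(1) = 1$ and use the unique factorization $g = n \cdot s(\bar g)$ (where $\bar g := gN$ and $n \in N$) to define a $1\mbox{-cochain}$ with values in $A(\xdown)$ by the formula $a'_g := \chi_\lambda(n, s(\bar g)) - a_n$. By the second half of Lemma \ref{lemma-change-in-lifted-action-on-torsors-coboundary}, the morphisms $\lambda'_g := a'_g \circ \lambda_g$ are again a lift of the $G$-action on $y$, with $2\mbox{-cocycle}$ $\chi_{\lambda'}$ cohomologous to $\chi_\lambda$.

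The crucial step is to verify that $\lambda'_g = \lambda_{s(\bar g)}$ for every $g \in G$. This follows from the defining diagram of $\chi_\lambda$ applied to the pair $(n, s(\bar g))$, which gives $\lambda_n \circ \lambda_{s(\bar g)} = \chi_\lambda(n, s(\bar g)) \circ \lambda_g$, combined with $\lambda_n = a_n$. Once this is in place, the assignment $\tilde\lambda_{\bar g} := \lambda'_g$ defines a $G/N$-action on $y$ lifting the quotient action of $G/N$ on $A$. Its $2\mbox{-cocycle}$ satisfies $\chi_{\tilde\lambda}(\bar g, \bar h) = \chi_{\lambda'}(g, h)$ for any representatives $g, h \in G$, so $\chi_{\lambda'}$ is precisely the inflation of $\chi_{\tilde\lambda}$. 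Passing to cohomology classes then gives ${[\chi_\lambda] = [\chi_{\lambda'}] = \inf[\chi_{\tilde\lambda}]}$, as claimed.

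I expect the main obstacle to be the internal consistency of the construction: both the identity $\lambda'_g = \lambda_{s(\bar g)}$ and the fact that the resulting $\tilde\lambda$ truly descends to $G/N$ depend on applying the $G$-cocycle condition for $\chi_\lambda$ in tandem with the relation $n \cdot c = c$ for $n \in N$ and $c \in A(-)$. It is exactly this triviality of the $N$-action on $A$ that cancels the twists in the cocycle identity and enables the descent.
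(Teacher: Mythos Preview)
Your proposal is correct and follows essentially the same approach as the paper. The paper's proof simply asserts (citing Lemma \ref{lemma-change-in-lifted-action-on-torsors-coboundary}) that one may assume $\lambda_g = \lambda_{g'}$ whenever $g \equiv g' \pmod N$, whereas you carry out this modification explicitly via the section $s$ and the $1$-cochain $a'_g$; both arguments then conclude by observing that the resulting cocycle factors through $G/N$.
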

\begin{proof}
Suppose the action of $G$ on $y$ is given by the morphisms $\{\lambda_g\}_{g \in G}$. Since $N$ acts trivially on $A$ and the obstruction class for $G$ does not depend on the choice of the lift (Lemma \ref{lemma-change-in-lifted-action-on-torsors-coboundary}), we can assume that whenever $g$ and $g'$ are congruent modulo $N$, we have ${\lambda_g = \lambda_{g'}}$. Thus we get an action of $G/N$ on $y$. Moreover, it is clear that the cochain that defines the obstruction class factors through $G/N$. Therefore it follows that the obstruction class is given by the inflation map as claimed.
\end{proof}
\begin{lemma}\label{lemma-inflation-for-gerbes}
  \begin{sloppypar}
    Let $\gerbe$ be a gerbe banded by $A$ over $\xdown$. Suppose we have an action of $G$ on $\gerbe$. Let $N$ denote the normal subgroup of $G$ of those elements whose action on $\gerbe$ is $2\mbox{-isomorphic}$ to the identity morphism. Then we have an action of $G/N$ on $\gerbe$ such that the obstruction class for $G$ is given by inflating the obstruction class for $G/N$ via the inflation map ${\cohomology{3}{G/N}{A(\xdown)} \to  \cohomology{3}{G}{A(\xdown)}}$.
  \end{sloppypar}
\end{lemma}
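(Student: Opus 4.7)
The plan is to mirror the proof of Lemma \ref{lemma-inflation-for-torsors}, exploiting the fact that the cohomology class $\kappa_F$ is insensitive to replacing each $F_g$ by a $2\mbox{-isomorphic}$ morphism (Corollary \ref{cor-3-cochain-depends-only-on-2-iso-class-upto-coboundary}). The goal is to arrange that both the morphisms $F_g$ and a choice of connecting $2\mbox{-morphisms}$ $\alpha_{g,h}$ factor through the quotient $G/N$, so that the resulting $3\mbox{-cocycle}$ is visibly the pullback along $G^3 \to (G/N)^3$ of a $3\mbox{-cocycle}$ for $G/N$.

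Before the main construction I would first check that the statement of the lemma is even well-posed: for $n \in N$, the morphism $F_n$ is $2\mbox{-isomorphic}$ to the identity, and by Remark \ref{remark-iso-functors-induce-same-map-on-bands} it therefore induces the identity on the band $A$. Hence the underlying action of $G$ on $A$ factors through $G/N$, making the inflation map ${\cohomology{3}{G/N}{A(\xdown)} \to \cohomology{3}{G}{A(\xdown)}}$ defined.

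For the main step, for each coset $\overline{g} \in G/N$ I would fix a representative $g_0$ and replace $F_g$ by $F_{g_0}$ for every $g \in \overline{g}$. Writing $g = g_0 n$ with $n \in N$, we have $F_{g_0 n} \simeq F_{g_0} \circ F_n \simeq F_{g_0}$, so this replacement is by a $2\mbox{-isomorphic}$ morphism and Corollary \ref{cor-3-cochain-depends-only-on-2-iso-class-upto-coboundary} guarantees that $\kappa_F$ is unchanged. After this replacement the collection $\{F_g\}_{g \in G}$ factors through $G/N$ and defines a $G/N$-action $\tilde F_{\overline{g}} := F_g$ on $\gerbe$. I would then pick any connecting $2\mbox{-morphisms}$ $\{\tilde\alpha_{\overline{g},\overline{h}}\}$ for this $G/N$-action and set $\alpha_{g,h} := \tilde\alpha_{\overline{g},\overline{h}}$ for the $G$-action; this is well defined because both the source $F_{gh}$ and the target $F_g \circ F_h$ depend only on the cosets of $g$ and $h$ (using normality of $N$).

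With these specific choices, reading off Definition \ref{def-3-cochain-def-with-group-action-and-choice} in parallel for $(F,\alpha)$ and for $(\tilde F,\tilde\alpha)$ shows that every $2\mbox{-morphism}$ appearing in the defining hexagon depends only on the cosets of $g_1,g_2,g_3$, so $\kappa_{(F,\alpha)}(g_1,g_2,g_3) = \kappa_{(\tilde F,\tilde\alpha)}(\overline{g_1},\overline{g_2},\overline{g_3})$, which is precisely inflation at the cocycle level and thus at the cohomology level. The only genuinely non-trivial step is knowing that we are allowed to force $F_g$ to be literally equal (not merely $2\mbox{-isomorphic}$) on each coset without altering the obstruction class; this is exactly the content of Corollary \ref{cor-3-cochain-depends-only-on-2-iso-class-upto-coboundary}, and once it is in hand the remainder is bookkeeping.
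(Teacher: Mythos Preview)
Your proposal is correct and follows essentially the same approach as the paper: both invoke Corollary \ref{cor-3-cochain-depends-only-on-2-iso-class-upto-coboundary} to replace the $F_g$'s so that they depend only on the coset modulo $N$, and then observe that the resulting cocycle factors through $G/N$. Your version is simply more explicit (the well-posedness check via Remark \ref{remark-iso-functors-induce-same-map-on-bands} and the handling of the connecting $2$-morphisms), but the argument is the same.
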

\begin{proof}
  \begin{sloppypar}
    Let $\{F_g\}_{g \in G}$ denote the automorphisms of $\gerbe$ through which $G$ acts on $\gerbe$. Since the obstruction class only depends on the $2\mbox{-isomorphism}$ classes of $\{F_g\}$ (Corollary \ref{cor-3-cochain-depends-only-on-2-iso-class-upto-coboundary}), we can assume that ${F_g = F_{g'}}$ whenever $g$ and $g'$ are congruent modulo $N$. Then it follows that we get an action of $G/N$ and the cochain that defines the obstruction class factors through $G/N$. Therefore we conclude that the obstruction class is given by the inflation map as claimed.
  \end{sloppypar}
\end{proof}
\begin{proposition}\label{proposition-cyclic-group-trivial-action-on-A}
Suppose $G$ is a cyclic group that acts on a gerbe banded by $A$ such that the underlying action on $A$ is trivial. Then the associated obstruction cohomology class in ${\cohomology{3}{G}{A(\xdown)}}$ is trivial. 
\end{proposition}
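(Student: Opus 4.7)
The plan is to exhibit representatives $F_g$ of the $G$-action and connecting $2$-morphisms $\alpha_{g,h}$ for which the $3$-cochain $\kappa_{(F,\alpha)}$ vanishes identically, rather than merely cohomologically. Since Corollary \ref{cor-3-cochain-depends-only-on-2-iso-class-upto-coboundary} allows us to replace each $F_g$ by any $2$-isomorphic representative, and Lemma \ref{lemma-different-connecting-changes-3-cocycle-by-a-coboundary} shows that only the cohomology class of $\kappa_{(F,\alpha)}$ is invariant, it suffices to produce one such pair $(F,\alpha)$ with $\kappa_{(F,\alpha)}=0$.

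Let $\sigma$ be a generator of $G$, set $n=|G|$, and let $F=F_\sigma$. I normalize $F_e=\text{id}_\gerbe$ and $F_{\sigma^k}=F^k$ for $1\le k<n$. The axioms of a $G$-action then provide a $2$-isomorphism $\mu:F^n\xrightarrow{\sim}\text{id}_\gerbe$, which I fix. For $0\le a,b<n$, I set $\alpha_{\sigma^a,\sigma^b}=\text{id}_{F^{a+b}}$ when $a+b<n$, and $\alpha_{\sigma^a,\sigma^b}=\text{id}_{F^{a+b-n}}\star \mu^{-1}$ viewed as a $2$-morphism $F^{a+b-n}\to F^{a+b-n}\circ F^n=F^{a+b}$ when $a+b\ge n$.

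Because $\kappa_{(F,\alpha)}(\sigma^{a_1},\sigma^{a_2},\sigma^{a_3})$ is a global section of the sheaf $A$, vanishing may be checked on any cover. I would work on a cover $\{U_i\to\xdown\}$ over which $\gerbe$ admits local sections, so that $\gerbe|_{U_i}$ is equivalent to the trivial gerbe $\torsorcat{A|_{U_i}}$. By Remark \ref{remark_trivialzation_given_by_global_section_for_gerbe} together with Remark \ref{remark-composition-of-morphisms-and-addition-law-on-torsors}, the restrictions translate into $F^k|_{U_i}\simeq \isom(y_i^{\otimes k},-)$ for some $A|_{U_i}$-torsor $y_i$, and $\mu|_{U_i}$ corresponds to an isomorphism $y_i^{\otimes n}\xrightarrow{\sim} A|_{U_i}$. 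Under this dictionary, each whiskered copy of $\mu^{-1}$ appearing in the top or bottom route of $\kappa_{(F,\alpha)}$ corresponds to inserting the isomorphism $A\xrightarrow{\sim}y_i^{\otimes n}$ at some position in a tensor power of $y_i$; the two routes insert the same total number of copies but at different positions. Since the tensor product of torsors under the abelian sheaf $A$ is strictly commutative and associative, these insertions coincide and both routes produce identical $2$-morphisms. Hence $\kappa_{(F,\alpha)}|_{U_i}=0$ for every $i$, and by the sheaf property $\kappa_{(F,\alpha)}=0$ globally.

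I expect the main obstacle to be the bookkeeping involved in translating the $2$-categorical whiskering calculus for $\mu^{-1}$ on the global gerbe $\gerbe$ into tensor-product insertions in the local model $\torsorcat{A|_{U_i}}$, and in verifying that the choice of local trivialization of $\gerbe|_{U_i}$ does not affect the computation of $\kappa_{(F,\alpha)}|_{U_i}$. If this direct route proves unwieldy, a fallback is to combine Lemma \ref{lemma-inflation-for-gerbes} with induction on $|G|$: quotient $G$ by the normal subgroup of elements whose action on $\gerbe$ is $2$-isomorphic to the identity, reducing to the case where the action is \emph{faithful} up to $2$-isomorphism, for which the explicit local computation above becomes the essential step.
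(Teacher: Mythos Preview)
Your approach is quite different from the paper's, and the paper's argument is considerably slicker. The paper observes that for a cyclic group $G$ of order $n$ acting trivially, the odd group cohomology is functorially identified with the $n$-torsion: $\cohomology{3}{G}{A(U)} \cong A(U)[n]$ for every object $U$. Hence the presheaf $U \mapsto \cohomology{3}{G}{A(U)}$ is already a sheaf (it is the sheaf $A[n]$). Since the obstruction class restricts to a coboundary on any $U$ where $\gerbe$ admits a section (Lemma \ref{lemma-locally-kappa-f-alpha-is-coboundary}), it vanishes on a cover; the sheaf condition then forces it to vanish globally. No explicit cocycle computation is needed.

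Your route attempts instead to produce a specific pair $(F,\alpha)$ with $\kappa_{(F,\alpha)}=0$ identically, but there is a genuine gap at the key step. With your normalization, $\kappa_{(F,\alpha)}(\sigma^{a_1},\sigma^{a_2},\sigma^{a_3})$ compares whiskered copies of $\mu^{-1}$ inserted at different positions; in the simplest nontrivial case ($n=2$, $a_1=a_2=a_3=1$) it is exactly the difference between $\text{id}_F \star \mu^{-1}$ and $\mu^{-1}\star \text{id}_F$ as $2$-morphisms $F\to F^3$. You assert these agree because ``the tensor product of torsors under the abelian sheaf $A$ is strictly commutative and associative,'' but that is not true: the tensor product of $A$-torsors is symmetric monoidal only up to coherent isomorphism, not strictly, and tracking those coherence isomorphisms is precisely where the content of the computation lies. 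Moreover, the difference $\text{id}_F\star\mu - \mu\star\text{id}_F \in A(\xdown)$ is independent of the choice of $\mu$ (twisting $\mu$ by $a\in A(\xdown)$ shifts both sides by the same amount, since $F$ is $A$-equivariant), so you cannot fix it by adjusting $\mu$; you would have to prove it vanishes for structural reasons. Your local dictionary via $\isom(y_i,-)$ does not settle this either, because the $2$-isomorphisms identifying $F^k|_{U_i}$ with $\isom(y_i^{\otimes k},-)$ are chosen independently for each $k$ and need not intertwine the two whiskerings. The fallback via Lemma \ref{lemma-inflation-for-gerbes} reduces to the faithful case but still leaves this same computation to be done. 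The paper's sheaf argument sidesteps all of this.
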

\begin{proof}
  \begin{sloppypar}
 Let $n$ be the order of the group $G$ and let $j > 0$ be an odd index. Since $G$ is a cyclic group acting trivially, for $U$ in $\objects(\thesite)$, we have a functorial isomorphism ${\cohomology{j}{G}{A(U)} \cong \{ a \in A(U) | n \cdot a = 0 \}}$. Consequently, the assignment ${U \to \cohomology{3}{G}{A(U)}}$ defines a sheaf on $\thesite$ which is isomorphic to the sheaf of $n\mbox{-torsion}$ elements in $A$. We know that the restriction of the obstruction class is locally zero (See proof of Proposition \ref{prop-3-cochain-is-cocycle-general-case}). By the sheaf condition, we conclude that it is globally zero.
  \end{sloppypar}
\end{proof}
\begin{lemma}\label{lemma-direct-product-action-on-gerbes}
  \begin{sloppypar}
    Let $\gerbe_1$ and $\gerbe_2$ be two gerbes banded by $A$. Suppose we have a group action by $G_1$ (resp. by $G_2$) on $\gerbe_1$ (resp. on $\gerbe_2$) such that the underlying action on $A$ is trivial.  Let $\gerbe_1 \times \gerbe_2$ denote the product category which is a gerbe banded by $A \oplus A$.  We consider component wise group action of $G_1 \times G_2$ on $\gerbe_1 \times \gerbe_2$. Let $\gerbe_3$ be the gerbe banded by $A$ which admits a morphism ${F: \gerbe_1 \times \gerbe_2 \to \gerbe_3}$ that is equivariant with respect to the group law. Then for every $(g_1,g_2)$ in $G_1 \times G_2$, there is a unique (up to $2\mbox{-isomorphism}$) $A\mbox{-equivariant}$ automorphism $F_{(g_1,g_2)}$ of $\gerbe_3$ such that the following diagram $2\mbox{-commutes}$:
  \end{sloppypar}
  \[
  \begin{tikzcd}
    \gerbe_1 \times \gerbe_2 \arrow{d}{(g_1,g_2)} \arrow{r}{F} & \gerbe_3 \arrow{d}{F_{(g_1,g_2)}} \\
    \gerbe_1 \times \gerbe_2 \arrow{r}{F} & \gerbe_3
  \end{tikzcd}
\]
\begin{sloppypar}
  The morphisms $\{F_{(g_1,g_2)}\}$ define an action of $G_1 \times G_2$ on $\gerbe_3$. Moreover, if $\kappa_1$ and $\kappa_2$ denote the obstruction classes for the actions of $G_1$ and $G_2$ acting on $\gerbe_1$ and $\gerbe_2$ respectively, then the obstruction class for the action of $G_1 \times G_2$ on $\gerbe_3$ is given by the image of $(\kappa_1,\kappa_2)$ under the natural map:
  $$ \cohomology{3}{G_1}{A(\xdown)} \times \cohomology{3}{G_2}{A(\xdown)} \cong \cohomology{3}{G_1 \times G_2}{A(\xdown) \oplus A(\xdown)} \to \cohomology{3}{G_1\times G_2}{A(\xdown)} .$$
  In particular, if $\kappa_1$ and $\kappa_2$ are zero, then the obstruction class for the action on $\gerbe_3$ is zero as well.
\end{sloppypar}
\end{lemma}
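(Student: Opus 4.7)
The plan is to exploit the universal property of the pushforward gerbe from Proposition \ref{prop-hom-bands-gerbe-construction}(2) together with the functoriality of the obstruction class from Proposition \ref{prop-g-equivariant-map-and-3-cochain}. Throughout, the key input is that since each $G_i$ acts trivially on $A$, the product action of $G_1 \times G_2$ on $\gerbe_1 \times \gerbe_2$ induces the identity on the band $A \oplus A$.

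\textbf{Existence and uniqueness of $F_{(g_1,g_2)}$.} For any $(g_1,g_2) \in G_1 \times G_2$, the composite $F \circ (g_1,g_2) : \gerbe_1 \times \gerbe_2 \to \gerbe_3$ is again equivariant with respect to the group law $A \oplus A \to A$, since $(g_1,g_2)$ is the identity on $A \oplus A$. By the universal property of $F$ in Proposition \ref{prop-hom-bands-gerbe-construction}(2), there is a unique (up to 2-isomorphism) $A$-equivariant automorphism $F_{(g_1,g_2)} : \gerbe_3 \to \gerbe_3$ such that $F_{(g_1,g_2)} \circ F$ is 2-isomorphic to $F \circ (g_1,g_2)$.

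\textbf{Compatibility with the group law.} To see that $(g_1,g_2) \mapsto F_{(g_1,g_2)}$ is a $G_1 \times G_2$-action, I would apply the same uniqueness again: both $F_{(g_1 h_1,\, g_2 h_2)}$ and $F_{(g_1,g_2)} \circ F_{(h_1,h_2)}$ are $A$-equivariant automorphisms of $\gerbe_3$ whose post-composition with $F$ is 2-isomorphic to $F \circ (g_1,g_2) \circ (h_1,h_2) = F \circ (g_1 h_1,\, g_2 h_2)$. Hence they are 2-isomorphic, and the assignment is a $G_1 \times G_2$-action in the sense of Definition \ref{def-group-action-on-gerbe}.

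\textbf{Obstruction class.} With this setup, $F$ is a $(G_1 \times G_2)$-equivariant morphism of gerbes in the sense of Definition \ref{def-G-equivariant-morphism-of-gerbes}, with respect to the group law $A \oplus A \to A$. Proposition \ref{prop-g-equivariant-map-and-3-cochain} then identifies the obstruction class for the action on $\gerbe_3$ as the image under the group law of the obstruction class for the product action on $\gerbe_1 \times \gerbe_2$. To compute the latter, choose connecting 2-morphisms $\alpha^i_{g,h}$ for $\gerbe_i$ separately and take $\alpha_{(g_1,g_2),(h_1,h_2)} := \alpha^1_{g_1,h_1} \times \alpha^2_{g_2,h_2}$ as connecting 2-morphisms for the product action. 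Unpacking Definition \ref{def-3-cochain-def-with-group-action-and-choice}, the resulting $A(\xdown) \oplus A(\xdown)$-valued 3-cochain is simply the pair of pullbacks of $\kappa_1$ and $\kappa_2$ along the projections $G_1 \times G_2 \to G_i$; under the natural identification appearing in the statement this is exactly the image of $(\kappa_1, \kappa_2)$. Applying the group law $A \oplus A \to A$ gives the claimed formula, and the vanishing statement is then immediate.

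\textbf{Main obstacle.} The existence/uniqueness and group-law parts are entirely formal consequences of the universal property already established. The substantive step is the cochain bookkeeping for the product action, i.e.\ verifying that the product of connecting 2-morphisms yields a 3-cochain that splits as a direct sum; this amounts to checking that horizontal composition of 2-morphisms in $\gerbe_1 \times \gerbe_2$ is component-wise, which is transparent once one expands the definitions but is the only genuine computation in the argument.
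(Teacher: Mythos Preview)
Your proposal is correct and follows essentially the same approach as the paper: existence and uniqueness of $F_{(g_1,g_2)}$ from Proposition \ref{prop-hom-bands-gerbe-construction}, the group-action property from uniqueness, and the obstruction-class formula from Proposition \ref{prop-g-equivariant-map-and-3-cochain} applied to the $(G_1\times G_2)$-equivariant morphism $F$. The paper's proof simply asserts that the obstruction class for the product action on $\gerbe_1\times\gerbe_2$ is $(\kappa_1,\kappa_2)$ as ``straightforward,'' whereas you spell out the product choice of connecting 2-morphisms, but this is the same argument.
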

\begin{proof}
  \begin{sloppypar}
    The existence and uniqueness of the morphisms $\{F_{(g_1,g_2)}\}$ follows from the Proposition \ref{prop-hom-bands-gerbe-construction}. It follows from the uniqueness that these morphisms define an action of $G_1 \times G_2$ on $\gerbe_3$. It is straightforward to see that the obstruction class for the action of $G_1 \times G_2$  on $\gerbe_1 \times \gerbe_2$ is $(\kappa_1,\kappa_2)$. Then it follows from the Proposition \ref{prop-g-equivariant-map-and-3-cochain} that the obstruction class for the action on $\gerbe_3$ is given by the image of $(\kappa_1,\kappa_2)$ as claimed.
  \end{sloppypar}
\end{proof}
\begin{proposition}\label{prop-trivial-action-on-trivial-gerbe}
Let $\trivialgerbe$ be the category of $A\mbox{-torsors}$ which is a trivial gerbe banded by $A$. Suppose we have an action of the group $G$ on $\trivialgerbe$ such that the underlying action on $A$ is trivial. Then the associated obstruction class in $\cohomology{3}{G}{A(\xdown)}$ is zero.
\end{proposition}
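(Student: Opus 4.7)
The plan is to reduce the problem to cyclic groups using the inflation lemma and the product lemma, and then invoke Proposition \ref{proposition-cyclic-group-trivial-action-on-A}.

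First I would use Remark \ref{remark_trivialzation_given_by_global_section_for_gerbe} to replace each $F_g$ by a $2$-isomorphic $\isom(y_g,-)$ for some $A$-torsor $y_g$ on $\xdown$; by Corollary \ref{cor-3-cochain-depends-only-on-2-iso-class-upto-coboundary} this does not change the obstruction class. The $2$-isomorphism $F_{gh}\cong F_g\circ F_h$ combined with Remark \ref{remark-composition-of-morphisms-and-addition-law-on-torsors} gives $y_{gh}\cong y_g\otimes y_h$, so $\Phi\colon G\to\cohomology{1}{\xdown}{A}$, $g\mapsto[y_g]$, is a group homomorphism. An element $g$ acts $2$-trivially on $\trivialgerbe$ if and only if $y_g$ is the trivial torsor, i.e., if and only if $g\in\ker\Phi$. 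By Lemma \ref{lemma-inflation-for-gerbes} the obstruction for $G$ is the inflation of the obstruction for $G/\ker\Phi$, so it is enough to treat $G/\ker\Phi$; after this reduction $\Phi$ is injective, and $G$ embeds in the abelian group $\cohomology{1}{\xdown}{A}$, hence is finite abelian.

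Decompose $G=\bigoplus_{i=1}^{k}C_{n_i}$, pick generators $e_i$, and set $z_i=y_{e_i}$. For each $i$, the assignment $e_i^{m}\mapsto\isom(z_i^{\otimes m},-)$ is a well-defined $C_{n_i}$-action on $\trivialgerbe$ (since $n_i[z_i]=\Phi(n_i e_i)=0$, so $z_i^{\otimes n_i}$ is trivial), and its underlying action on $A$ is trivial, so its obstruction vanishes by Proposition \ref{proposition-cyclic-group-trivial-action-on-A}. Now consider $\trivialgerbe^{\times k}$, banded by $A^{\oplus k}$, equipped with the componentwise $G$-action, and the iterated tensor product $F\colon\trivialgerbe^{\times k}\to\trivialgerbe$, which is equivariant with respect to the group law $A^{\oplus k}\to A$. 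Iterating Lemma \ref{lemma-direct-product-action-on-gerbes} (two factors at a time) produces an induced $G$-action $\{F'_g\}$ on $\trivialgerbe$ whose obstruction class is the image of the tuple $(0,\dots,0)$, hence zero.

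It remains to identify $F'_g$ with $F_g$ up to $2$-isomorphism, after which Corollary \ref{cor-3-cochain-depends-only-on-2-iso-class-upto-coboundary} transfers the vanishing back to the original action. On a tensor product $w_1\otimes\cdots\otimes w_k$ the induced action evaluates (using Remark \ref{remark-composition-of-morphisms-and-addition-law-on-torsors}) to
$\isom(z_1^{\otimes g_1},w_1)\otimes\cdots\otimes\isom(z_k^{\otimes g_k},w_k)\cong\isom(y_g,w_1\otimes\cdots\otimes w_k)=F_g(w_1\otimes\cdots\otimes w_k)$,
so $F'_g$ and $F_g$ agree on tensor products; since both are $A$-equivariant endomorphisms of $\trivialgerbe$, the uniqueness assertion in Lemma \ref{lemma-direct-product-action-on-gerbes} (equivalently the uniqueness clause of Proposition \ref{prop-hom-bands-gerbe-construction}) forces $F'_g\cong F_g$. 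The main obstacle is precisely this last verification: tracing the induced automorphism through the iterated application of Lemma \ref{lemma-direct-product-action-on-gerbes}, handling the tensor-product identifications carefully, and invoking the right uniqueness statement so that the computed vanishing transfers to the original action.
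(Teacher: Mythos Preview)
Your proof is correct and follows essentially the same route as the paper's: reduce to an abelian quotient via the inflation lemma (Lemma \ref{lemma-inflation-for-gerbes}), decompose into cyclic factors, apply Proposition \ref{proposition-cyclic-group-trivial-action-on-A} to each factor, and then invoke the direct-product lemma (Lemma \ref{lemma-direct-product-action-on-gerbes}). The paper's argument for abelianness is phrased slightly differently (``these automorphisms $2$-commute'') but amounts to your embedding $G/\ker\Phi \hookrightarrow \cohomology{1}{\xdown}{A}$.

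The one place you are more careful than the paper is the final identification $F'_g\cong F_g$, which the paper leaves entirely implicit. Your sketch there is right in spirit but the phrasing ``agree on tensor products'' is not by itself enough to conclude a $2$-isomorphism of functors. The clean way to finish is exactly the uniqueness you cite: both $F_g\circ F$ and $F\circ(\text{componentwise action by }g)$ are morphisms $\trivialgerbe^{\times k}\to\trivialgerbe$ equivariant for the addition map $A^{\oplus k}\to A$, so by Proposition \ref{prop-hom-bands-gerbe-construction} they differ by an $A$-equivariant automorphism $\isom(w,-)$; evaluating at the trivial tuple $(A,\ldots,A)$ shows $w$ is trivial, hence the diagram $2$-commutes with $F_g$ in place of $F'_g$, and the uniqueness clause of Lemma \ref{lemma-direct-product-action-on-gerbes} gives $F'_g\cong F_g$.
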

\begin{proof}
  \begin{sloppypar}
    An $A\mbox{-equivariant}$ automorphism of $\trivialgerbe$ is $2\mbox{-isomorphic}$ to $\isom(y,-)$ for some global torsor $y$ on $\xdown$. In particular, these automorphisms $2\mbox{-commute}$ (See Remark \ref{remark-composition-of-morphisms-and-addition-law-on-torsors}). Therefore we can argue that the action of $G$ factors through an abelian quotient $G/N$ for some normal subgroup $N$ of $G$. By Lemma \ref{lemma-inflation-for-gerbes}, it suffices to prove that the obstruction class for the effective action by $G/N$ is zero. Since $G/N$ is abelian, we have an isomorphism ${G/N \cong G_1 \times G_2 \ldots \times G_n}$ where each $G_i$ is a cyclic group. We have a morphism of gerbes ${\bigotimes_{i = 1}^{n} \trivialgerbe \to \trivialgerbe }$ which is equivariant with respect to the addition map ${\oplus_{i = 1}^{n}A \to A}$. Then we have a component wise action of ${G_1 \times G_2 \ldots \times G_n}$ on ${\bigotimes_{i = 1}^{n} \trivialgerbe}$ where each $G_i$ acts by restriction. We know that the obstruction class for a cyclic group is zero when the underlying action is trivial (Proposition \ref{proposition-cyclic-group-trivial-action-on-A}). Therefore the claim follows from Lemma \ref{lemma-direct-product-action-on-gerbes}.
  \end{sloppypar}
\end{proof}
\begin{sloppypar}
  Suppose we are given an action of $G$ on the sheaf $A$. Let ${\invariants \xrightarrow{i} A}$ denote the invariants. We have an action of $G$ on the sheaf cohomology groups $\cohomology{1}{\xdown}{A}$ and ${\cohomology{2}{\xdown}{A}}$. In terms of torsors and gerbes, this action can be described as in Definition \ref{def-G-action-on-torsors-and-gerbes}. We want to prove that the lifts of the $G\mbox{-action}$ on the trivial gerbe (up to equivalence) are classified by the group cohomology ${\cohomology{1}{G}{\cohomology{1}{\xdown}{A}}}$. We begin with an analogous observation for torsors. Let $\trivialtorsor$ denote the sheaf $A$ which we regard as a trivial $A\mbox{-torsor}$.
\end{sloppypar}
\begin{lemma}\label{lemma-1-cocycle-for-action-on-trivial-torsor}
  \begin{sloppypar}
    Let $\{\lambda_g\}$ be automorphisms of $\trivialtorsor$ that lift the $G\mbox{-action}$. Then we have the following:
  \end{sloppypar}
  \begin{itemize}
  \item Then is a unique ${1\mbox{-cochain}}$ $\{a_g\}_{g \in G}$ with values in $A(\xdown)$ such that for $g \in G$, the map $\lambda_g$ is given by ${a \to (g \cdot a ) + a_g}$.
\item The associated $2\mbox{-cocycle}$ $\chi_{\lambda}$ is the coboundary of the $1\mbox{-cochain}$ $\{a_g\}_{g \in G}$. 
\item Let $\{a'_g\}_{g \in G}$ be another $1\mbox{-cochain}$ with values in $A(\xdown)$ and let $\{\lambda'_g\}_{g \in G}$ be the automorphisms of $\trivialtorsor$ as defined above. The two cochains $\{a_g\}$ and $\{a'_g\}$ differ by a coboundary if and only if there exists an $A\mbox{-equivariant}$ map which is $G\mbox{-equivariant}$ with respect to the two actions. 
\end{itemize}
\end{lemma}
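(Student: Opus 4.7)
The plan is to carry out three short unwindings of definitions. First, since $\trivialtorsor$ is $A$ itself with torsor action given by addition, the hypothesis that $\lambda_g$ is equivariant with respect to the action of $g$ on $A$ reads, on sections, $\lambda_g(a + x) = (g \cdot a) + \lambda_g(x)$. Setting $x = 0$ forces $\lambda_g(a) = (g \cdot a) + \lambda_g(0)$, so defining $a_g := \lambda_g(0)$ yields the claimed formula. The element $a_g$ lies in $A(\xdown)$ because $\lambda_g$ is a morphism of sheaves and $0$ is the canonical global section of $A$; uniqueness is immediate from $a_g = \lambda_g(0)$.

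Next, to identify $\chi_\lambda$ as the coboundary of $\{a_g\}$, I would evaluate both sides of the defining relation $\lambda_g \circ \lambda_h = \chi_\lambda(g,h) \circ \lambda_{gh}$ of Definition \ref{def-2-cochain-for-torsor-action} on an arbitrary section $a$, using the formula of the first step. Expanding $\lambda_g(\lambda_h(a))$ and $\lambda_{gh}(a) + \chi_\lambda(g,h)$ and comparing produces $\chi_\lambda(g,h) = (g \cdot a_h) - a_{gh} + a_g$, which is exactly the standard group cohomology coboundary of the $1$-cochain $\{a_g\}_{g \in G}$.

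For the third part, the key observation is that every $A$-equivariant automorphism of $\trivialtorsor$ is translation by a unique $b \in A(\xdown)$, that is, $\phi(x) = x + b$ (a trivial case of Proposition \ref{prop-hom-bands-gerbe-construction}). Imposing the $G$-equivariance condition $\phi \circ \lambda_g = \lambda'_g \circ \phi$ and expanding both sides via the formula of the first step collapses to the single identity $a_g + b = (g \cdot b) + a'_g$, equivalently $a'_g - a_g = b - (g \cdot b)$. Reading this identity forward yields the coboundary relation; reading it backward recovers the translation $b$ from any coboundary witness, giving both directions of the equivalence at once.

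There is no real obstacle here: the three parts amount to bookkeeping with the additive group law on the trivial torsor. The only care required is in keeping the additive sign conventions consistent and in observing that $\lambda_g(0)$, being the image under a sheaf morphism of the canonical global section, is automatically defined over $\xdown$, so that $\{a_g\}_{g\in G}$ is a legitimate $G$-cochain with values in $A(\xdown)$.
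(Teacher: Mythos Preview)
Your proposal is correct and follows essentially the same approach as the paper. The only cosmetic difference is that for the second part the paper invokes Lemma~\ref{lemma-local-section-commutative-diagram-for-torsors} applied to the zero global section rather than redoing the computation inline, and for the first part it notes the alternative of appealing to Proposition~\ref{prop-hom-bands-gerbe-construction}; your direct unwinding is exactly the content of those references in this special case.
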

\begin{proof}
  It is easy to verify the first part directly but it also follows from Proposition \ref{prop-hom-bands-gerbe-construction}. The second part follows from Lemma \ref{lemma-local-section-commutative-diagram-for-torsors} by considering the zero global section. For the third part, we note that an $A\mbox{-equivariant}$ automorphism of $\trivialtorsor$ is given by $a \to a + c$ for some $c \in A(\xdown)$. This map is $G\mbox{-equivariant}$ with respect to the two actions if and only if the $1\mbox{-cochains}$ $\{a_g\}_{g \in G}$ and $\{a'_g\}_{g \in G}$ differ by the coboundary of the $0\mbox{-cochain}$ $\{c\}$. 
\end{proof}  
\begin{lemma}\label{lemma-existence-of-tg-with-no-obstruction}
  \begin{sloppypar}
    There exist automorphisms ${\{T_g\}_{g \in G}}$ of the trivial gerbe $\trivialgerbe$ that lift the $G\mbox{-action}$ such that the obstruction class is zero.
  \end{sloppypar}
\end{lemma}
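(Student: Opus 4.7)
The plan is to produce explicit automorphisms $T_g \colon \trivialgerbe \to \trivialgerbe$ lifting the given $G$-action on $A$ by applying the functors $\{T_g\}_{g \in G}$ from Definition \ref{def-G-action-on-torsors-and-gerbes} to $\trivialgerbe$ itself. Recall that $T_g$ sends an $A$-torsor $y$ to the torsor $T_g(y)$ with the same underlying sheaf of sets but whose $A$-action is twisted by $g^{-1}$; on morphisms, $T_g$ is the identity on underlying sheaf maps, since any $A$-equivariant morphism remains $A$-equivariant after twisting both sides by the same scalar. Hence $T_g$ is a genuine autoequivalence of the fibered category $\trivialgerbe$.

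The first step is to check that each $T_g$ is $\rho_g$-equivariant on the band $A$. Under the twisted $A$-structure on $T_g(y)$, translation of $y$ by a sheaf-level scalar $b$ corresponds to multiplication by $\rho_g(b)$, so the induced map on bands is exactly $\rho_g \colon A \to A$. Since $\rho_e = \mathrm{id}_A$, we have $T_e = \mathrm{id}_{\trivialgerbe}$, and these assignments meet the first bullet of Definition \ref{def-group-action-on-gerbe}.

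The key observation is that twisting first by $h^{-1}$ and then by $g^{-1}$ produces an $A$-action twisted by $h^{-1} \circ g^{-1} = (gh)^{-1}$, with no change to the underlying sheaf or to sheaf-level maps. Thus $T_g \circ T_h = T_{gh}$ strictly, not merely up to 2-isomorphism. In particular, $\{T_g\}_{g \in G}$ defines an action of $G$ on $\trivialgerbe$ whose underlying action on $A$ is the given one, and we may take every connecting 2-morphism $\alpha_{g,h} \colon T_{gh} \to T_g \circ T_h$ to be the identity. With this choice, the two composite 2-morphisms $T_{g_1 g_2 g_3} \rightrightarrows T_{g_1} \circ T_{g_2} \circ T_{g_3}$ appearing in Definition \ref{def-3-cochain-def-with-group-action-and-choice} are both equal to the identity, so the 3-cochain $\kappa_{(T,\mathrm{id})}$ is identically zero. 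A fortiori, its class $\kappa_T \in \cohomology{3}{G}{A(\xdown)}$ vanishes.

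There is no real obstacle: the content of the lemma is precisely that the natural lift by ``twisting the action'' composes on the nose, and strict associativity of this twisting eliminates the obstruction 3-cocycle entirely without any cohomological computation.
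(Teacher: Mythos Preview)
Your proof is correct and matches the paper's explicit construction essentially verbatim: define $T_g$ by twisting the $A$-action on torsors as in Definition \ref{def-G-action-on-torsors-and-gerbes}, observe that $T_g \circ T_h = T_{gh}$ strictly, and conclude that with identity connecting $2$-morphisms the cocycle $\kappa_{(T,\mathrm{id})}$ vanishes. The paper additionally remarks that the result follows abstractly from Proposition \ref{prop-induced-torsor-and-gerbes}, since $\trivialgerbe$ is induced from the trivial gerbe banded by $\invariants$, but then gives exactly your argument.
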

\begin{proof}
  \begin{sloppypar}
    The trivial gerbe $\trivialgerbe$ is induced from the trivial gerbe banded by $\invariants$. Therefore the lemma follows from Proposition  \ref{prop-induced-torsor-and-gerbes}. We can explicitly construct the morphisms ${\{T_g\}_{g \in G}}$ as follows. Suppose $(U,y)$ in $\objects(\trivialgerbe)$ is an object of $\trivialgerbe$ where $U \in \objects(\thesite)$ and $y$ is an $A|_{U}\mbox{-torsor}$ on $\thesite/U$. Then for $g \in G$, the torsor $T_g(y)$ as in Definition \ref{def-G-action-on-torsors-and-gerbes} is a torsor which as a sheaf is identical to $y$ but the action of $A|_{U}$ is twisted. Then ${T_g: \trivialgerbe \to \trivialgerbe}$ defines a morphism of gerbes which is equivariant with respect to the action of $g$ on $A$. Moreover, for $g,h \in G$, we have equality of functors $T_g \circ T_h = T_{gh}$. Therefore it follows that the obstruction class for this $G\mbox{-action}$ on $\trivialgerbe$ is zero.
  \end{sloppypar}
\end{proof}

\begin{lemma}\label{lemma-Tg-commutation-relation}
  \begin{sloppypar}
    Let $y$ be an $A\mbox{-torsor}$ on $\xdown$. Then the morphisms ${T_g \circ \isom(y,-)}$ and ${\isom(T_g(y),-) \circ T_g}$ are $2\mbox{-isomorphic}$.
  \end{sloppypar}
\end{lemma}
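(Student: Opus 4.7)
The plan is to construct an explicit $2$-morphism between the two composite functors on $\trivialgerbe$ by taking the identity map on underlying sheaves of sets and checking that, in spite of the twist, the equivariance data match.

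Unpacking the definitions, for any object $(U,z)$ of $\trivialgerbe$ (so $z$ is an $A|_U$-torsor on $\thesite/U$) the two composites produce
\[
  (T_g \circ \isom(y,-))(U,z) = \bigl(U,\; T_g(\isom(y|_U, z))\bigr),
\]
\[
  (\isom(T_g(y),-) \circ T_g)(U,z) = \bigl(U,\; \isom(T_g(y)|_U,\, T_g(z))\bigr).
\]
Both underlying sheaves of sets on $\thesite/U$ assign to $V \to U$ the set of $A|_V$-equivariant morphisms of torsors. The first step is to observe that an $A|_V$-equivariant morphism $\phi\colon y|_V \to z|_V$ is \emph{the same datum} as an $A|_V$-equivariant morphism $T_g(y)|_V \to T_g(z)|_V$: the torsor structures on source and target are both twisted by $g^{-1}$, so the equivariance conditions match on the nose. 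This identifies the two sheaves of sets canonically.

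Next I would verify that this identity of underlying sheaves is $A|_U$-equivariant as a map of torsors. On the left-hand side, $T_g(\isom(y|_U,z))$ carries the $A|_U$-action obtained by twisting the natural post-composition action by $g^{-1}$, so $a \cdot \phi = (g^{-1}\cdot a) \circ \phi$. On the right-hand side, $\isom(T_g(y)|_U, T_g(z))$ carries the action induced by post-composition with the $T_g(z)$-action of $a$, which by definition is the original $z$-action of $g^{-1}\cdot a$. So the two actions coincide, and the identification is $A|_U$-equivariant. This gives the value of a candidate $2$-morphism on each object of $\trivialgerbe$.

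For naturality, I would check that for a morphism $(f,\mu)\colon (U,z) \to (V,w)$ in $\trivialgerbe$ (with $\mu\colon f^{-1}w \to z$ a map of $A|_U$-torsors), the square obtained by applying the two composite functors commutes when we plug in the identity isomorphism at each corner; this is immediate because $T_g$ is defined by retaining the underlying sheaf and only altering the action. Finally, collecting the equivariant isomorphisms for varying $(U,z)$ assembles into a base-preserving natural transformation in $\hom_{\thesite}(\trivialgerbe,\trivialgerbe)$, which is the desired $2$-isomorphism.

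The only real obstacle is bookkeeping: keeping careful track of which side of a composition the $g^{-1}$ twist lands on, and making sure that ``the underlying sheaf of sets of $T_g(-)$ equals the underlying sheaf of $(-)$'' is applied consistently. An alternative, cleaner route is to invoke the uniqueness part of Proposition \ref{prop-hom-bands-gerbe-construction}: both composites are morphisms of gerbes that are equivariant with respect to the automorphism $a \mapsto g^{-1}\cdot a$ of $A$, and one can exhibit a $g$-equivariant morphism from the trivial torsor to a common target, so the resulting morphisms must agree up to $2$-isomorphism. I would use whichever presentation reads more cleanly after the direct construction above is written out.
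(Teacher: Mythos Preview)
Your direct construction is correct: the identity on underlying sheaves really does give an $A$-equivariant identification between $T_g(\isom(y|_U,z))$ and $\isom(T_g(y)|_U,T_g(z))$, and the naturality is as automatic as you say. The alternative via Proposition \ref{prop-hom-bands-gerbe-construction} also works.

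The paper takes a different, more structural route. It invokes the $2$-isomorphism $\isom(y,-) \simeq (-)\otimes y^{-1}$ (this is what underlies Remark \ref{remark-composition-of-morphisms-and-addition-law-on-torsors}). Once that is granted, the lemma reduces to the observation that $T_g$ respects the tensor product of torsors, i.e.\ $T_g(z \otimes y^{-1}) \cong T_g(z)\otimes T_g(y)^{-1}$, which is immediate from Definition \ref{def-G-action-on-torsors-and-gerbes}. So the paper's argument is a one-liner modulo the tensor interpretation of $\isom$, whereas your argument is self-contained but requires carrying out the equivariance bookkeeping by hand. Your approach has the advantage of not relying on the unstated compatibility of $T_g$ with $\otimes$; the paper's approach makes the connection to the group law on torsors (and hence to $\cohomology{1}{\xdown}{A}$ as a $G$-module) more transparent.
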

\begin{proof}
  We omit the details but the functor $\isom(y,-)$ (up to $2\mbox{-isomorphism}$) is locally given by ${y' \to y' \otimes y^{-1}}$. The lemma immediately follows from this observation.
\end{proof}
\begin{lemma}\label{lemma-g-action-trivial-gerbe-is-1-cocycle}
  \begin{sloppypar}
    Suppose we have automorphisms $\{F_g\}_{g \in G}$ of the trivial gerbe $\trivialgerbe$ that lift $G\mbox{-action}$ to $\trivialgerbe$. Then we have the following:
  \end{sloppypar}
  \begin{itemize}
    \begin{sloppypar}
    \item There are $A\mbox{-torsors}$ $\{y_g\}_{\in G}$ which are unique up to $A\mbox{-equivariant}$ isomorphisms such that $F_g$ and ${T_g \circ \isom(y_g,-)}$ are $2\mbox{-isomorphic}$. 
    \item The association $g \to y_g$ gives a $1\mbox{-cocycle}$ with values in ${\cohomology{1}{\xdown}{A}}$. Conversely, if $\{y'_g\}_{g \in G}$ is a $1\mbox{-cocycle}$ with values in ${\cohomology{1}{\xdown}{A}}$, then the morphisms ${\{F'_g = T_g \circ \isom(y'_g,-)\}_{g \in G}}$ define a lift of the $G\mbox{-action}$ on $\trivialgerbe$.  
    \item The two cocycles $\{y_g\}_{g \in G}$ and $\{y'_g\}_{g \in G}$ as above are cohomologous if and only if there is an $A\mbox{-equivariant}$ automorphism of $\trivialgerbe$ that is $G\mbox{-equivariant}$ with respect to the actions given by the morphisms $\{F_g\}_{g \in G}$ and $\{F'_g\}_{g \in G}$.
    \end{sloppypar}
  \end{itemize}
\end{lemma}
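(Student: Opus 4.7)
The plan is to exploit Remark \ref{remark_trivialzation_given_by_global_section_for_gerbe}, which classifies $A\mbox{-equivariant}$ endomorphisms of $\trivialgerbe$ up to $2\mbox{-isomorphism}$ by global $A\mbox{-torsors}$ on $\xdown$. Since $F_g$ is equivariant with respect to the action of $g$ on $A$ (by Definition \ref{def-group-action-on-gerbe}) and the morphism $T_g$ constructed in Lemma \ref{lemma-existence-of-tg-with-no-obstruction} is as well, the composite ${T_g^{-1} \circ F_g}$ is $A\mbox{-equivariant}$. Hence by Remark \ref{remark_trivialzation_given_by_global_section_for_gerbe} there exists an $A\mbox{-torsor}$ $y_g$, unique up to $A\mbox{-equivariant}$ isomorphism, with ${T_g^{-1} \circ F_g \simeq \isom(y_g,-)}$, which rearranges to ${F_g \simeq T_g \circ \isom(y_g,-)}$. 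This settles the first bullet.

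For the cocycle condition in the second bullet, I would compare two decompositions of $F_g \circ F_h$. Using the $G\mbox{-action}$ property ${F_{gh} \simeq F_g \circ F_h}$ together with the decomposition above, we obtain
\[
T_{gh} \circ \isom(y_{gh},-) \;\simeq\; T_g \circ \isom(y_g,-) \circ T_h \circ \isom(y_h,-).
\]
Applying Lemma \ref{lemma-Tg-commutation-relation} to commute $\isom(y_g,-)$ past $T_h$, combined with ${T_g \circ T_h \simeq T_{gh}}$ and Remark \ref{remark-composition-of-morphisms-and-addition-law-on-torsors} that identifies ${\isom(-,-) \circ \isom(-,-)}$ with the tensor product, the right-hand side rewrites as ${T_{gh} \circ \isom\bigl(T_h^{-1}(y_g) \otimes y_h,\,-\bigr)}$. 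Cancelling $T_{gh}$ and invoking the uniqueness from the first bullet yields ${y_{gh} \cong T_h^{-1}(y_g) \otimes y_h}$, which is precisely the inhomogeneous $1\mbox{-cocycle}$ condition for $\{y_g\}$ regarded as a cochain with values in the $G\mbox{-module}$ ${\cohomology{1}{\xdown}{A}}$ (with the $G\mbox{-action}$ induced by the $T_g$'s, as noted in Remark \ref{remark-tg-operators-define-action-on-sheaf-cohomology}). The converse is obtained by running the same computation backwards: given a cocycle $\{y'_g\}$, set ${F'_g := T_g \circ \isom(y'_g,-)}$, and the cocycle relation supplies the required $2\mbox{-isomorphisms}$ ${F'_{gh} \simeq F'_g \circ F'_h}$.

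For the third bullet, any $A\mbox{-equivariant}$ automorphism of $\trivialgerbe$ is $2\mbox{-isomorphic}$ to $\isom(z,-)$ for a unique (up to isomorphism) $A\mbox{-torsor}$ $z$. Checking when such an automorphism intertwines $\{F_g\}$ and $\{F'_g\}$ amounts to comparing ${\isom(z,-) \circ F_g}$ with ${F'_g \circ \isom(z,-)}$. Substituting the decompositions ${F_g \simeq T_g \circ \isom(y_g,-)}$ and ${F'_g \simeq T_g \circ \isom(y'_g,-)}$ and again applying Lemma \ref{lemma-Tg-commutation-relation} and Remark \ref{remark-composition-of-morphisms-and-addition-law-on-torsors} to normalise both sides to the form ${T_g \circ \isom(-,-)}$, the uniqueness clause of the first bullet reduces the intertwining condition to ${y'_g \otimes z \cong T_g^{-1}(z) \otimes y_g}$ for every $g$, which is exactly the assertion that $\{y_g\}$ and $\{y'_g\}$ differ by the coboundary of the $0\mbox{-cochain}$ $\{z\}$ in the group cohomology ${\cohomology{*}{G}{\cohomology{1}{\xdown}{A}}}$.

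The main obstacle I anticipate is bookkeeping with the twisting operators $T_g$: one must verify that the convention of Definition \ref{def-G-action-on-torsors-and-gerbes} (twisting the $A\mbox{-action}$ by $g^{-1}$) is consistent with the standard left $G\mbox{-module}$ structure on ${\cohomology{1}{\xdown}{A}}$ so that the derived identity ${y_{gh} \cong T_h^{-1}(y_g) \otimes y_h}$ matches the usual inhomogeneous $1\mbox{-cocycle}$ condition rather than its opposite. This is a routine but notationally delicate check of left versus right actions, and once done, all three bullets follow from the same chain of reductions.
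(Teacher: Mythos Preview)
Your proposal is correct and follows essentially the same route as the paper's own proof. The paper is much terser---it cites Remark \ref{remark_trivialzation_given_by_global_section_for_gerbe} and Proposition \ref{prop-hom-bands-gerbe-construction} for the first bullet, then simply says the second part is ``straightforward to verify using the commutation relation established in Lemma \ref{lemma-Tg-commutation-relation} (See Remark \ref{remark-composition-of-morphisms-and-addition-law-on-torsors})'' and the third part is a direct check with $\isom(y,-)$---so your write-up is exactly the computation the paper leaves to the reader, including the convention check you flag at the end.
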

\begin{proof}
  We know that any $A\mbox{-equivariant}$ automorphism of $\trivialgerbe$ is $2\mbox{-isomorphic}$ to an isom functor (See Remark \ref{remark_trivialzation_given_by_global_section_for_gerbe}). Therefore by Proposition \ref{prop-hom-bands-gerbe-construction} we have the existence and uniqueness of the torsors $\{y_g\}_{g \in G}$ as claimed. The second part is straightforward verify using the commutation relation established in Lemma \ref{lemma-Tg-commutation-relation} (See Remark \ref{remark-composition-of-morphisms-and-addition-law-on-torsors}). For the third part of the lemma, we can check that the $1\mbox{-cocycles}$ $\{y_g\}$ and $\{y'_g\}$ differ by the coboundary of a $\mbox{0-cochain}$ $\{y\}$ if and only if $\isom(y,-)$ gives a $G\mbox{-equivariant}$ morphism with respect to the two actions. 
\end{proof}
\begin{sloppypar}
  Thus we see that lifts of the $G\mbox{-action}$ on $\trivialgerbe$ are parameterized by the group $\cohomology{1}{G}{\cohomology{1}{\xdown}{A}}$. We can identify certain lifts where the associated obstruction class is zero. Suppose for $g \in G$, we have an $\invariants\mbox{-torsor}$ $\overline{y}_g$ such that ${g \to \overline{y}_g}$ defines a group homomorphism ${G \to \cohomology{1}{\xdown}{\invariants}}$. Let $y_g$ denote the $A\mbox{-torsor}$ obtained by inducing $\overline{y}_g$ from $\invariants$ to $A$. Then each torsor $y_g$ represents a sheaf cohomology class in ${\cohomology{1}{\xdown}{A}^G}$. Consequently, we see that ${\{y_g\}_{g \in G}}$ is a $1\mbox{-cocycle}$ corresponding to  a cohomology class in ${\cohomology{1}{G}{\cohomology{1}{\xdown}{A}}}$. Let $F_g$ denote the automorphism ${T_g \circ \isom(y_g,-)}$ as in Lemma \ref{lemma-g-action-trivial-gerbe-is-1-cocycle}. Then the morphisms ${\{F_g\}_{g \in G}}$ define a lift of $G\mbox{-action}$ on $\trivialgerbe$. 
\end{sloppypar}
\begin{proposition}\label{prop_tg_induced_yg_has_no_obstruction}
The $G\mbox{-action}$ on $\trivialgerbe$ defined by the morphisms ${\{F_g\}_{g \in G}}$ as above has trivial obstruction class. 
\end{proposition}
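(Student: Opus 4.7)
The plan is to reduce to Proposition \ref{prop-trivial-action-on-trivial-gerbe} (trivial underlying action on the trivial gerbe) via the functoriality Proposition \ref{prop-g-equivariant-map-and-3-cochain}. The key is to construct an auxiliary $G$-action on the trivial $\invariants$-gerbe $\trivialaggerbe$ and then to show that the induction functor intertwines it with $\{F_g\}_{g \in G}$.

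For each $g \in G$, I set $\overline{F}_g := \isom(\overline{y}_g, -)$, an $\invariants$-equivariant automorphism of $\trivialaggerbe$. The hypothesis that $g \mapsto [\overline{y}_g]$ is a group homomorphism into $\cohomology{1}{\xdown}{\invariants}$, combined with Remark \ref{remark-composition-of-morphisms-and-addition-law-on-torsors}, provides $2$-isomorphisms $\overline{F}_{gh} \simeq \overline{F}_g \circ \overline{F}_h$, so $\{\overline{F}_g\}_{g \in G}$ is a bona fide $G$-action whose underlying action on $\invariants$ is trivial. Proposition \ref{prop-trivial-action-on-trivial-gerbe} (applied with $\invariants$ in place of $A$) then gives $\kappa_{\overline{F}} = 0$ in $\cohomology{3}{G}{\invariants(\xdown)}$.

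It remains to show that the induction functor $\mathrm{Ind}: \trivialaggerbe \to \trivialgerbe$, defined by $\overline{z} \mapsto \overline{z} \otimes_{\invariants} A$, is $G$-equivariant with respect to $\{\overline{F}_g\}$ and $\{F_g\}$; once this is done, Proposition \ref{prop-g-equivariant-map-and-3-cochain} applied to $\mathrm{Ind}$ yields $\kappa_F = i_{*}(\kappa_{\overline{F}}) = 0$. The verification of $G$-equivariance rests on two natural isomorphisms: first, since induction is a group homomorphism on $\cohomology{1}{\xdown}{-}$, there is a natural $A$-equivariant isomorphism $\isom(y_g, \overline{z} \otimes_{\invariants} A) \simeq \isom(\overline{y}_g, \overline{z}) \otimes_{\invariants} A$; second, because $G$ acts trivially on $\invariants$, the action map $\rho_g: A \to A$ induces a canonical $A$-equivariant isomorphism $T_g(\overline{w} \otimes_{\invariants} A) \simeq \overline{w} \otimes_{\invariants} A$ for any $\invariants$-torsor $\overline{w}$ (given by applying $\rho_g$ to the $A$-factor). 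Composing these gives the pointwise comparison $F_g(\mathrm{Ind}(\overline{z})) \simeq \mathrm{Ind}(\overline{F}_g(\overline{z}))$. The main obstacle is checking that these identifications are natural in $\overline{z}$, so that they assemble into a base-preserving $2$-morphism of fibered categories rather than merely pointwise isomorphisms on objects; this is essentially a diagram chase using the universal property of induction and the definition of the twisted torsor $T_g$.
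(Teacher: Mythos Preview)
Your proposal is correct and follows essentially the same approach as the paper: define $\overline{F}_g = \isom(\overline{y}_g,-)$ on $\trivialaggerbe$, invoke Proposition \ref{prop-trivial-action-on-trivial-gerbe} to kill $\kappa_{\overline{F}}$, and then push forward along the $i$-equivariant induction functor using Proposition \ref{prop-g-equivariant-map-and-3-cochain}. The paper's verification of $G$-equivariance of induction is phrased via two $2$-commutative squares (one for $T_g \circ F \simeq F$ and one for $F \circ \isom(\overline{y}_g,-) \simeq \isom(y_g,-)\circ F$), which is exactly your two natural isomorphisms repackaged.
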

\begin{proof}
  \begin{sloppypar}
    Let $\trivialaggerbe$ denote the category of ${\invariants\mbox{-torsors}}$ which is the trivial torsor banded by $\invariants$. We have an $i\mbox{-equivariant}$ morphism ${F: \trivialaggerbe \to \trivialgerbe}$ such that for all $g$ in $G$, the morphisms $F$ and $T_g \circ F$ are $2\mbox{-isomorphic}$. Since $y_g$ is induced from the $\invariants\mbox{-torsor}$ $\overline{y}_g$, the following diagram $2\mbox{-commutes}$:
  \[
    \begin{tikzcd}
      \trivialaggerbe \arrow{r}{F} \arrow{d}[swap]{\isom(\overline{y_g},-)} & \trivialgerbe \arrow{d}{\isom(y_g,-)} \\
      \trivialaggerbe \arrow{r}{F} & \trivialgerbe
    \end{tikzcd}
\]
It follows that if we define ${\overline{F_g} = \isom(\overline{y_g},-)}$, then we have a $2\mbox{-commutative}$ diagram as below:
\[
  \begin{tikzcd}
    \trivialaggerbe \arrow{r}{F} \arrow{d}[swap]{\overline{F_g}} & \trivialgerbe \arrow{d}{F_g} \\
    \trivialaggerbe \arrow{r}{F} & \trivialgerbe
  \end{tikzcd}
\]
We have an equivariant action of $G$ on $\trivialaggerbe$ given by the morphisms ${\{\overline{F_g}\}_{g \in G}}$. We note that $F$ gives a ${G\mbox{-equivariant}}$ morphism ${\trivialaggerbe \to \trivialgerbe}$. From Proposition \ref{prop-trivial-action-on-trivial-gerbe}, we know that the obstruction class for the action on $\trivialaggerbe$ vanishes. Therefore it follows that the obstruction class for the action on $\trivialgerbe$ is zero as well (See Proposition \ref{prop-g-equivariant-map-and-3-cochain}).
\end{sloppypar}
\end{proof}

}

\section{Hoschild-Serre spectral sequence}
{ 
\newcommand{\trivialtorsor}{y^{triv}}
\newcommand{\constantg}{\underline{G}}
\newcommand{\groupring}{\integers[G]}
\newcommand{\gsets}{{G}\mbox{-\textit{sh}}(\xdown_{\tau})}
\newcommand{\gmodules}{\textit{Mod}(\mathbb{Z}[\underline{G}])}
\newcommand{\topsite}{X_{top}}
\newcommand{\pline}{\projective^{1}}

\begin{sloppypar}
  A typical situation where we have a sheaf $A$ with a group action occurs in the following scenario. Suppose $\xdown$ is a scheme and we consider the small \etale{} site of $\xdown$. Let ${\xup \xrightarrow{\pi}\xdown}$ be a Galois cover with the Galois group $G$. Then for any sheaf $E$ on $\xup$, the sheaf $\pi_*(E)$ carries a natural $G$ action. Moreover, the higher group cohomology locally vanishes for all indices for this action. In this section, we mimic this scenario in a more general setting of an arbitrary site. 

We denote by $\constantg$ the constant group sheaf with value $G$ on $\thesite$. We denote by $\gsets$ the category whose objects are sheaves with an action of $G$ and the morphisms are ${G\mbox{-equivariant}}$ morphisms. Similarly, we denote by $\gmodules$ the abelian category whose objects are abelian sheaves with an action of $G$ and the morphisms are ${G\mbox{-equivariant}}$ morphisms of abelian sheaves. Let $M$ be a left ${\constantg\mbox{-torsor}}$ (Definition \ref{definition-a-torsor}). We have an action of $G$ on $M$ which is given via the sheafification map ${G \to \constantg}$.
\end{sloppypar}
\begin{remark}\label{remark-constangg-vs-g-sheaves-confusion}
Suppose $B$ is a sheaf with an action of $G$. Thus we have a map of presheaves
${G \times B \to B}$. Since sheafification is exact, we have a bijection:
$$ \morphisms_{\presheaves(\thesite)}(G \times B,B) \cong \morphisms_{\sheaves(\thesite)}(\constantg \times B, B) .$$
Thus we see that an action of $G$ induces an action of $\constantg$. We can define a category whose objects are sheaves with an action of $\constantg$ and morphisms are $\constantg\mbox{-equivariant}$ morphisms. This category is equivalent to $\gsets$. 
\end{remark}
\begin{definition}
Suppose $E$ is a sheaf on $\thesite$. We define $E[M]$ as the internal hom sheaf (See \cite[\href{https://stacks.math.columbia.edu/tag/04TP}{Tag 04TP}]{stacks-project}) which assigns to every $U \in \objects(\thesite)$, the set ${\morphisms_{Sh(\xdown_{\tau}/U)}(M|_{U},E|_{U})}$. We have a left action of $G$ on $E[M]$ which is locally defined by ${(g \cdot f)(m) = f( g^{-1} \cdot m)}$. When $E$ is an abelian sheaf, so is $E[M]$. The abelian group structure on $E[M]$ is given via the evaluation map to $E$. It is clear that in this case the group $G$ acts by group homomorphisms.
\end{definition}
\begin{lemma}\label{lemma-when-M-is-trivial-torsor}
Suppose $M \cong \constantg$ is the trivial torsor. Then we have an isomorphism of sheaves ${E[M] \cong \prod_{g \in G} E}$. Moreover, if we let $G$ act on the right hand side by permuting the indices, then the isomorphism is $G\mbox{-equivariant}$. 
\end{lemma}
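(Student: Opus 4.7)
The plan is to exploit the fact that for a finite group $G$, the constant sheaf $\constantg$ is canonically the coproduct $\coprod_{g \in G} *$ of $|G|$ copies of the terminal sheaf in $\sheaves(\thesite)$. Denoting by $\iota_g : * \to \constantg$ the inclusion corresponding to the constant section $g \in G$, I will apply the universal property of coproducts together with the defining property of the internal hom sheaf to produce a chain of natural isomorphisms
\[
    E[\constantg] \;=\; \sheafhom(\constantg, E) \;\cong\; \sheafhom\!\left(\coprod_{g \in G} *,\, E\right) \;\cong\; \prod_{g \in G} \sheafhom(*, E) \;\cong\; \prod_{g \in G} E.
\]
Explicitly, for $U \in \objects(\thesite)$, the isomorphism sends a morphism $f : \constantg|_U \to E|_U$ to the tuple $(f \circ \iota_g|_U)_{g \in G} \in \prod_{g \in G} E(U)$. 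The inverse assigns to a tuple of sections the unique morphism out of the coproduct that restricts to those sections on each summand.

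Next I would verify $G$-equivariance. The $G$-action on $\constantg$ (as the trivial $\constantg$-torsor) is left multiplication by constant sections, so $h \in G$ acts on $\constantg$ by permuting the summands: $h \cdot \iota_g = \iota_{hg}$. Unwinding the definition $(h \cdot f)(m) = f(h^{-1} \cdot m)$, evaluating $h \cdot f$ on the $g$-th summand gives $f(\iota_{h^{-1}g})$. Under the isomorphism above, this means $h$ sends $(e_g)_{g \in G}$ to $(e_{h^{-1}g})_{g \in G}$, which is exactly the index-permutation action of $G$ on $\prod_{g \in G} E$.

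The main (and essentially only) subtle point is the identification $\constantg \cong \coprod_{g \in G} *$ as sheaves: this is not true at the presheaf level, but after sheafification and using finiteness of $G$ it holds, and then the rest is formal manipulation with the universal properties of coproducts and internal hom. Everything is functorial in $U$, so the constructed map is automatically a morphism of sheaves. No further cohomological input is required, and the equivariance check reduces to a bookkeeping calculation with indices.
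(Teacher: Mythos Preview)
Your proof is correct and follows essentially the same idea as the paper's: both reduce to the fact that morphisms of sheaves out of $\constantg$ are the same as morphisms out of the constant presheaf $G$, which are just $G$-indexed tuples of sections. The paper invokes the sheafification adjunction directly, while you package the same fact as the identification $\constantg \cong \coprod_{g\in G} *$ and then use the universal property of coproducts; your version also spells out the $G$-equivariance check, which the paper leaves implicit.
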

\begin{proof}
  \begin{sloppypar}
    By the universal property of sheafification, for $U \in \objects(\thesite)$, we have
    $$ \morphisms_{\mathit{Psh}(\xdown_{et})}(G|_{U},E|_{U}) \cong \morphisms_{\sheaves(\xdown_{et})}(\constantg|_{U},E|_U) $$
    where by $G|_{U}$ we mean the constant presheaf with value $G$ on $\thesite/U$. Clearly, the left hand side is equal to ${\prod_{g \in G} E|_U}$. These identifications glue to give the desired isomorphism as claimed.
  \end{sloppypar}
\end{proof}
\begin{corollary}\label{corollary-local-group-cohomology-vanishes}
 Let $E$ be an abelian sheaf on $\thesite$. Then the group cohomology locally vanishes for all indices $j > 0$ for the action of $G$ on $E[M]$. 
\end{corollary}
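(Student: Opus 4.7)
The plan is to use the local triviality of the $\constantg$-torsor $M$ together with Lemma \ref{lemma-when-M-is-trivial-torsor} to reduce to the case of a coinduced $G$-module, which has vanishing higher group cohomology.

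First I would fix an arbitrary object $U$ of $\thesite$ and produce a cover $\{U_i \to U\}_{i \in I}$ witnessing the required vanishing. Since $M$ is a $\constantg$-torsor, by Definition \ref{definition-a-torsor} there is a cover $\{U_i \to U\}$ such that $M(U_i)$ is non-empty for every $i$. Choosing a section $s_i \in M(U_i)$ determines a $\constantg|_{U_i}$-equivariant isomorphism $\constantg|_{U_i} \xrightarrow{\sim} M|_{U_i}$ sending $g \mapsto g \cdot s_i$.

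Next I would transport this trivialization through the internal-Hom construction $E[-]$. The isomorphism $\constantg|_{U_i} \cong M|_{U_i}$ induces a $G$-equivariant isomorphism of sheaves $E[M]|_{U_i} \cong E[\constantg]|_{U_i}$, where on the right $G$ acts by precomposition with $g^{-1}$ on $\constantg$. By Lemma \ref{lemma-when-M-is-trivial-torsor} the right side is isomorphic, as a sheaf with $G$-action, to $\prod_{g \in G} E|_{U_i}$, with $G$ acting by permutation of indices. Evaluating on $U_i$ gives a $G$-equivariant identification
\[
E[M](U_i) \;\cong\; \prod_{g \in G} E(U_i),
\]
where again $G$ acts by permuting the factors.

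Finally I would invoke the standard fact that $\prod_{g \in G} E(U_i)$ is exactly the coinduced module $\operatorname{Coind}_{\{1\}}^{G} E(U_i)$ from the trivial subgroup. Since $G$ is finite, coinduced and induced modules coincide, and both have vanishing higher group cohomology: $\cohomology{j}{G}{\prod_{g \in G} E(U_i)} = 0$ for all $j > 0$ (this is Shapiro's lemma applied to the trivial subgroup, or a direct acyclicity computation using the bar resolution). Therefore $\cohomology{j}{G}{E[M](U_i)} = 0$ for all $j > 0$ and all $i \in I$, which verifies the criterion of Definition \ref{def-group-cohomlogies-vanish-locally} for the $j$-th group cohomology locally vanishing. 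The only step that requires any care is checking that the trivialization $\constantg|_{U_i} \cong M|_{U_i}$ induces the permutation action on the product in Lemma \ref{lemma-when-M-is-trivial-torsor}, but this is exactly the content of that lemma, so no obstacle remains.
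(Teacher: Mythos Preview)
Your proof is correct and follows essentially the same approach as the paper: trivialize $M$ on a cover, apply Lemma \ref{lemma-when-M-is-trivial-torsor} to identify $E[M](U_i)$ with the permutation module $\prod_{g\in G} E(U_i)$, and invoke the vanishing of higher group cohomology for such (co)induced modules. The paper's proof is terser but the logic is identical; your explicit mention of Shapiro's lemma simply spells out what the paper leaves implicit.
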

\begin{proof}
By the above Lemma \ref{lemma-when-M-is-trivial-torsor}, for $U \in \objects(\thesite)$, we have a cover $\{U_i \to U\}_{i \in I}$ such that $E[M]|_{U_i}$ is isomorphic to ${\prod_{g \in G} {E}|_{U_i}}$ where $G$ acts by permuting the indices. Therefore for all $j > 0$, the cohomology groups $\cohomology{j}{G}{E[M](U_i)}$ vanish. 
\end{proof}
\begin{lemma}\label{lemma-e[m]-is-exact-functor}
  \begin{sloppypar} The functor ${\absheaves(\thesite) \to \gmodules}$ defined by ${E \to E[M]}$ is an exact functor.
  \end{sloppypar}
\end{lemma}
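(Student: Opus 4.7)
The plan is to reduce exactness to a purely local computation, exploiting the fact that $M$ is a torsor (hence locally trivial as a $\constantg\mbox{-sheaf}$) together with the finiteness of $G$. Since the forgetful functor $\gmodules \to \absheaves(\thesite)$ is exact and faithful, it suffices to show that $E \mapsto E[M]$ is an exact functor of underlying abelian sheaves; the $G\mbox{-equivariance}$ of the constructed morphisms is automatic from the functoriality of the internal hom.

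First, I would observe that left exactness is automatic: for any sheaf of sets $M$, the internal hom functor $\sheafhom(M,-)$ is left exact on $\absheaves(\thesite)$ (this is a general property of internal hom and can be checked section-wise, since $\morphisms(M|_U,-)$ is left exact on sets/abelian groups). So the only real content is right exactness, i.e.\ that whenever $E_2 \to E_3$ is a surjection of abelian sheaves, the induced morphism $E_2[M] \to E_3[M]$ is again a surjection of sheaves.

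Next, I would recall that surjectivity of a morphism of sheaves is a local property: a morphism $\phi$ is surjective iff there exists a cover of each object on which $\phi$ becomes an epimorphism of presheaves (equivalently, on which every section of the target lifts). Given an object $U$ of $\thesite$, the torsor $M$ admits a section over some cover $\{U_i \to U\}_{i \in I}$, so on each $U_i$ we have $M|_{U_i} \cong \constantg|_{U_i}$. By Lemma \ref{lemma-when-M-is-trivial-torsor}, on this cover we obtain $E[M]|_{U_i} \cong \prod_{g \in G} E|_{U_i}$, functorially in $E$. Since $G$ is finite, this is a finite product of copies of the identity functor, and finite products in $\absheaves(\thesite)$ preserve surjections. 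Hence $E_2[M]|_{U_i} \to E_3[M]|_{U_i}$ is surjective for every $i$, which shows that $E_2[M] \to E_3[M]$ is a surjection of sheaves.

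The only subtle point (and the main thing to double-check) is that the local isomorphism $E[M]|_{U_i} \cong \prod_{g \in G} E|_{U_i}$ is natural in $E$, so that surjectivity of $E_2 \to E_3$ really does transfer to the product description. This is immediate from the construction in Lemma \ref{lemma-when-M-is-trivial-torsor}, where the isomorphism is given by evaluation at the $|G|$ sections of $M|_{U_i}$, an operation plainly functorial in the target sheaf $E$. Combining left exactness with this local right exactness yields the desired exactness of $E \mapsto E[M]$.
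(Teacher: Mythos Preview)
Your proposal is correct and follows essentially the same approach as the paper: reduce to the underlying abelian sheaves, use local triviality of the torsor $M$ together with Lemma~\ref{lemma-when-M-is-trivial-torsor} to identify $E[M]$ locally with the finite product $\prod_{g \in G} E$ (functorially in $E$), and conclude exactness since finite products are exact. The paper's proof is more terse and does not separate out left and right exactness, but the argument is the same.
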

\begin{proof}
  \begin{sloppypar}
    We note that a complex in $\gmodules$ is exact if and only if the underlying complex of abelian sheaves is exact. If $M$ is a trivial torsor then we have an isomorphism ${E[M] \cong \prod_{g \in G} E}$ which is functorial in $E$. Since $M$ is locally trivial, we conclude that the functor $E \to E[M]$ is locally exact and therefore exact.
  \end{sloppypar}
\end{proof}
\begin{lemma}\label{lemma-invariants-of-E[M]}
  \begin{sloppypar}
    We have an injective morphism of sheaves $E \to E[M]$ that identifies $E$ as the sheaf of invariants of $E[M]$ under the action of $G$.
  \end{sloppypar}
\end{lemma}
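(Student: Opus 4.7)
The plan is to construct the morphism $E \to E[M]$ as the natural ``constant section'' map, verify that it lands in the $G$-invariants, and then check both injectivity and the identification of $E$ with $E[M]^G$ by reducing to the locally trivial case handled by Lemma~\ref{lemma-when-M-is-trivial-torsor}.

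First, for $U \in \objects(\thesite)$ and $e \in E(U)$, I would send $e$ to the morphism $\phi_e \colon M|_U \to E|_U$ in $\sheaves(\thesite/U)$ given as the composite $M|_U \to h_U \to E|_U$, where $h_U$ denotes the terminal sheaf on $\thesite/U$, the first arrow is the unique morphism to the terminal object, and the second is the morphism classified by the section $e$. This assignment is functorial in $U$ and defines a morphism of sheaves $E \to E[M]$. The map $\phi_e$ is $G$-invariant: the $G$-action on $E[M]$ is locally given by $(g \cdot f)(m) = f(g^{-1} \cdot m)$, and since $\phi_e$ factors through $h_U$ on which $G$ acts trivially, we have $g \cdot \phi_e = \phi_e$. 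Hence the map $E \to E[M]$ factors through $E[M]^G$.

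Second, both the injectivity of $E \to E[M]^G$ and its surjectivity are statements about sheaves on $\thesite$ and can be checked after passing to a cover. Since $M$ is a $\constantg$-torsor, for each $U \in \objects(\thesite)$ there is a cover $\{U_i \to U\}$ over which $M$ trivializes, so by Lemma~\ref{lemma-when-M-is-trivial-torsor} we obtain a $G$-equivariant isomorphism $E[M]|_{U_i} \cong \prod_{g \in G} E|_{U_i}$ with $G$ acting by permuting the factors. Under this isomorphism the morphism $E \to E[M]$ becomes the diagonal $e \mapsto (e)_{g \in G}$: indeed, under the identification $\morphisms(\constantg|_{U_i}, E|_{U_i}) \cong \prod_{g \in G} E|_{U_i}$ sending $f$ to $(f(g))_g$, the constant morphism with value $e$ corresponds precisely to the tuple $(e, e, \dots, e)$. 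The diagonal map is manifestly injective, and the invariants of $\prod_{g \in G} E|_{U_i}$ under the permutation action are exactly the image of the diagonal, giving both required properties locally.

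The step I expect to need the most care is the explicit matching between the globally defined $\phi_e$ and the diagonal inclusion after applying the trivialization of Lemma~\ref{lemma-when-M-is-trivial-torsor}; once this compatibility is nailed down the injectivity and the invariants identification fall out of the elementary linear-algebraic fact that the fixed locus of the symmetric-group-style permutation action on $\prod_{g \in G} E$ is the diagonal. No further input is needed beyond the exactness of sheafification (already used in Remark~\ref{remark-constangg-vs-g-sheaves-confusion}) to reconcile $G$-equivariance with $\constantg$-equivariance.
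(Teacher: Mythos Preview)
Your proposal is correct and follows essentially the same approach as the paper: define the constant-section map $E \to E[M]$ and then use the local identification $E[M] \cong \prod_{g \in G} E$ from Lemma~\ref{lemma-when-M-is-trivial-torsor} to verify injectivity and the invariants statement. The paper's proof is terser (it does not spell out the factorization through the terminal sheaf or the diagonal description), but the underlying argument is identical.
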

\begin{proof}
  \begin{sloppypar}
    For an object $U$ of $\thesite$, we can define a map ${E(U) \to \morphisms_{Sh(\thesite/U)}(M|_{U},E|_{U})}$ which sends an element $e$ to the morphism which on each object is a constant map with the value equal to the restriction of $e$. Thus we have a morphism ${E \to E[M]}$. We know $E[M]$ is locally isomorphic to $\prod_{g \in G} E$ on which $G$ acts by permuting the indices. Thus we see that this map is locally injective and isomorphic onto the fixed points of $E[M]$.
    \end{sloppypar}
  \end{proof}
  
  \begin{sloppypar}
    The functor ``internal morphisms to M'' is right adjoint to the functor ${- \times M}$ (See \cite[\href{https://stacks.math.columbia.edu/tag/0BWQ}{Tag 0BWQ}]{stacks-project}). That is, for a sheaf $B$ on $\thesite$, we have a bijection
    $$ \morphisms_{\sheaves(\thesite)}(B,E[M]) \cong \morphisms_{\sheaves(\thesite)}(B \times M, E)$$
 which is functorial in all three entrees. Suppose $B$ is a sheaf on which we have a left action of the group $G$. As in Remark \ref{remark-constangg-vs-g-sheaves-confusion}, we consider the associated action of $\constantg$ on $B$. For $U \in \objects(\thesite)$, we define an equivalence relation on $B(U) \times M(U)$ by declaring ${(b,m) \sim \left(g \cdot b ,g \cdot m\right)}$ for all ${g \in \constantg(U)}$.
    It is clear that the equivalence classes modulo this relation define a presheaf on $\thesite$. We denote this presheaf by $\sfrac{B \times M }{\sim}$ and we define $B \times_{G} M$ as the sheaffication of this presheaf. 
  \end{sloppypar}
  \begin{lemma}\label{lemma-right-adjoint-to-G-sheaves}
  The functor
  \begin{align*}
    - \times_{G} M : \gsets & \to \sheaves(\thesite) \\
                              B & \to B \times_{G} M
  \end{align*}
is left adjoint to the functor
  \begin{align*}
    -[M] : \sheaves(\thesite) & \to \gsets \\
                              E & \to E[M]
  \end{align*}
\end{lemma}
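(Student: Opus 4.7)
The plan is to reduce the desired adjunction to the classical adjunction between $- \times M$ and the internal Hom $-[M]$ that was already recalled just above the statement, and then identify on each side what the $G$-action constrains. So I would begin with the natural bijection
$$ \morphisms_{\sheaves(\thesite)}(B \times M, E) \;\cong\; \morphisms_{\sheaves(\thesite)}(B, E[M]), $$
sending $\tilde{\phi} : B \times M \to E$ to the morphism $\phi : B \to E[M]$ whose value on a local section $b \in B(U)$ is the morphism $m \mapsto \tilde{\phi}(b,m)$ from $M|_U$ to $E|_U$. This is just an unwinding of the universal property of the internal hom sheaf.

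Next I would translate the $G$-equivariance of $\phi : B \to E[M]$ across this bijection. Unwinding the definition of the action on $E[M]$, which is given locally by $(g \cdot f)(m) = f(g^{-1} \cdot m)$, the equivariance $\phi(g \cdot b) = g \cdot \phi(b)$ says that $\phi(g \cdot b)(m) = \phi(b)(g^{-1} \cdot m)$ for local sections $b \in B(U)$, $m \in M(U)$, and $g \in \constantg(U)$. Under the adjunction this becomes $\tilde{\phi}(g \cdot b, m) = \tilde{\phi}(b, g^{-1} \cdot m)$, and after substituting $m \mapsto g \cdot m$ we obtain the symmetric form $\tilde{\phi}(g \cdot b, g \cdot m) = \tilde{\phi}(b,m)$. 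In other words, $\phi$ is $G$-equivariant exactly when $\tilde{\phi}$ is constant on the equivalence classes $(b,m) \sim (g \cdot b, g \cdot m)$ that define the presheaf ${(B \times M)}/{\sim}$.

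Finally, by the universal property of sheafification, morphisms $B \times_G M \to E$ (as sheaves) are in natural bijection with morphisms of presheaves $(B \times M)/\!\sim\; \to E$, hence with morphisms of sheaves $B \times M \to E$ that are invariant under the diagonal $\constantg$-action described above. Combining the three identifications yields the desired natural bijection
$$ \morphisms_{\sheaves(\thesite)}(B \times_G M, E) \;\cong\; \morphisms_{\gsets}(B, E[M]). $$

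The main subtlety is bookkeeping: the action on $E[M]$, and hence the equivariance condition, is formulated locally in terms of sections $g \in \constantg(U)$ rather than elements of $G$ (cf.\ Remark \ref{remark-constangg-vs-g-sheaves-confusion}), so I would verify that the equivalence between equivariance of $\phi$ and $\sim$-invariance of $\tilde{\phi}$ is compatible with restriction maps and therefore glues. Naturality of the bijection in both $B$ and $E$ then follows immediately from the naturality of the internal-hom adjunction and of sheafification.
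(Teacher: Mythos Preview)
Your proposal is correct and follows essentially the same approach as the paper: both start from the internal-hom adjunction $\morphisms_{\sheaves}(B \times M, E) \cong \morphisms_{\sheaves}(B, E[M])$, translate $G$-equivariance of $\phi$ into the diagonal-invariance condition $\tilde{\phi}(g \cdot b, g \cdot m) = \tilde{\phi}(b,m)$ (invoking Remark~\ref{remark-constangg-vs-g-sheaves-confusion} to pass between $G$ and $\constantg$), and then factor through the presheaf quotient and its sheafification. The paper phrases the last step as first obtaining an injection and then arguing surjectivity by running the steps backward, whereas you package it as a single chain of bijections, but the content is the same.
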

\begin{proof}
  \begin{sloppypar}
    In other words, we want to prove that there is a bijection 
    $$ \morphisms_{\gsets}(B,E[M]) \cong \morphisms_{\sheaves(\thesite)}(B \times_{G} M,E) $$  
    which is functorial with respect to $B$ and $E$. We have a bijection
    \begin{align*}
      \morphisms_{\sheaves}(B,E[M]) &\cong  \morphisms_{\sheaves}(B \times M,E) \end{align*}
    which we denote by $f \to \widetilde{f}$. For $U \in \objects(\thesite)$, and ${b \in B(U)}$, we have the morphism ${f(b): M|_U \to E|_U}$ of sheaves over $\thesite/U$. The morphism $\widetilde{f}$ on $U$ is given by 
    \begin{align*}
      \widetilde{f}: B(U)\times M(U) & \to E(U) \\
      (b,m) & \to (f(b))(m)
    \end{align*}
    By Remark \ref{remark-constangg-vs-g-sheaves-confusion}, we note that $f$ is $G\mbox{-equivariant}$ if and only if it is equivariant with respect to the action of $\constantg$. If this is the case, then we see that for all ${(b,m) \in B(U) \times M(U)}$ and $g \in \constantg(U)$, we have ${\widetilde{f}(b,m) =  \widetilde{f}(g \cdot b,g \cdot m)}$. Since ${B \times M \to \sfrac{B \times M}{\sim}}$ is an epimorphism of presheaves, we see that $\widetilde{f}$ uniquely factors through ${\sfrac{B \times M}{\sim}}$. By the universal property of sheafification, it factors uniquely through the sheaf $B \times_{G} M$. Thus we get an injective map
    $$ \morphisms_{\gsets}(B,E[M]) \to \morphisms_{\sheaves(\thesite)}(B \times_{G} M,E) $$ 
By following the same argument backward, we see that it is surjective as well.
  \end{sloppypar}
\end{proof}
\begin{sloppypar}
  Suppose $B$ is an abelian sheaf on $\thesite$ on which $G$ acts by group homomorphisms. Then the sheaf $B \times_{G} M$ also has a structure of an abelian sheaf. This can be seen as follows. Suppose $U$ is an object of $\thesite$ and we have ${(b_1,m_1),(b_2,m_2) \in B(U) \times M(U)}$. For any $m_3 \in M(U)$, there are unique elements $g_1,g_2 \in \constantg(U)$ such that ${g_1 \cdot m_1 = m_3}$ and ${g_2 \cdot m_2 = m_3}$. Then we define 
  $$ (b_1,m_1) + (b_2,m_2) = (g_1 \cdot b_1 + g_2 \cdot b_2,m_3).$$
Thus we see that $\sfrac{B \times M}{\sim}$ is an abelian  presheaf. After sheafification, we get a structure of an abelian sheaf on $B \times_{G} M.$ 
\end{sloppypar}
  \begin{remark} \label{remark-on-construction-of-b-x-m-mod-G}
    We can interpret the construction of $B \times_{G} M$ as follows. A trivialization for $M$ is a cover of $\xdown$ where $M$ is trivial together with gluing morphisms which are elements of the group $G$ (such that the cocycle condition is met). The sheaf $B \times_{G} M$ is locally identical to $B$ wherever $M$ is trivial and the gluing data is borrowed from the gluing data of $M$ via the homomorphism of $G$ to $\text{Aut}(B)$.
  \end{remark}
\begin{lemma}\label{lemma-abelian-sheaves-adjoint}
  The functor
  \begin{align*}
    \gmodules & \to \absheaves(\thesite) \\
    B &\to B \times_{G} M
  \end{align*}
is left adjoint to the functor
\begin{align*}
   \absheaves(\thesite) & \to \gmodules \\
  E &\to E[M]
\end{align*}
\end{lemma}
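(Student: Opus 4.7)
The plan is to reduce this abelian version of the adjunction to the set-level adjunction already established in Lemma \ref{lemma-right-adjoint-to-G-sheaves}, and then verify compatibility with the abelian group structures on both sides. The bijection $f \leftrightarrow \widetilde{f}$ was defined at the level of sheaves of sets; the content of the new lemma is that it restricts to a bijection between $G$-equivariant homomorphisms of abelian sheaves on one side and homomorphisms of abelian sheaves on the other.

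First I would take an arbitrary $f \in \hom_{\gsets}(B, E[M])$ corresponding to $\widetilde{f} \in \hom_{\sheaves(\thesite)}(B \times_G M, E)$, where $\widetilde{f}(b,m) = f(b)(m)$ locally. I would then verify the equivalence: $f$ is an abelian sheaf morphism (i.e.\ $f(b_1+b_2) = f(b_1)+f(b_2)$ in $E[M]$, where addition in $E[M]$ is pointwise via evaluation into $E$) if and only if $\widetilde{f}$ is an abelian sheaf morphism with respect to the group structure on $B \times_G M$ defined just above the lemma. The forward direction is immediate: for local sections $(b_1,m),(b_2,m)$ sharing the second coordinate, additivity of $f$ gives $\widetilde{f}(b_1+b_2,m) = f(b_1+b_2)(m) = f(b_1)(m) + f(b_2)(m) = \widetilde{f}(b_1,m) + \widetilde{f}(b_2,m)$, and by the definition of addition in $B \times_G M$ (where after translating by appropriate $g_i \in \constantg(U)$ we may always assume the second coordinates agree), this suffices to make $\widetilde{f}$ additive on the presheaf $\sfrac{B \times M}{\sim}$, hence on its sheafification.

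For the reverse direction I would similarly unwind: if $\widetilde{f}$ is additive on $B \times_G M$, then for every local section $m$ of $M$, the map $b \mapsto f(b)(m) = \widetilde{f}(b,m)$ is additive in $b$; since evaluation $E[M] \to E$ at all local sections of $M$ is jointly injective (indeed $E[M]$ is precisely the internal-hom sheaf), this forces $f: B \to E[M]$ to be additive. Thus the bijection from Lemma \ref{lemma-right-adjoint-to-G-sheaves} restricts to the desired bijection, and functoriality in $B$ and $E$ is inherited from the set-level version.

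The only delicate point is really a bookkeeping check that the addition defined on $B \times_G M$ (which requires the auxiliary choice of a common $m_3$ and then $G$-translation in the $B$-slot) is the unique abelian sheaf structure for which the natural epimorphism $B \times M \to B \times_G M$ is additive in the $B$ coordinate on pairs sharing a common second coordinate. Once that observation is in place the verification is routine, and no new geometric input beyond Lemma \ref{lemma-right-adjoint-to-G-sheaves} is needed.
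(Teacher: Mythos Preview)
Your proposal is correct and follows essentially the same approach as the paper: reduce to the set-level adjunction of Lemma \ref{lemma-right-adjoint-to-G-sheaves} and then observe that the bijection restricts to additive maps on both sides. In fact you supply more detail than the paper, which simply asserts that the compatibility is clear from the earlier proof (and contains a self-referential typo in doing so).
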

\begin{proof}
  By Lemma \ref{lemma-right-adjoint-to-G-sheaves} we have a bijection
$$ \morphisms_{\gsets}(B,E[M]) \cong \morphisms_{\sheaves(\thesite)}(B \times_{G} M,E) .$$  
It is clear from the proof of Lemma \ref{lemma-abelian-sheaves-adjoint} that when $B$ and $E$ are abelian sheaves the above correspondence gives a bijection
$$ \morphisms_{\gmodules}(B,E[M]) \cong \morphisms_{\absheaves(\thesite)}(B \times_{G} M,E).$$
\end{proof}
\begin{lemma}\label{lemma-b-times-g-m-when-m-is-trivial}
  Suppose $M \cong \constantg$ is the trivial torsor. Then for any module $B$ in $\gmodules$ we have $B \times_{G} M \cong B$. 
\end{lemma}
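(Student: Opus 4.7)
The plan is to write down explicit mutually inverse morphisms between $B$ and $B \times_{G} M$. A chosen isomorphism $M \cong \constantg$ picks out a distinguished global section $e_{M}$ of $M$, coming from the identity $e \in G$. Using this, I first define $\iota \colon B \to B \times_{G} M$ as the composition of the presheaf morphism $b \mapsto (b, e_{M}|_{U})$ with the quotient-and-sheafification map $B \times M \to B \times_{G} M$. Conversely, I define $\pi \colon B \times_{G} M \to B$ as follows: on the presheaf level,
$$ (b, m) \mapsto m^{-1} \cdot b, $$
where $m^{-1}$ is the inverse of $m$ regarded as a section of $\constantg$, acting on $B$ through the given $G$-action. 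This is invariant under the torsor equivalence since $(g \cdot b,\, g \cdot m) \mapsto (gm)^{-1} \cdot (g \cdot b) = m^{-1} \cdot b$ (using that the left action of $\constantg$ on itself is by left translation), so it descends to $\sfrac{B \times M}{\sim}$ and then to its sheafification $B \times_{G} M$.

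The remaining steps are direct verifications. Composing $\pi \circ \iota$ on a local section $b$ gives $(b, e_{M}) \mapsto e_{M}^{-1} \cdot b = b$. Composing $\iota \circ \pi$ on a local section $(b, m)$ gives $m^{-1} \cdot b \mapsto (m^{-1} \cdot b,\, e_{M})$, which represents the same class as $(b, m)$ in $\sfrac{B \times M}{\sim}$ via acting by $m$, since $m \cdot (m^{-1} \cdot b) = b$ and $m \cdot e_{M} = m$. When $B$ lies in $\gmodules$, both $\iota$ and $\pi$ are additive with respect to the abelian-sheaf structure on $B \times_{G} M$ described just before Lemma \ref{lemma-abelian-sheaves-adjoint}, so the constructed isomorphism lives in $\absheaves(\thesite)$.

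I do not expect any genuine obstacle here; the only thing to watch is the bookkeeping of left-action conventions, specifically that the left action of $\constantg$ on itself (as a torsor over itself) is by left translation, which is precisely what makes $m^{-1} \cdot b$ constant along the torsor orbits. As a sanity check, the same conclusion follows abstractly from the adjunction of Lemma \ref{lemma-abelian-sheaves-adjoint} combined with Lemma \ref{lemma-when-M-is-trivial-torsor}: for any $E$, $\hom_{\absheaves(\thesite)}(B \times_{G} M, E) \cong \hom_{\gmodules}(B, \prod_{g \in G} E) \cong \hom_{\absheaves(\thesite)}(B, E)$ via projection to the $e$-component, and the claim then follows from the Yoneda lemma.
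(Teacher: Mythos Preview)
Your proof is correct. Your primary approach is more explicit than the paper's: you build concrete mutually inverse maps $\iota$ and $\pi$ and verify them by hand, whereas the paper argues purely abstractly via Yoneda's lemma together with the adjunction of Lemma~\ref{lemma-abelian-sheaves-adjoint} and the identification $E[M] \cong \prod_{g \in G} E$ from Lemma~\ref{lemma-when-M-is-trivial-torsor}. In fact, the ``sanity check'' you append at the end is precisely the paper's entire proof. Your explicit route has the advantage of actually exhibiting the isomorphism (useful downstream, and it makes additivity transparent), while the paper's route is shorter and makes functoriality in $B$ immediate without any computation.
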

\begin{proof}
  \begin{sloppypar}
When $M$ is trivial, we have ${E[M] \cong \prod_{g \in G} E}$. We see that we have a functorial bijection:
    $$ \morphisms_{\gmodules}(B, \prod_{g \in G} E ) \cong \morphisms_{\absheaves(\thesite)}(B,E) .$$
    Therefore the conclusion follows from Yoneda's lemma and the above Lemma \ref{lemma-abelian-sheaves-adjoint} (See Remark \ref{remark-on-construction-of-b-x-m-mod-G}).
  \end{sloppypar}
\end{proof}
\begin{lemma}\label{lemma-b-times-m-mod-g-functor-is-left-exact}
  The functor
  \begin{align*}
    \gmodules &\to \absheaves(\thesite) \\
    B &\to B \times_{G} M                           
  \end{align*}
  is exact. 
\end{lemma}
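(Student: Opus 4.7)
The plan is to reduce the statement to a local verification, exploiting that $M$ is by hypothesis a $\constantg\mbox{-torsor}$ and hence locally isomorphic to $\constantg$. First I recall that a sequence $0 \to B_1 \to B_2 \to B_3 \to 0$ in $\gmodules$ is exact if and only if the underlying sequence of abelian sheaves on $\thesite$ is exact; this is the same observation used in the proof of Lemma \ref{lemma-e[m]-is-exact-functor}, and it reduces our task to showing that $0 \to B_1 \times_G M \to B_2 \times_G M \to B_3 \times_G M \to 0$ is exact in $\absheaves(\thesite)$.

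Next I observe that exactness of a complex of abelian sheaves can be tested on the members of any cover of $\xdown$. Since $M$ is a $\constantg\mbox{-torsor}$, there exists a cover $\{U_i \to \xdown\}_{i \in I}$ such that $M|_{U_i} \cong \constantg|_{U_i}$ for each $i \in I$. It therefore suffices to prove exactness of the sequence after restriction to each $U_i$. The construction of $B \times_G M$ is manifestly local in $M$, so the restriction of $B \times_G M$ to $\thesite/U_i$ is canonically identified with $B|_{U_i} \times_G M|_{U_i}$.

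The key step is then to invoke Lemma \ref{lemma-b-times-g-m-when-m-is-trivial}, which gives for the trivial torsor a natural isomorphism $B \times_G \constantg \cong B$ in $\absheaves$. Naturality with respect to $G\mbox{-equivariant}$ morphisms of $B$ (which is immediate from the Yoneda argument proving that lemma, or can be read off directly from Remark \ref{remark-on-construction-of-b-x-m-mod-G}) shows that, on $U_i$, the functor $B \mapsto B \times_G M$ agrees with the forgetful functor $\gmodules(\thesite/U_i) \to \absheaves(\thesite/U_i)$. By the reduction in the first paragraph, that forgetful functor is exact, so the restricted sequence is exact on each $U_i$, and hence the original sequence is exact on $\xdown$.

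The only minor subtlety I foresee is checking that the isomorphism supplied by Lemma \ref{lemma-b-times-g-m-when-m-is-trivial} really is natural in $B$ (and not merely an abstract isomorphism for each fixed $B$). This is, however, automatic from the proof of that lemma, since the isomorphism arises from the adjunction of Lemma \ref{lemma-abelian-sheaves-adjoint} together with Yoneda, both of which are functorial in $B$.
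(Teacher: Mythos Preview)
Your proposal is correct and takes essentially the same approach as the paper: reduce to a trivializing cover of $M$ and invoke Lemma \ref{lemma-b-times-g-m-when-m-is-trivial} to identify the functor locally with the forgetful functor. You are more explicit than the paper about the naturality of the isomorphism in Lemma \ref{lemma-b-times-g-m-when-m-is-trivial}, which is indeed needed but only tacitly assumed there.
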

\begin{proof}
Suppose ${0 \to B_1 \to B_2 \to B_3 \to 0 }$ is an exact sequence in $\gmodules$. Since $M$ is locally trivial, it follows from the above Lemma \ref{lemma-b-times-g-m-when-m-is-trivial} that for $U \in \objects(\thesite)$, there exists a cover $\{U_i \to U\}_{i \in I}$ such that the sequence
$$ 0 \to B_1 \times_{G} M  \to B_2 \times_{G} M \to B_2 \times_{G} M \to 0 $$
is exact on $U_i$. This is sufficient to conclude exactness of sheaves.
\end{proof}
\begin{lemma}\label{lemma-injective[m]-is-acyclec}
  \begin{sloppypar}
    For an injective abelian sheaf $I$ on $\thesite$, the group cohomologies ${\cohomology{j}{G}{\cohomology{0}{\xdown}{I[M]}}}$ vanish for $j > 0$.
  \end{sloppypar}
\end{lemma}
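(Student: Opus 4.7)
The plan is to reduce the claim to the standard fact that injective $\integers[G]\mbox{-modules}$ have vanishing higher group cohomology, by proving that $I[M](\xdown)$ is itself injective as a $\integers[G]\mbox{-module}$. This will proceed through two formal adjunction steps plus one piece of standard homological algebra.

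First, I would argue that $I[M]$ is an injective object of the category $\gmodules$. By Lemma \ref{lemma-abelian-sheaves-adjoint}, the functor ${E \mapsto E[M]}$ is right adjoint to the functor ${B \mapsto B \times_G M}$, and by Lemma \ref{lemma-b-times-m-mod-g-functor-is-left-exact} the latter is exact. Since a right adjoint to an exact functor preserves injectives, injectivity of $I$ in $\absheaves(\thesite)$ forces injectivity of $I[M]$ in $\gmodules$.

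Second, I would show that the global sections functor $\Gamma_\xdown: \gmodules \to \integers[G]\mbox{-mod}$ preserves injectives, by exhibiting an exact left adjoint. The left adjoint I have in mind is the constant sheaf functor $c$ that sends a $G\mbox{-module}$ $N$ to the sheafification of the constant presheaf with value $N$, equipped with the induced $G\mbox{-action}$. Because $\xdown$ is a final object of $\thesite$, a morphism from a constant presheaf into any presheaf $F$ is determined by its restriction to $\xdown$; combining this with the sheafification adjunction and keeping track of the $G\mbox{-action}$ gives the adjunction $\hom_{\gmodules}(c(N), F) \cong \hom_{\integers[G]}(N, F(\xdown))$. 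The functor $c$ is the composition of the exact constant-presheaf functor and the exact sheafification functor, so it is exact, and hence $\Gamma_\xdown$ preserves injectives.

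Chaining these two steps yields that $I[M](\xdown)$ is injective as a $\integers[G]\mbox{-module}$. Since ${\cohomology{j}{G}{-} \cong \mathrm{Ext}^j_{\integers[G]}(\integers, -)}$ vanishes on injective $\integers[G]\mbox{-modules}$ for $j > 0$, the stated vanishing follows. The only subtle point is verifying the second adjunction in the $G\mbox{-equivariant}$ setting, but this is essentially formal from the finality of $\xdown$; the rest of the argument is a short chase through adjoint functor generalities and a textbook fact about $\mathrm{Ext}$ over the group ring.
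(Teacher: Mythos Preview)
Your proposal is correct and follows essentially the same route as the paper: both arguments show that $\cohomology{0}{\xdown}{I[M]}$ is an injective $\integers[G]$-module by exhibiting exact left adjoints, using Lemma~\ref{lemma-abelian-sheaves-adjoint}, Lemma~\ref{lemma-b-times-m-mod-g-functor-is-left-exact}, and the constant-sheaf/global-sections adjunction. The only cosmetic difference is that the paper composes the two right adjoints into a single functor $E \mapsto \cohomology{0}{\xdown}{E[M]}$ and exhibits its composite left adjoint $L \mapsto \underline{L} \times_G M$ as exact in one stroke, whereas you apply the ``exact left adjoint $\Rightarrow$ right adjoint preserves injectives'' principle twice in succession.
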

\begin{proof}
It suffices to prove that $\cohomology{0}{\xdown}{I[M]}$ is an injective module over the group ring $\integers[G]$. We know that the functor
\begin{align*}
  H^0(\xdown, - ) : \gmodules &\to \textit{Mod}(\integers[G])
\end{align*}
admits a left adjoint given by ${L \to \underline{L}}$. Moreover, this left adjoint is exact. Therefore it follows from the above Lemma \ref{lemma-abelian-sheaves-adjoint} that the composite functor
  \[
    \begin{tikzcd}[row sep=tiny]
      \absheaves(\thesite) \arrow{r} & \gmodules  \arrow{r} & \textit{Mod}(\integers[G]) \\
      E \arrow{r} &  E[M] \arrow{r} & H^0(\xdown,-) 
    \end{tikzcd}
  \]
has a left adjoint given by the composite functor 
\[
  \begin{tikzcd}[row sep=tiny]
    \textit{Mod}(\integers[G]) \arrow{r} & \gmodules  \arrow{r} & \absheaves(\thesite) \\
    L \arrow{r} &  \underline{L} \arrow{r} & \underline{L} \times_{G} M.
  \end{tikzcd}
\]
\begin{sloppypar}
  Moreover, by Lemma \ref{lemma-b-times-m-mod-g-functor-is-left-exact} we  conclude that the composite left adjoint is exact. It follows from formal adjointness property that if a functor has an exact left adjoint, then it transforms injective objects to injective objects  (See \cite[\href{https://stacks.math.columbia.edu/tag/015Z}{Tag 015Z}]{stacks-project}). Therefore ${\cohomology{0}{\xdown}{I[M]}}$ is an injective $\integers[G]$ module.
\end{sloppypar}
\end{proof}
\begin{theorem}\label{theorem-existence-of-spectral-sequence}
  \begin{sloppypar}
    Let $E$ be an abelian sheaf on $\thesite$. Then there is a spectral sequence that starts with ${E_2^{p,q} = \cohomology{q}{G}{\cohomology{p}{\xdown}{E[M]}}}$ and converges to $\cohomology{p+q}{\xdown}{E}$. 
  \end{sloppypar}
\end{theorem}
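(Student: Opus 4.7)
The plan is to obtain this spectral sequence as an instance of Grothendieck's spectral sequence for the composition of right derived functors. I will consider the left exact functor $F : \absheaves(\thesite) \to \textit{Mod}(\integers[G])$ sending $E$ to $\cohomology{0}{\xdown}{E[M]}$, viewed as the composition of $(-)[M] : \absheaves(\thesite) \to \gmodules$ (exact, by Lemma \ref{lemma-e[m]-is-exact-functor}) with the global sections functor from $\gmodules$ to $\textit{Mod}(\integers[G])$. By Lemma \ref{lemma-invariants-of-E[M]}, the further composition $(-)^G \circ F$ is naturally isomorphic to $\cohomology{0}{\xdown}{-}$, since $\Gamma(\xdown, E[M])^G = \Gamma(\xdown, E[M]^G) = \Gamma(\xdown, E)$. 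Grothendieck's theorem will then produce the desired spectral sequence
\[ E_2^{p,q} = \cohomology{q}{G}{R^p F(E)} \implies \cohomology{p+q}{\xdown}{E}, \]
provided that (i) $F$ sends injective abelian sheaves to $(-)^G$-acyclic $\integers[G]$-modules, and (ii) $R^p F(E)$ is naturally isomorphic to $\cohomology{p}{\xdown}{E[M]}$ together with its $G$-action.

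Hypothesis (i) is handled directly by Lemma \ref{lemma-injective[m]-is-acyclec}: for an injective $I$ in $\absheaves(\thesite)$, the object $F(I) = \cohomology{0}{\xdown}{I[M]}$ is in fact an injective $\integers[G]$-module, hence certainly $(-)^G$-acyclic. Combined with the standard identification $R^q((-)^G) = \cohomology{q}{G}{-}$, this handles the outer group cohomology factor of the spectral sequence.

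The main obstacle I expect is hypothesis (ii). To address it, I will strengthen Lemma \ref{lemma-e[m]-is-exact-functor} by showing that $(-)[M]$ also preserves injective abelian sheaves. Given this, an injective resolution $E \to J^\bullet$ in $\absheaves(\thesite)$ yields an injective (in particular $\Gamma$-acyclic) resolution $E[M] \to J^\bullet[M]$, and computing gives $R^p F(E) = H^p(\Gamma(\xdown, J^\bullet[M])) = \cohomology{p}{\xdown}{E[M]}$ with the required $G$-module structure coming from the $G$-action on $E[M]$. To prove the preservation of injectives, I will identify $E[M]$ with the internal hom $\sheafhom(\integers[M], E)$, where $\integers[M]$ denotes the free abelian sheaf on the sheaf of sets $M$; the Hom-tensor adjunction then identifies the left adjoint of $(-)[M]$ as $- \otimes \integers[M]$. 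Since $M$ is locally isomorphic to $\constantg$, the sheaf $\integers[M]$ is locally isomorphic to the free (hence flat) abelian sheaf $\bigoplus_{g \in G} \integers$, so $- \otimes \integers[M]$ is locally, and therefore globally, exact. Thus $(-)[M]$ preserves injectives, completing the argument.
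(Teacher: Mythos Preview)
Your proposal is correct and follows essentially the same route as the paper: apply the Grothendieck spectral sequence to the factorization $\cohomology{0}{\xdown}{-} \cong (-)^G \circ F$ with $F(E) = \cohomology{0}{\xdown}{E[M]}$, using Lemma \ref{lemma-invariants-of-E[M]} for the factorization, Lemma \ref{lemma-injective[m]-is-acyclec} for the acyclicity hypothesis, and exactness of $(-)[M]$ to identify $R^pF$. You are in fact slightly more careful than the paper on the last step: the paper simply says ``since $E \to E[M]$ is exact, we can identify $R^pF(E)$ with $\cohomology{p}{\xdown}{E[M]}$,'' whereas you correctly observe that exactness alone is not quite enough and supply the missing ingredient (that $(-)[M]$ takes injectives to $\Gamma$-acyclics) via the adjunction with $-\otimes \integers[M]$; the paper's own adjunction (Lemma \ref{lemma-abelian-sheaves-adjoint} with the exact left adjoint of Lemma \ref{lemma-b-times-m-mod-g-functor-is-left-exact}) would serve equally well here.
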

\begin{proof}
  \begin{sloppypar}
    By Lemma \ref{lemma-invariants-of-E[M]} we see that the functor of global sections ${\absheaves(\thesite) \to \absheaves}$ defined by ${E \to \cohomology{0}{\xdown}{E}}$, is isomorphic to the composite functor:
    \[
      \begin{tikzcd}[row sep = tiny]
        \absheaves(\thesite) \arrow{r} & \textit{Mod}(\integers[G]) \arrow{r} & \absheaves \\
        E \arrow{r} & \cohomology{0}{\xdown}{E[M]} \arrow{r} & (\cohomology{0}{\xdown}{E[M]})^{G}. 
      \end{tikzcd}
    \]
    Moreover, for an injective abelian sheaf $I$, the module $\cohomology{0}{\xdown}{I[M]}$ has no higher group cohomologies. Therefore we have the Grothendieck spectral sequence that computes the sheaf cohomologies of $E$. Since $E \to E[M]$ is an exact functor (Lemma \ref{lemma-e[m]-is-exact-functor}), we can identify the right derived functors of ${E \to \cohomology{0}{\xdown}{E[M]}}$ with ${\cohomology{p}{\xdown}{E[M]}}$. Therefore the spectral sequence starts with ${E_2^{p,q} = \cohomology{q}{G}{\cohomology{p}{\xdown}{E[M]}}}$ and converges to $\cohomology{p+q}{\xdown}{E}$.
  \end{sloppypar}
\end{proof}
\subsection*{Hochschild-Serre spectral sequence}
\begin{sloppypar}
  Suppose $\xdown$ is a scheme and $\etalesite$ is the small \etale{} site. Then any $G\mbox{-torsor}$ $M$ on $\etalesite$ is representable (See \cite[\href{https://stacks.math.columbia.edu/tag/09Y8}{Tag 09Y8}]{stacks-project}). That is, there is a finite \etale{} morphism $\xup \xrightarrow{\pi} \xdown$ together with an isomorphism $G \cong \aut(\xup / \xdown)$ such that ${M \cong \morphisms_{\etalesite}(-,\xup)}$. In this situation, we have the Hochschild-Serre spectral sequence with ${E_2^{p,q} = \cohomology{q}{G}{\cohomology{p}{\xup}{\pi^{-1}E}}}$ that converges to $\cohomology{p+q}{\xdown}{E}$. We can realize this as another instance of the Grothendieck spectral sequence given by writing the global sections functor as a composition of two functors:
\end{sloppypar}
\[
\begin{tikzcd}[row sep = tiny]
  \absheaves(\thesite) \arrow{r} & \textit{Mod}(\integers[G]) \arrow{r} & \absheaves \\
  E \arrow{r} & \cohomology{0}{\xup}{\pi^{-1}E} \arrow{r} & (\cohomology{0}{\xup}{\pi^{-1}E})^{G}. 
\end{tikzcd}
\]
When the torsor $M$ is representable by the object $\xup \xrightarrow{\pi} \xdown$, the internal sheaf hom $E[M]$ is isomorphic to $ \pi_*(E|_{\xup}) $ (See \cite[\href{https://stacks.math.columbia.edu/tag/0D7X}{Tag 0D7X}]{stacks-project}). Thus we have
$$ \cohomology{0}{\xdown}{E[M]} \cong \cohomology{0}{\xdown}{\pi_*(E|_{\xup})} \cong \cohomology{0}{\xup}{\pi^{-1}E} .$$
Therefore in this situation, the spectral sequence in Theorem \ref{theorem-existence-of-spectral-sequence} is the same as the Hochschild-Serre spectral sequence.
\subsection*{Long exact sequence in lower degrees} 
The second page of the spectral sequence in Theorem \ref{theorem-existence-of-spectral-sequence} is as below:
\[
  \begin{tikzcd}[cramped,sep=small]
    \cohomology{2}{\xdown}{E[M]}^{G}  & \phantom{} & \phantom{} & \phantom{} \\
\cohomology{1}{\xdown}{E[M]}^{G} \arrow{rrd} & \cohomology{1}{G}{\cohomology{1}{\xdown}{E[M]}} \arrow{rrd} & \cohomology{2}{G}{\cohomology{1}{\xdown}{E[M]}} & \phantom{} \\
    \cohomology{0}{G}{E[M]}^{G} & \cohomology{1}{G}{\cohomology{0}{G}{E[M]}} &  \cohomology{2}{G}{\cohomology{0}{G}{E[M]}} & \cohomology{3}{G}{\cohomology{0}{G}{E[M]}} \\
  \end{tikzcd}
\]
Our goal is to describe a long exact sequence arising from this spectral sequence in terms of torsors and gerbes. We write the sheaf $E[M]$ as $A$. We identify $E$ as the sheaf of invariants $A^G$ and we denote by $i: \invariants \to A$ the natural inclusion map. With this notation we have the following long exact sequence:
\begin{figure}[H]
  \centering
  \[
    \begin{tikzcd}
      0 \arrow{r} & \cohomology{1}{G}{A(\xdown)} \ar[draw=none]{d}[name=Y, anchor=center]{} \arrow{r}{\theta_1} & \cohomology{1}{\xdown}{A^G} \ar[rounded corners,
      to path={ -- ([xshift=2ex]\tikztostart.east)
        |- (Y.center) \tikztonodes
        -| ([xshift=-2ex]\tikztotarget.west)
        -- (\tikztotarget)}]{dll}[swap, at end]{\theta_2} \\ 
      \cohomology{1}{\xdown}{A}^G \arrow{r}{\theta_3} & \cohomology{2}{G}{A(\xdown)} \arrow{r}{\theta_4} \ar[draw=none]{d}[name=X, anchor=center]{} & \left(\text{ker:}\cohomology{2}{\xdown}{A^G} \to  \cohomology{2}{\xdown}{A}^G \right) \ar[rounded corners,
      to path={ -- ([xshift=2ex]\tikztostart.east)
        |- (X.center) \tikztonodes
        -| ([xshift=-2ex]\tikztotarget.west)
        -- (\tikztotarget)}]{dll}[swap, at end]{\theta_5} \\
      \cohomology{1}{G}{\cohomology{1}{\xdown}{A}} \arrow{r}{\theta_6} & \cohomology{3}{G}{A(\xdown)}
    \end{tikzcd}
  \]
\stepcounter{theorem}
\caption{Long exact sequence of cohomologies}
\label{figure-long-exact-sequence}
\end{figure}
\begin{sloppypar}
  We use the following dictionary to concretely identify the various terms which appear above (See Theorem \ref{theorem-cohomologies-and-torsors-gerbes}). 
  \begin{itemize}
  \item $\cohomology{1}{\xdown}{\invariants} \cong \left\{ \text{ }\invariants\mbox{-torsors} \text{ up to isomorphism }\right\}$
  \item $\cohomology{2}{\xdown}{\invariants} \cong \left\{ \text{ Gerbes banded by }\invariants \text{ up to isomorphism }\right\}$
  \item $\cohomology{1}{\xdown}{A} \cong \left\{ \text{ }A\mbox{-torsors} \text{ up to isomorphism }\right\}$
  \item $\cohomology{2}{\xdown}{A} \cong \left\{ \text{ Gerbes banded by } A \text{ up to isomorphism }\right\}$
  \end{itemize}
  The action of the group $G$ on ${{\cohomology{1}{\xdown}{A}}}$ and ${{\cohomology{2}{\xdown}{A}}}$ in terms of torsors and gerbes is as given by Definition \ref{def-G-action-on-torsors-and-gerbes}. The maps 
  ${\cohomology{1}{\xdown}{\invariants} \to \cohomology{1}{\xdown}{A}}$ and $\cohomology{2}{\xdown}{\invariants} \to \cohomology{2}{\xdown}{A}$ are given by inducing a torsor or a gerbe from the invariants (See Definition \ref{def-induced-torsor-and-gerbe}). It follows from Proposition \ref{prop-induced-torsor-and-gerbes} that a torsor or a gerbe induced from the invariants admits a lift of the group action by $G$ and therefore represents a cohomology class in ${{\cohomology{1}{\xdown}{A}}^G}$ or ${{\cohomology{2}{\xdown}{A}}^G}$ (See Remark \ref{remark-tg-operators-define-action-on-sheaf-cohomology}). We note that higher group cohomology locally vanishes for all indices for the $G\mbox{-action}$ on $A$ (Corollary \ref{corollary-local-group-cohomology-vanishes}).

We denote the trivial (See Definition \ref{definition-a-torsor}) $A\mbox{-torsor}$ as $\trivialtorsor$. We denote by $\trivialgerbe$ the category $\torsorcat{A}$ which is a trivial gerbe banded by $A$. Now we begin to define the six maps that appear in the Figure \ref{figure-long-exact-sequence}.

  Suppose $\{a_g\}_{g \in G}$ is $1\mbox{-cocycle}$ with values in $A(\xdown)$ representing a cohomology class in $\cohomology{1}{G}{A(\xdown)}$. Then we have a lift of the group action on $\trivialtorsor$ given by the morphisms $\{\lambda_g\}_{g \in G}$ as defined in Lemma \ref{lemma-1-cocycle-for-action-on-trivial-torsor}. Then the associated cochain $\chi_{\lambda}$ is identically zero. Since the first group cohomology classes locally vanish, $\trivialtorsor_{\lambda}$ is an $\invariants\mbox{-torsor}$. Moreover, we can deduce from Lemma \ref{lemma-1-cocycle-for-action-on-trivial-torsor} and Remark \ref{remark-functoriality-of-y-alpha-gerbe-f-constructions} that the $\invariants\mbox{-equivariant}$ isomorphism class of $\trivialtorsor_{\lambda}$ only depends on the cohomology class of the cocycle $\{a_g\}_{g \in G}$. It is easy to verify (also follows from Lemma \ref{lemma-local-cohomology-obstruction-for-local-sections-of-y-lambda}) that the torsor $\trivialtorsor_{\lambda}$ admits a global section if and only if the group cohomology class of $\{a_g\}_{g \in G}$ is zero. Therefore this construction defines an injective map $\cohomology{1}{G}{A(\xdown)} \xrightarrow{\theta_1} \cohomology{1}{\xdown}{\invariants}$ that begins the long exact sequence in the Figure \ref{figure-long-exact-sequence}.

  As we remarked earlier, the map ${\cohomology{1}{\xdown}{A^G} \xrightarrow{\theta_2} \cohomology{1}{\xdown}{A}^G}$ is given by inducing an $\invariants\mbox{-torsor}$ to $A$. It follows from Theorem \ref{theorem-induced-from-invariants-when-cohomology-class-is-zero} and Proposition \ref{prop-induced-torsor-and-gerbes} that the kernel of $\theta_2$ is precisely the image of $\theta_1$.

  Suppose $y$ is an $A\mbox{-torsor}$ which is stable under the action of $G$. In other words, we have a lift of the $G\mbox{-action}$ on $y$. The associated obstruction cohomology class in $\cohomology{2}{G}{A(\xdown)}$ is independent of the choice of such a lift. Thus we have a map ${\cohomology{1}{\xdown}{A}^{G}\xrightarrow{\theta_3}\cohomology{2}{G}{A(\xdown)}}$. It follows from Theorem \ref{theorem-induced-from-invariants-when-cohomology-class-is-zero} and Proposition \ref{prop-induced-torsor-and-gerbes} that the kernel of this map consists of precisely those $A\mbox{-torsors}$ which are induced from the invariants. 

  The next three maps in the long exact sequence involve the second cohomology and therefore gerbes come into play. The trivial gerbe $\trivialgerbe$ admits a lift of the $G\mbox{-action}$ given by the morphisms $\{T_g\}_{g \in G}$ as in Lemma \ref{lemma-existence-of-tg-with-no-obstruction}. For $g,h \in G$, we have equality of morphisms ${T_{gh} = T_g \circ T_h}$. Therefore the data of connecting $2\mbox{-morphisms}$ for this action corresponds to an automorphism ${\alpha_{g,h}: T_{gh} \to T_{gh}}$ for every $g,h \in G$. We regard this data as a $2\mbox{-cochain}$ with values in $A(\xdown)$. Suppose we are given such a $2\mbox{-cochain}$ which is a cocycle representing a cohomology class in $\cohomology{2}{G}{A(\xdown)}$. 
\begin{lemma}\label{lemma-3-cochain-for-T-g-functors}
The associated $3\mbox{-cocycle}$ $\kappa_{(T,\alpha)}$ for the above group action is zero.
\end{lemma}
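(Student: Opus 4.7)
The plan is to compute the 3-cochain $\kappa_{(T,\alpha)}$ explicitly and recognize it as the group-cohomology coboundary of the 2-cochain $\alpha$. The key simplification comes from Lemma \ref{lemma-existence-of-tg-with-no-obstruction}, which provides the strict equality $T_g \circ T_h = T_{gh}$ of functors (not merely a $2$-isomorphism). Consequently each connecting $2$-morphism $\alpha_{g,h} \colon T_{gh} \to T_g \circ T_h = T_{gh}$ is an automorphism of $T_{gh}$ in $\hom_{\xdown}(\trivialgerbe,\trivialgerbe)$, so by Lemma \ref{lemma-automorpism-of-functor-is-global-sections} it corresponds to a global section $\alpha(g,h) \in A(\xdown)$, and the hypothesis that this data represents a class in $\cohomology{2}{G}{A(\xdown)}$ is exactly the statement that $\{\alpha(g,h)\}$ is a group $2$-cocycle.

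Next, I would unwind Definition \ref{def-3-cochain-def-with-group-action-and-choice} in this setting. Both parallel composite $2$-morphisms in the defining diagram go from $T_{g_1 g_2 g_3}$ to itself, so they are automorphisms of $T_{g_1 g_2 g_3}$ and hence elements of $A(\xdown)$. To compute the horizontal compositions $id_{T_{g_1}} \star \alpha_{g_2,g_3}$ and $\alpha_{g_1,g_2} \star id_{T_{g_3}}$, I apply Lemma \ref{lemma-G-action-on-2-morphisms-same-as-on-global-sections}, recalling from Lemma \ref{lemma-existence-of-tg-with-no-obstruction} that the map induced by $T_g$ on the band $A$ is the action of $g$. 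This yields
$$\overline{id_{T_{g_1}} \star \alpha_{g_2,g_3}} = g_1 \cdot \alpha(g_2,g_3), \qquad \overline{\alpha_{g_1,g_2} \star id_{T_{g_3}}} = \alpha(g_1,g_2).$$

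Composing along the two paths of the defining diagram then gives
$$\text{(top)} = \alpha(g_1, g_2 g_3) + g_1 \cdot \alpha(g_2, g_3), \qquad \text{(bottom)} = \alpha(g_1 g_2, g_3) + \alpha(g_1, g_2),$$
and since the right-hand vertical arrow is the identity, the diagram forces
$$\kappa_{(T,\alpha)}(g_1,g_2,g_3) = \alpha(g_1, g_2 g_3) + g_1 \cdot \alpha(g_2, g_3) - \alpha(g_1 g_2, g_3) - \alpha(g_1, g_2).$$
This is precisely the coboundary $(d\alpha)(g_1,g_2,g_3)$ in the standard bar resolution computing $\cohomology{*}{G}{A(\xdown)}$. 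Since $\alpha$ is assumed to be a $2$-cocycle, $d\alpha \equiv 0$, and therefore $\kappa_{(T,\alpha)}$ vanishes identically.

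There is no real obstacle: the argument is a direct calculation, and the only subtlety is the careful bookkeeping of horizontal compositions via Lemma \ref{lemma-G-action-on-2-morphisms-same-as-on-global-sections}, together with the observation that the strict equality $T_g \circ T_h = T_{gh}$ is what makes $\alpha_{g,h}$ itself a genuine $A(\xdown)$-valued $2$-cochain whose coboundary can be read off directly.
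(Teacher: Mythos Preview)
Your proof is correct and reaches the same conclusion as the paper: $\kappa_{(T,\alpha)} = d\alpha$, which vanishes since $\alpha$ is a cocycle. The packaging differs slightly. The paper observes that the trivial torsor $\trivialtorsor$ is a global section of $\trivialgerbe$ with explicit isomorphisms $\phi_g \colon \trivialtorsor \to T_g(\trivialtorsor)$ given by $a \mapsto g^{-1}\cdot a$; this places us in Situation~\ref{situation-for-local-objects-for-gerbe-F-alpha} with $U = \xdown$, and then Lemma~\ref{lemma-locally-kappa-f-alpha-is-coboundary} immediately yields $\kappa_{(T,\alpha)} = d\alpha$ without further computation. You instead unwind Definition~\ref{def-3-cochain-def-with-group-action-and-choice} directly, using Lemma~\ref{lemma-G-action-on-2-morphisms-same-as-on-global-sections} to evaluate the horizontal compositions. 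Your route is more self-contained and makes the identity $\kappa_{(T,\alpha)} = d\alpha$ completely explicit; the paper's route is shorter because it cashes in the general machinery already built for exactly this purpose.
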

\begin{proof}
We have the global section $\trivialtorsor$ of $\trivialgerbe$ for which the map ${g^{-1}: \trivialtorsor \xrightarrow{a \to g^{-1} \cdot a} \trivialtorsor}$ gives an isomorphism of torsors $\trivialtorsor \to T_g(\trivialtorsor)$. Thus we have a scenario as in Situation \ref{situation-for-local-objects-for-gerbe-F-alpha}, where we can apply Lemma \ref{lemma-locally-kappa-f-alpha-is-coboundary} to see that the $3\mbox{-cochain}$ $\kappa_{(T,\alpha)}$ is the coboundary of the $2\mbox{-cochain}$ $\{\alpha_{g,h}\}_{g,h \in G}$. Since $\{\alpha_{g,h}\}_{g,h \in G}$ is a cocycle, this proves the lemma. 
\end{proof}
Thus we conclude that $\trivialgerbe_{(T,\alpha)}$ is a gerbe banded by $\invariants$ which when induced to $A$ gives the trivial gerbe. Suppose $\{\alpha'_{g,h}\}_{g,h \in G}$ is another cochain which is cohomologous to $\{\alpha_{g,h}\}_{g,h \in G}$. Then for every $g \in G$, there is a $2\mbox{-automorphism}$ ${\gamma_g: T_g \to T_g}$ given by a scalar $\gamma_g \in A(\xdown)$ such that Corollary \ref{corollary-change-connecting-morphisms-gives-same-gerbe-f-alpha} yields an isomorphism of gerbes ${\gerbe_{(T,\alpha)} \cong \gerbe_{(T,\alpha')}}$. Thus we see that the assignment ${\{\alpha_{g,h}\}_{g,h \in G} \to \gerbe_{(T,\alpha)}}$ gives a well-defined map:
$$ \cohomology{2}{G}{A(\xdown)} \xrightarrow{\theta_4} \left(\text{ker:}\cohomology{2}{\xdown}{A^G} \to  \cohomology{2}{\xdown}{A}^G \right) .$$

  By unwinding the definitions, we see that a global section of $\gerbe_{(T,\alpha)}$ is a tuple $(y,\{\phi_g\}_{g \in G})$ where $y$ is an $A\mbox{-torsor}$ and $\{\phi_g: y \to y\}_{g \in G}$  are automorphisms of $y$ which lift the $G\mbox{-action}$ such that the associated $2\mbox{-cochain}$ is $\{\alpha_{g,h}\}_{g,h \in G}$. Thus we see that the kernel of the map $\theta_4$ is precisely the image of $\theta_3$.

Suppose $\overline{\gerbe}$ is a gerbe banded by $\invariants$ such that we have an $i\mbox{-equivariant}$ morphism ${F: \overline{\gerbe} \to \trivialgerbe}$. By Proposition \ref{prop-induced-torsor-and-gerbes}, we have automorphisms ${\{F_g\}_{g \in G}}$ of $\trivialgerbe$ that lift the $G\mbox{-action}$ such that the $F$ and $F_g \circ F$ are $2\mbox{-isomorphic}$. By Lemma \ref{lemma-g-action-trivial-gerbe-is-1-cocycle}, we have a $1\mbox{-cocycle}$ $\{y_g\}_{g \in G}$ with values in $\cohomology{1}{\xdown}{A}$ such that for $g \in G$, the automorphism $F_g$ is $2\mbox{-isomorphic}$ to ${T_g \circ \isom(y_g,-)}$.
\begin{lemma}\label{lemma-theta5-well-defined}
  \begin{sloppypar}
    The cohomology class of $\{y_g\}_{g \in G}$ in ${\cohomology{1}{G}{\cohomology{1}{\xdown}{A}}}$ only depends on the $\invariants\mbox{-equivariant}$ isomorphism class of the gerbe $\overline{\gerbe}$.
  \end{sloppypar}
\end{lemma}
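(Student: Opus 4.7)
The plan is to verify the claim by showing both that (i) the cocycle class $\{y_g\}$ does not depend on the auxiliary morphism $F$ used in its construction, and (ii) it is preserved under $\invariants$-equivariant isomorphisms of $\overline{\gerbe}$.

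For (i), I would first record that, for a fixed $F$, the cocycle class is already unambiguous: Proposition \ref{prop-induced-torsor-and-gerbes} supplies $\{F_g\}_{g\in G}$ uniquely up to $2$-isomorphism, and Lemma \ref{lemma-g-action-trivial-gerbe-is-1-cocycle} then determines each $y_g$ up to $A$-equivariant isomorphism. Now suppose $F, F': \overline{\gerbe} \to \trivialgerbe$ are two $i$-equivariant morphisms. Proposition \ref{prop-hom-bands-gerbe-construction} gives an $A$-equivariant automorphism $H$ of $\trivialgerbe$, unique up to $2$-isomorphism, with $H \circ F \sim F'$. The crux is to prove that $H$ is $G$-equivariant with respect to the two lifted actions $\{F_g\}$ and $\{F'_g\}$, i.e.\ that $F'_g \circ H \sim H \circ F_g$ for every $g$. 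I would establish this by considering $\widetilde{F}_g := H \circ F_g \circ H^{-1}$: this is an automorphism of $\trivialgerbe$ equivariant with respect to the action of $g$ on $A$ (since $H$ is $A$-equivariant), and it satisfies $\widetilde{F}_g \circ F' \sim H \circ F_g \circ F \sim H \circ F \sim F'$, which is precisely the characterizing property of $F'_g$ in Proposition \ref{prop-induced-torsor-and-gerbes}. Uniqueness of $F'_g$ then forces $\widetilde{F}_g \sim F'_g$, giving the intertwining relation. Part (3) of Lemma \ref{lemma-g-action-trivial-gerbe-is-1-cocycle} converts this into the statement that $\{y_g\}$ and $\{y'_g\}$ represent the same class in $\cohomology{1}{G}{\cohomology{1}{\xdown}{A}}$.

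For (ii), let $J: \overline{\gerbe}_1 \to \overline{\gerbe}_2$ be an $\invariants$-equivariant isomorphism of gerbes and let $F_2: \overline{\gerbe}_2 \to \trivialgerbe$ be $i$-equivariant. Then $F_1 := F_2 \circ J$ is an $i$-equivariant morphism from $\overline{\gerbe}_1$ to $\trivialgerbe$, and an automorphism $F_g$ of $\trivialgerbe$ satisfies $F_g \circ F_1 \sim F_1$ if and only if $F_g \circ F_2 \sim F_2$, since $J$ is invertible up to $2$-isomorphism. Thus the liftings computed from $F_1$ and $F_2$ coincide up to $2$-isomorphism and so produce identical cocycles $\{y_g\}$. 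Combined with (i), this proves that the cohomology class in $\cohomology{1}{G}{\cohomology{1}{\xdown}{A}}$ depends only on the $\invariants$-equivariant isomorphism class of $\overline{\gerbe}$.

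The main obstacle is the $2$-categorical bookkeeping in step (i): the equivalence $\widetilde{F}_g \circ F' \sim F'$ is a chain of three independent $2$-isomorphisms that must be composed horizontally and vertically in a compatible way, but no new idea is required beyond carefully invoking the uniqueness clause of Proposition \ref{prop-induced-torsor-and-gerbes} and the commutation relation of Lemma \ref{lemma-Tg-commutation-relation} where explicit representatives $F_g \sim T_g \circ \isom(y_g,-)$ are needed.
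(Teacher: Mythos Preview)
Your proposal is correct and follows essentially the same approach as the paper: reduce to independence of the choice of $F$, produce the $A$-equivariant $H$ with $H\circ F\sim F'$ via Proposition~\ref{prop-hom-bands-gerbe-construction}, verify that $H$ intertwines the two lifted actions, and conclude via Lemma~\ref{lemma-g-action-trivial-gerbe-is-1-cocycle}. The paper leaves the $G$-equivariance of $H$ as ``straightforward to verify'' and the reduction step (your part (ii)) as an implicit ``it suffices,'' whereas you spell both out explicitly; your conjugation argument $\widetilde{F}_g=H\circ F_g\circ H^{-1}$ together with the uniqueness clause of Proposition~\ref{prop-induced-torsor-and-gerbes} is exactly the right way to fill that gap.
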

\begin{proof}
  \begin{sloppypar}
    It suffices to prove that the cohomology class of $\{y_g\}_{g \in G}$ is independent of the choice of the ${i\mbox{-equivariant}}$ morphism $F: \overline{\gerbe} \to \trivialgerbe$. Suppose $F': \overline{\gerbe} \to \trivialgerbe $ is another $i\mbox{-equivariant}$ morphism. Let $\{F'_g\}_{g \in G}$ be the automorphisms of $\trivialgerbe$ such that for $g \in G$, the morphisms $F'$ and ${F'_g \circ F'}$ are ${2\mbox{-isomorphic}}$. By Proposition \ref{prop-hom-bands-gerbe-construction} there is a unique (up to $2\mbox{-isomorphism}$) $A\mbox{-equivariant}$ isomorphism ${H: \trivialgerbe \to \trivialgerbe}$ such that the following diagram ${2\mbox{-commutes}}$:
  \end{sloppypar}
  \[
  \begin{tikzcd}
    \overline{\gerbe} \arrow{r}{F} \arrow{d}{\rotatebox{90}{=}} & \trivialgerbe \arrow{d}{H} \\
    \overline{\gerbe} \arrow{r}{F'} & \trivialgerbe
  \end{tikzcd}
\]
\begin{sloppypar}
  It is straightforward to verify that $H$ defines a $G\mbox{-equivariant}$ morphism with respect to the actions given by the morphisms $\{F_g\}_{g \in G}$ and $\{F'_g\}_{g \in G}$. By Lemma \ref{lemma-g-action-trivial-gerbe-is-1-cocycle}, we see that the cohomology class in ${\cohomology{1}{G}{\cohomology{1}{\xdown}{A}}}$ associated to the two actions is the same.
\end{sloppypar}
\end{proof}
Thus we have a well-defined map:
$$\left(\text{ker:}\cohomology{2}{\xdown}{A^G} \to  \cohomology{2}{\xdown}{A}^G \right) \xrightarrow{\theta_5} \cohomology{1}{G}{\cohomology{1}{\xdown}{A}}.$$
A gerbe $\overline{\gerbe}$ lies in the kernel of the map $\theta_5$ if and only if there is an $i\mbox{-equivariant}$ morphism $\overline{\gerbe} \xrightarrow{F} \trivialgerbe$ such that for $g \in G$, the morphisms $F$ and $T_g\circ F$ are $2\mbox{-isomorphic}$. By Theorem \ref{theorem-induced-from-invariants-when-cohomology-class-is-zero}, we see that this holds if and only if $\overline{\gerbe}$ is in the image of $\theta_4$. Thus we see that the map $\theta_5$ fits in the long exact sequence in Figure \ref{figure-long-exact-sequence}. 

Suppose we have a $1\mbox{-cocycle}$ $\{y_g\}_{g \in G}$ with values in $\cohomology{1}{\xdown}{A}$. By Lemma \ref{lemma-g-action-trivial-gerbe-is-1-cocycle}, we have morphisms ${\{F_g = T_g \circ \isom(y_g,-)\}_{g \in G}}$ that define a lift of the action of $G$ on $\trivialgerbe$. Then we have the associated obstruction cohomology class in ${\cohomology{3}{G}{A(\xdown)}}$. By Lemma \ref{lemma-g-action-trivial-gerbe-is-1-cocycle} and functoriality of the obstruction class (Proposition \ref{prop-g-equivariant-map-and-3-cochain}) we see that the obstruction class only depends on the cohomology class of ${\{y_g\}_{g \in G}}$ in ${\cohomology{1}{G}{\cohomology{1}{\xdown}{A}}}$. Thus we have a well-defined map ${\cohomology{1}{G}{\cohomology{1}{\xdown}{A}} \xrightarrow{\theta_6} \cohomology{3}{G}{A(\xdown)}}$. We conclude from Proposition \ref{prop-induced-torsor-and-gerbes} that the image of $\theta_5$ lies in the kernel of the map $\theta_6$. Conversely, if the associated cohomology class in $\cohomology{3}{G}{A(\xdown)}$ is zero then by Theorem \ref{theorem-induced-from-invariants-when-cohomology-class-is-zero}, the corresponding element in $\cohomology{1}{G}{\cohomology{1}{\xdown}{A}}$ lies in the image of $\theta_5$.
\end{sloppypar}

\begin{proposition}\label{prop-group-hom-are-killed-by-theta-6}
The composite morphism 
$$ \hom(G,\cohomology{1}{\xdown}{\invariants}) \to \cohomology{1}{G}{\cohomology{1}{\xdown}{A}} \xrightarrow{\theta_6} \cohomology{3}{G}{A(\xdown)} $$
is zero. 
\end{proposition}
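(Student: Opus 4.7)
The plan is to recognize that the composite map is exactly the construction analyzed in Proposition \ref{prop_tg_induced_yg_has_no_obstruction}, so the conclusion reduces to unwinding definitions. Let $\Phi: G \to \cohomology{1}{\xdown}{\invariants}$ be a group homomorphism. For each $g \in G$, choose an $\invariants$-torsor $\overline{y}_g$ representing the class $\Phi(g)$, and let $y_g$ denote the $A$-torsor obtained by inducing $\overline{y}_g$ from $\invariants$ to $A$.

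First I would verify that the composite sends $\Phi$ to the class $[\{y_g\}_{g \in G}]$ in $\cohomology{1}{G}{\cohomology{1}{\xdown}{A}}$. The map $\hom(G, \cohomology{1}{\xdown}{\invariants}) \to \cohomology{1}{G}{\cohomology{1}{\xdown}{A}}$ is given by composing $\Phi$ with the induction map $\cohomology{1}{\xdown}{\invariants} \to \cohomology{1}{\xdown}{A}$ and then regarding a homomorphism into a trivial $G$-module as a 1-cocycle. By Proposition \ref{prop-induced-torsor-and-gerbes}, each $y_g$ represents a $G$-invariant class in $\cohomology{1}{\xdown}{A}$, and the cocycle condition $y_{gh} \cong y_g \otimes (g \cdot y_h)$ reduces to the homomorphism property $\Phi(gh) = \Phi(g) + \Phi(h)$, since $G$ acts trivially on the image of induction.

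Next, by the explicit description of $\theta_6$ given just before the proposition, $\theta_6([\{y_g\}])$ is the obstruction class in $\cohomology{3}{G}{A(\xdown)}$ associated to the $G$-action on the trivial gerbe $\trivialgerbe$ given by the morphisms $F_g = T_g \circ \isom(y_g, -)$. But this is precisely the lift of the $G$-action on $\trivialgerbe$ studied in Proposition \ref{prop_tg_induced_yg_has_no_obstruction}, where each $y_g$ is induced from an $\invariants$-torsor $\overline{y}_g$ and $g \mapsto \overline{y}_g$ defines a group homomorphism $G \to \cohomology{1}{\xdown}{\invariants}$. That proposition asserts that the obstruction class of this particular action is zero.

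There is no real obstacle here; the main thing to check carefully is the compatibility of two conventions: namely, that the assignment $g \mapsto \overline{y}_g$ appearing in Proposition \ref{prop_tg_induced_yg_has_no_obstruction} is the same data as a group homomorphism $\Phi \in \hom(G, \cohomology{1}{\xdown}{\invariants})$, and that the resulting $F_g$'s recover the lift used in defining $\theta_6$ on the cocycle $\{y_g\}$. Both follow directly from Lemma \ref{lemma-g-action-trivial-gerbe-is-1-cocycle} and the definition of $\theta_6$. Hence the composite sends every $\Phi$ to zero, proving the proposition.
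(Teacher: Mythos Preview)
Your proposal is correct and takes exactly the same approach as the paper: the paper's proof is the single sentence ``This follows immediately from Proposition \ref{prop_tg_induced_yg_has_no_obstruction},'' and you have simply unpacked that reference by tracing the definitions of the first map and of $\theta_6$ back to the setup preceding Proposition \ref{prop_tg_induced_yg_has_no_obstruction}. The only minor imprecision is your phrase ``a homomorphism into a trivial $G$-module,'' since $\cohomology{1}{\xdown}{A}$ need not be a trivial $G$-module; but you immediately correct this by noting that the image of induction lands in the $G$-invariants, which is all that is needed.
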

\begin{proof}
 This follows immediately from Proposition \ref{prop_tg_induced_yg_has_no_obstruction}. 
\end{proof}

\section{Application to Galois covers}
{
  
\begin{sloppypar}
Let $\xdown$ be a quasi-compact scheme or a quasi-compact Deligne-Mumford stack. Let $\gmsheaf$ denote the sheaf of multiplicative group on $\etalesite$. Let ${\pi: \xup \to \xdown}$ be a finite Galois \etale{} cover with Galois group $G$. Let ${\{\tau_g\}_{g \in G}}$ denote the automorphisms of $\xup$ over $\xdown$. 
\end{sloppypar}
\begin{proposition}\label{prop_galois_cover_line_bundle_exists}
  \begin{sloppypar}
    Assume that at least one of the following holds:
    \begin{enumerate}
    \item $\etalecohomology{2}{\xdown}{\gmsheaf}$ = 0.
    \item $\etalecohomology{2}{\xdown}{\gmsheaf}$ has no $n\mbox{-torsion}$ where $n$ is the order of $G$ and $\cohomology{2}{G}{\gmsheaf(\xup)} = 0$.
    \end{enumerate}
    Let ${\Phi: G \to \pic(\xdown)}$ be a group homomorphism. Then there exists a line bundle $L$ on $\xup$ such that for all $g \in G$, we have ${\pi^*\Phi(g) \cong L^{-1} \otimes {\tau_g}^*(L)}$.
    \
  \end{sloppypar}
  \end{proposition}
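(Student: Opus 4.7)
The plan is to recognize this as an instance of the long exact sequence of Figure \ref{figure-long-exact-sequence}, applied on the small \etale{} site of $\xdown$ with $E = \gmsheaf$ and with $M$ the $G$-torsor represented by $\pi : \xup \to \xdown$. With this choice, set $A := E[M] = \pi_{*}(\gmsheaf|_{\xup})$; then $A(\xdown) = \gmsheaf(\xup)$, $\invariants = \gmsheaf$ with $i : \invariants \to A$ the natural inclusion, $\cohomology{1}{\xdown}{\invariants} = \pic(\xdown)$, $\cohomology{2}{\xdown}{\invariants} = \etalecohomology{2}{\xdown}{\gmsheaf}$, and $\cohomology{1}{\xdown}{A} = \pic(\xup)$ equipped with the $G$-action by pullback via $\{\tau_g\}$ (Definition \ref{def-G-action-on-torsors-and-gerbes}). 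Since $\pi \circ \tau_g = \pi$, each $\pi^{*}\Phi(g)$ is $G$-invariant in $\pic(\xup)$, and $\Phi$ being a homomorphism gives $\pi^{*}\Phi(gh) = \pi^{*}\Phi(g) + \pi^{*}\Phi(h)$; together these say $g \mapsto \pi^{*}\Phi(g)$ is a $1$-cocycle. The desired isomorphism $\pi^{*}\Phi(g) \cong L^{-1} \otimes \tau_g^{*}(L)$ is exactly the assertion that the class $\xi \in \cohomology{1}{G}{\pic(\xup)}$ of this cocycle is a coboundary, so the proposition reduces to $\xi = 0$.

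Next, I would observe that $\xi$ is, by construction, the image of $\Phi \in \hom(G,\pic(\xdown))$ under the natural map $\hom(G,\pic(\xdown)) \to \cohomology{1}{G}{\pic(\xup)}$ appearing in Proposition \ref{prop-group-hom-are-killed-by-theta-6}. Applying that proposition yields $\theta_6(\xi) = 0$, so by exactness in Figure \ref{figure-long-exact-sequence} one may write $\xi = \theta_5(\beta)$ for some $\beta \in \ker\bigl(\etalecohomology{2}{\xdown}{\gmsheaf} \to \etalecohomology{2}{\xup}{\gmsheaf}^{G}\bigr)$. Under hypothesis (1) this kernel is zero, and the conclusion is immediate.

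Under hypothesis (2), I would combine two ingredients. First, the vanishing $\cohomology{2}{G}{\gmsheaf(\xup)} = 0$ together with exactness at $\theta_4$ in Figure \ref{figure-long-exact-sequence} forces $\theta_4 = 0$, so $\theta_5$ is injective on its domain; it is therefore enough to show $\beta = 0$. Second, for the finite \etale{} cover $\pi$ of degree $n$ one has the norm map $\pi_{*} : \etalecohomology{2}{\xup}{\gmsheaf} \to \etalecohomology{2}{\xdown}{\gmsheaf}$ satisfying $\pi_{*} \circ \pi^{*} = n \cdot \mathrm{id}$. Since $\pi^{*}(\beta) = 0$ by definition of $\beta$, we get $n\beta = 0$, and the no-$n$-torsion hypothesis on $\etalecohomology{2}{\xdown}{\gmsheaf}$ gives $\beta = 0$, hence $\xi = 0$.

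The main obstacle I anticipate is invoking the norm map $\pi_{*}$ and the identity $\pi_{*}\pi^{*} = n\cdot\mathrm{id}$ in the generality needed (a quasi-compact scheme, or a quasi-compact Deligne--Mumford stack): for schemes this is classical, while for a DM stack one still has the trace formalism for finite \etale{} morphisms on \etale{} cohomology of $\gmsheaf$, but pointing to a clean reference requires some care. Everything else in the argument is a direct unwinding of Figure \ref{figure-long-exact-sequence} combined with Proposition \ref{prop-group-hom-are-killed-by-theta-6}.
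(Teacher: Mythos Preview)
Your overall strategy matches the paper's exactly: set up the long exact sequence of Figure~\ref{figure-long-exact-sequence} in the Hochschild--Serre context, apply Proposition~\ref{prop-group-hom-are-killed-by-theta-6} to obtain $\theta_6(\xi)=0$, lift $\xi$ to a class $\beta$ (the paper writes $\alpha$) in the kernel inside $\etalecohomology{2}{\xdown}{\gmsheaf}$, and then argue that $\beta=0$ under either hypothesis.

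The only difference is your treatment of hypothesis~(2). The paper avoids the norm map altogether: since $\cohomology{1}{G}{\pic(\xup)}$ is annihilated by $n=|G|$ (group cohomology of a finite group is killed by its order), we have $n\xi=0$, hence $\theta_5(n\beta)=n\,\theta_5(\beta)=0$; injectivity of $\theta_5$ (from $\theta_4=0$) then gives $n\beta=0$, and the no-$n$-torsion assumption finishes. Your route via $\pi_{*}\pi^{*}=n\cdot\mathrm{id}$ is correct as well, but the obstacle you flag---justifying the trace formalism for DM stacks---is in fact avoidable, since the paper's argument needs only elementary group cohomology and the additivity of the spectral-sequence maps. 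As a minor aside, your own argument does not actually require the injectivity of $\theta_5$: once $\beta=0$, the conclusion $\xi=\theta_5(\beta)=0$ is immediate.
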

\begin{proof}
  We have the Hoschild-Serre spectral sequence with ${E_{2}^{p,q}=\cohomology{q}{G}{\etalecohomology{p}{\xup}{\gmsheaf}}}$ that converges to $\etalecohomology{p+q}{\xdown}{\gmsheaf}$. Consequently, we have a long exact sequence as in Figure \ref{figure-long-exact-sequence}, part of which reads as follows:
$$ \cohomology{2}{G}{\gmsheaf(\xup)} \xrightarrow{\theta_4} \text{ker}\left( \cohomology{2}{\xdown}{\gmsheaf} \to  \cohomology{2}{\xup}{\gmsheaf}^{G}\right) \xrightarrow{\theta_5} \cohomology{1}{G}{\pic(\xup)} \xrightarrow{\theta_6} \cohomology{3}{G}{\gmsheaf(\xup)} $$
 By the above Proposition \ref{prop-group-hom-are-killed-by-theta-6}, we see that ${\{\pi^*\Phi(g)\}_{g \in G}}$ is a ${1\mbox{-cocycle}}$ representing a cohomology class in ${\cohomology{1}{G}{\pic(\xup)}}$ that lies in the kernel of the map $\theta_6$. By exactness, it corresponds to a cohomology class $\alpha$ in $\etalecohomology{2}{\xdown}{\gmsheaf}$. If $\cohomology{2}{G}{\gmsheaf(\xup)}$ is zero, then this class must be $n\mbox{-torsion}$ as $\cohomology{1}{G}{\pic(\xup)}$ is $n\mbox{-torsion}$. Thus, either of the conditions allows us to conclude that $\alpha$ is zero. Therefore the cocycle ${\{\pi^*\Phi(g)\}_{g \in G}}$ must be a coboundary as claimed.
\end{proof}
}
\section{Appendix I: Fibered categories and pullbacks} \label{section-fibered-cat-pullbacks}
{
\newcommand{\ztwo}{\overline{{\integers}/{2\integers}}}
\newcommand{\zconstant}{\underline{\integers}}
 
\begin{sloppypar}
  Suppose $\gerbe$ is a fibered category over a category $\basecat$. A choice of pullbacks (See \cite[\href{https://stacks.math.columbia.edu/tag/02XN}{Tag 02XN}]{stacks-project}) for the category $\gerbe$ is a choice of a strongly cartesian morphism $f^*x \to x$ lying over $f$, for every morphism $f: V \to U$ in $\basecat$, and an object $x \in \objects(\gerbe_{U})$ over $U$. After making a choice of pullbacks, we have a pullback functor ${f^*: \gerbe_{U} \to \gerbe_{V} }$. Suppose we have a morphism ${{F: \gerbe^{1} \to \gerbe^{2}}}$ of fibered categories over $\basecat$. Then for $U \in \objects(\basecat)$, we have the functor ${F_{U}: \gerbe^{1}_{U} \to \gerbe^{2}_{U}}$. We fix a choice of pullbacks for both $\gerbe^{1}$ and $\gerbe^{2}$. Then for a morphism ${f: V \to U}$ in $\basecat$, we have two functors:
  $$ F_V \circ f_1^* \text{ and } f_2^* \circ F_U : \gerbe^{1}_{U} \to \gerbe^{2}_{V}.$$
For ${y \in \objects(\gerbe^{1}_{U})}$, we have the following two strongly cartesian morphisms:
\end{sloppypar}
\[
  \begin{tikzcd}
    (f_2^* \circ F)y \arrow{r} \arrow[d,dashrightarrow] & Fy  \arrow[d,dashrightarrow] & (F \circ f_1^* )y \arrow{r} \arrow[d,dashrightarrow] & Fy \arrow[d,dashrightarrow] \\
    U \arrow{r}{f} & V & U \arrow{r}{f} & V 
  \end{tikzcd}  
\]
Since both diagrams are strongly cartesian, there exists a unique morphism
$${(f_2^* \circ F)y \to (F \circ f_1^* )y}$$ 
that connects the above two squares such that the resulting diagram commutes. In other words, we have a unique isomorphism of functors ${F_V \circ f_1^* \cong f_2^* \circ F_U}$. In this sense, we can pretend that the functor $F$ commutes with the pullback functors.

\section{Appendix II: Necessity of local  vanishing} \label{section-necessity-of-local-vanishing}
In this section, we construct examples to demonstrate that in the main Theorem \ref{theorem-induced-from-invariants-when-cohomology-class-is-zero}, the conditions on the local vanishing of group cohomology classes are necessary. In all the examples, we use the group $\autgroup$ which we denote by $G$.
\begin{remark}\label{remark-local-vanishing-imply-stalk-cohomology-free}
  \begin{sloppypar}
    In the situation of Definition \ref{def-group-cohomlogies-vanish-locally}, if $j^{th}$ group cohomology classes locally vanish, then for any stalk $A_p$ of $A$, the group cohomology ${\cohomology{j}{G}{A_p}}$ is zero.
  \end{sloppypar}
\end{remark}
\begin{example}\label{example-p1-square-map}
  \begin{sloppypar}
    Let ${\pi: \pline \to \pline}$ be the squaring map $x \to x^2$ from the projective line over $\complexns$ to itself. Let $\gmsheaf$  denote the sheaf of invertible functions on the small \etale{} site of $\pline$ and let $A$ denote the sheaf $\pi_*(\gmsheaf)$. Let $G$ act on ${\pline \xrightarrow{\pi} \pline}$ via the map $x \to -x$. This gives us an action of $G$ on $A$. Let $A^G$ denote the sheaf of invariants.
  \end{sloppypar}
\end{example}
\begin{sloppypar}
The sheaf $A^G$ is isomorphic to $\gmsheaf$ on the compliment of $\{0,\infty\}$ where $\pi$ is \etale{}. The stalk $A_0$ at zero is ${(\gmsheaf)_{0} \oplus (\gmsheaf)_{0}}$ on which $G$ acts trivially. Therefore we see that $\cohomology{1}{G}{A_0}$ is non-zero. Thus the first cohomology classes do not locally vanish near $0$ or $\infty$.
\end{sloppypar}
\begin{lemma}\label{lemma-p1-square-map-induced-map-on-h1}
  \begin{sloppypar}
    We have ${\etalecohomology{1}{\pline}{A} \cong \integers}$ and  ${\etalecohomology{1}{\pline}{A^G} \cong \integers}$. Moreover, the map ${\etalecohomology{1}{\pline}{A^G} \to \etalecohomology{1}{\pline}{A}}$ is given by ${\integers \xrightarrow{\times 2} \integers}$.
  \end{sloppypar}
\end{lemma}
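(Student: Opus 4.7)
The plan is to reduce both cohomology groups to $\pic$ computations on $\pline$ via the Leray spectral sequence for the finite morphism $\pi$, and then identify the connecting map as pullback of line bundles.

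First I would handle $\etalecohomology{1}{\pline}{A}$. Since $\pi$ is finite, the higher direct images $R^{i}\pi_{*}\gmsheaf$ vanish for $i > 0$: the geometric fibers of $\pi$ are zero-dimensional, and étale cohomology of a zero-dimensional scheme vanishes in positive degrees. The Leray spectral sequence for $\pi$ therefore degenerates on the $E_{2}$-page, yielding $\etalecohomology{i}{\pline}{A} \cong \etalecohomology{i}{\xup}{\gmsheaf}$, where $\xup$ is the source copy of $\pline$. For $i = 1$ this gives $\pic(\pline) \cong \integers$.

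Next I would identify $A^{G}$ with $\gmsheaf$ on the target. The pullback map of étale sheaves $\gmsheaf \to \pi_{*}\gmsheaf = A$ sending $f \mapsto f \circ \pi$ factors through $A^{G}$ because $G$ acts on $\xup$ preserving $\pi$. To verify that $\gmsheaf \to A^{G}$ is an isomorphism I would check stalks. Away from $\{0,\infty\}$ the morphism $\pi$ is étale, and étale-locally $A \cong \gmsheaf \oplus \gmsheaf$ with $G$ swapping the factors, so the invariants are the diagonal, which is exactly the image of pullback. At a ramification point I would work in the strict henselization $R$ of $\structure_{\pline,0}$ with uniformizer $t$; the base change is $\spec R[s]/(s^{2} - t)$, which is again strictly henselian with uniformizer $s$, and the stalk of $A$ is $(R[s]/(s^{2} - t))^{\times}$ with $G$ acting by $s \mapsto -s$. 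Its $G$-invariants are $R^{\times}$, which matches the stalk of $\gmsheaf$. Hence $A^{G} \cong \gmsheaf$, and therefore $\etalecohomology{1}{\pline}{A^{G}} \cong \pic(\pline) \cong \integers$.

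Finally, the induced map $\etalecohomology{1}{\pline}{A^{G}} \to \etalecohomology{1}{\pline}{A}$ corresponds, under the identifications above, to the map of étale cohomology coming from the unit of adjunction $\gmsheaf \to \pi_{*}\pi^{-1}\gmsheaf = \pi_{*}\gmsheaf$. Composed with the Leray isomorphism of the first paragraph, this is precisely the pullback $\pi^{*} : \pic(\pline) \to \pic(\xup) = \pic(\pline)$. Since $\pic(\pline)$ is generated by $\structure(1)$ and $\pi^{*}\structure(1) = \structure(\deg \pi) = \structure(2)$, the map is multiplication by $2$, as claimed. The main technical point is the stalk check showing $A^{G} \cong \gmsheaf$ at the two ramified points; once that identification is in hand, everything else follows from Leray and the standard computation of $\pic(\pline)$.
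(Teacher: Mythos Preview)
Your argument is correct, and the overall architecture matches the paper: both use vanishing of higher direct images for the finite map $\pi$ to get $\etalecohomology{1}{\pline}{A}\cong\pic(\pline)\cong\integers$, and both identify the map $\etalecohomology{1}{\pline}{A^{G}}\to\etalecohomology{1}{\pline}{A}$ with $\pi^{*}$ on $\pic(\pline)$, hence multiplication by~$2$.

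The one genuine difference is in how $\etalecohomology{1}{\pline}{A^{G}}$ is computed. The paper does not claim $A^{G}\cong\gmsheaf$; it instead runs the short exact sequence $0\to\gmsheaf\to A^{G}\to A^{G}/\gmsheaf\to 0$, observes that the quotient is pushed forward from the closed subscheme $\{0,\infty\}$ and therefore has no higher cohomology, and concludes $\etalecohomology{1}{\pline}{A^{G}}\cong\etalecohomology{1}{\pline}{\gmsheaf}$. Your stalk check at the ramified points is sharper: writing the strict henselization upstairs as $R'=R[s]/(s^{2}-t)$ with $G$ acting by $s\mapsto -s$, the invariant units are exactly $R^{\times}$, so in fact $A^{G}\cong\gmsheaf$ globally and the quotient in the paper's sequence is zero. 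Your route is thus more direct; the paper's has the virtue of not requiring the stalk computation at the ramification points. Incidentally, your description of $A_{0}$ differs from the paper's surrounding text, which records $A_{0}$ as $(\gmsheaf)_{0}\oplus(\gmsheaf)_{0}$ with trivial $G$-action; your description $(R')^{\times}$ with the involution $s\mapsto -s$ is the correct \'etale stalk, and it still yields $\cohomology{1}{G}{A_{0}}\neq 0$ (the class of $-1$ is nontrivial), which is what the example needs.
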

\begin{proof}
Since $\pi$ is a finite morphism, the higher direct images along $\pi$ are zero (See \cite[\href{https://stacks.math.columbia.edu/tag/03QP}{Tag 03QP}]{stacks-project}). Therefore we have ${\cohomology{1}{\pline}{A} \cong \cohomology{1}{\pline}{\gmsheaf}}$ which is isomorphic to $\integers$. We have the following exact sequence of sheaves:
$$ 0 \to \gmsheaf \to \invariants \to \invariants / \gmsheaf \to 0 .$$
The sheaf $\invariants / \gmsheaf$ is a pushforward along the closed immersion $\{0,\infty\} \to \pline$. Since a closed immersion has no higher direct images, and since we are working over an algebraically closed field, we see that $A^G/\gmsheaf$ has no higher cohomology. Therefore from the long exact sequence of cohomologies arising from above exact sequence, we see that ${\cohomology{1}{\pline}{\invariants} \cong \cohomology{1}{\pline}{\gmsheaf} \cong \integers}$.
Thus, both the cohomology groups $\cohomology{1}{\pline}{\invariants}$ and $\cohomology{1}{\pline}{A}$ are isomorphic to $\pic(\pline)$. We omit some details, but the map between them corresponds to pulling back a line bundle along $\pi$ which is given by $l \to 2l$.
\end{proof}
The action of $G$ on $\cohomology{1}{\pline}{A}$ corresponds to pulling back a line bundle along the map ${x \to -x}$ and therefore it is trivial. The cohomology group $\cohomology{2}{G}{A(\pline)}$ is isomorphic to $\complexns^* / (\complexns^{*})^{2}$ which is zero. Let $y$ be an $A\mbox{-torsor}$ corresponding to a degree one line bundle. Then $y$ is stable under $G$ and therefore admits a lift of the $G$ action. Moreover, the obstruction class lies in the cohomology group $\cohomology{2}{G}{A(\pline)}$ which is zero. However, we see from the above lemma that $y$ is not induced from the invariants as the first group cohomology classes fail to vanish locally. 

\begin{example}\label{examples-circle-to-line}
  \begin{sloppypar}
    Let $S^1$ denote the unit circle on a cartesian plane. Let $X$ denote the diameter along the horizontal axis with end points $P$ and $Q$. We have the vertical projection map ${\pi: S^1 \to X}$. Let $\ztwo_{S^1}$ (respectively $\ztwo_{X}$) denote the constant sheaf with value $\autgroup$ on $S^1$ (respectively on $X$). Let $A$ denote the sheaf $\pi_*(\ztwo_{S^1})$ on $X$. We have an action of $G$ on $S^1$ given by the reflection along the horizontal axis which induces an action on the sheaf $A$.
  \end{sloppypar}
\end{example}
\begin{sloppypar}
  In the above example, for a sheaf $\mathscr{F}$ on $S^1$, the stalk $(\pi_*\mathscr{F})_x$ at a point $x \in X$ is ${\bigoplus_{s \in \pi^{-1}x} (\mathscr{F})_s}$. It follows that $\pi_*$ is exact and has no higher direct images. Therefore we have an isomorphism ${\cohomology{1}{X}{A} \cong \cohomology{1}{S^1}{\ztwo_{S^1}}}$ which is isomorphic to $\autgroup$. Consequently, we note that $G$ can only act trivially on $\cohomology{1}{X}{A}$. It is evident that we have $\invariants \cong \ztwo_{X}$ and therefore $\cohomology{1}{X}{\invariants}$ is zero. The group cohomology $\cohomology{1}{G}{A_{P}}$ for the stalk $A_{P}$ is $\autgroup$. Therefore the first group cohomology classes fail to vanish locally.
\end{sloppypar}
\begin{lemma}\label{lemma-obstruction-is-zero}
Let $y$ be the non-trivial $A\mbox{-torsor}$ on $S^1$. Then it admits a lift of the underlying action such that the obstruction class is zero. However, it is not induced from the invariants. 
\end{lemma}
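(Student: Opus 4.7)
The plan is to exhibit an involutive lift of the $G$-action on $y$, which forces the cochain $\chi_\lambda$ to vanish identically, and then to rule out $y$ being induced from the invariants by a direct cohomology computation.

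First, I will observe that $y$ admits some lift of the $G$-action. The group $\cohomology{1}{X}{A} \cong \autgroup$ has only two elements and $G$ acts by group automorphisms, so the action on $\cohomology{1}{X}{A}$ is trivial; hence $y$ is $G$-stable, and by Remark \ref{remark-tg-operators-define-action-on-sheaf-cohomology} admits a lift.

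Second, to show the obstruction class vanishes, it suffices to exhibit a lift $\{\lambda_g\}_{g \in G}$ for which $\lambda_g \circ \lambda_h = \lambda_{gh}$ holds on the nose, so that the cochain $\chi_\lambda$ of Definition \ref{def-2-cochain-for-torsor-action} is identically zero. Pushforward along $\pi$ identifies $A$-torsors on $X$ with $\ztwo_{S^1}$-torsors on $S^1$, under which $y$ corresponds to the non-trivial connected double cover. I realize $S^1 \subset \complexns$ so that the generator of the $G$-action is complex conjugation, and realize this double cover as the squaring map $p : \widetilde{S^1} \to S^1,\, w \mapsto w^2$; the map $\sigma(w) = \bar w$ is then an involution of $\widetilde{S^1}$ that covers complex conjugation on $S^1$ (since $p(\sigma(w)) = \overline{w^2}$) and commutes with the deck transformation $w \mapsto -w$. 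Because the $G$-action on the constant sheaf $\ztwo_{S^1}$ is trivial as a sheaf automorphism, $\sigma$ transfers through $p_*$ to an honest $G$-action on the torsor, and then through $\pi_*$ to an involutive lift $\{\lambda_g\}_{g \in G}$ of the $G$-action on $y$; hence $\chi_\lambda \equiv 0$ and the obstruction class in $\cohomology{2}{G}{A(X)}$ is trivial.

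Finally, $y$ is not induced from the invariants. The sheaf $\invariants$ is the constant sheaf $\underline{\autgroup}_X$, and $X$ is a closed interval, hence contractible, so $\cohomology{1}{X}{\invariants} = 0$. Every $\invariants$-torsor is therefore trivial, and any $A$-torsor induced from an $\invariants$-torsor is also trivial; since $y$ is non-trivial, it is not induced. The only delicate step is writing down the involution $\sigma$ and checking that it transfers correctly through the two pushforwards to an honest $G$-action on $y$; the cohomology computations are immediate.
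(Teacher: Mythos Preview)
Your proof is correct and takes a genuinely different route from the paper. You produce an explicit involutive lift: realizing the non-trivial torsor as the sheaf of sections of the squaring map $w\mapsto w^2$ on $S^1$, you observe that $\sigma(w)=\bar w$ is an involution covering the $G$-action and commuting with the deck transformation, so the induced $\lambda_g$ on $y$ satisfies $\lambda_g\circ\lambda_h=\lambda_{gh}$ on the nose and $\chi_\lambda\equiv 0$. The paper instead argues indirectly, without ever writing down a lift: for a connected open $U\subseteq X$ containing an endpoint, the preimage $\pi^{-1}(U)$ is connected, so the restriction $A(X)\to A(U)$ is a $G$-equivariant isomorphism and hence so is $\cohomology{2}{G}{A(X)}\to\cohomology{2}{G}{A(U)}$; since the obstruction class restricts to a coboundary wherever $y$ admits a section (Lemma~\ref{lemma-local-section-commutative-diagram-for-torsors}), it is zero near $P$ and therefore globally. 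Your approach is more hands-on and makes the vanishing transparent; the paper's is coordinate-free and illustrates the general mechanism by which local triviality propagates when restriction is an isomorphism. Both arguments finish the ``not induced'' part the same way, via $\cohomology{1}{X}{\invariants}=0$ on the contractible interval.

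One minor wording issue: the phrase ``$\sigma$ transfers through $p_*$'' is not quite accurate, since the torsor on $S^1$ is the sheaf of local sections of $p$, not a pushforward along $p$. What you mean is that $\sigma$ acts on local sections by $s\mapsto\sigma\circ s\circ\tau^{-1}$ (where $\tau$ is conjugation on $S^1$), and this action then pushes forward along $\pi$ to the $\rho_g$-equivariant involution on $y$. This is purely cosmetic and does not affect correctness.
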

\begin{proof}
Since $y$ represents a sheaf cohomology class in ${\cohomology{1}{\xdown}{A}}^G$, it admits a lift of the $G\mbox{-action}$. For a connected open set $U \subseteq X$ that contains either of the endpoints $P$ or $Q$, we have a $G\mbox{-equivariant}$ isomorphism $A(U) \cong A(X)$. Therefore we have an isomorphism ${\cohomology{2}{G}{A(U)} \cong \cohomology{2}{G}{A(X)}}$. Since the obstruction class locally vanishes everywhere, it is locally zero near $P$ or $Q$ and therefore is globally zero. There are no non-trivial $\invariants\mbox{-torsors}$ and therefore we conclude that $y$ is not induced from the invariants. 
\end{proof}
\begin{example}\label{examples-sphere-to-sphere}
  \begin{sloppypar}
    Let $S^2$ be the two dimensional sphere which we regard as the analytification of the complex projecive line. Let $\pi: S^2 \to S^2$ be the squaring map as in Example \ref{example-p1-square-map}. Let $\zconstant$ denote the constant sheaf  with value $\integers$ on $S^2$. Let $A$ denote the sheaf $\pi_*(\zconstant)$ on $S^2$. We have the action of $G$ on $S^2$ as in Example \ref{example-p1-square-map} and thus we get an action of $G$ on $A$. 
  \end{sloppypar}
\end{example}
\begin{sloppypar}
By the same reasoning as in Example \ref{examples-circle-to-line}, the higher direct images of $\pi_*$ are zero. Therefore we have ${\cohomology{2}{S^2}{A} \cong \cohomology{2}{S^2}{\zconstant}}$ which is isomorphic to $\integers$. It is easy to see that $A^G$ is isomorphic to the sheaf $\zconstant_{S^2}$. Thus both  $\cohomology{2}{S^2}{A}$ and $\cohomology{2}{S^2}{A^G}$ are isomorphic to the second cohomology group of $S^2$ which is $\integers$. Since $\pi$ is a map of degree two (See Chapter 2.2, \cite{hatcher2002algebraic}), the induced morphism ${\cohomology{2}{S^2}{\invariants} \to \cohomology{2}{S^2}{A}}$ is given by multiplication by two. The involution $z \to -z$ is homotopic to identity, and therefore the induced action of $G$ on $\cohomology{2}{S^2}{A}$ is trivial. Since $\pi$ is \etale{} away from $\{0,\infty\}$, the higher group cohomologies locally vanish on ${S^2 - \{0,\infty\}}$. For a connected open set $U$ containing $0$ or $\infty$, we have ${\cohomology{1}{G}{A(U)} \cong \integers[2]}$ which is zero. Thus, we see that the first group cohomology locally vanishes everywhere. However, the second cohomology $\cohomology{2}{G}{A_0}$ at the stalk at zero is $\autgroup$. Therefore the second group cohomology classes do not locally vanish near $0$ or $\infty$. Let $\gerbe$ be a gerbe banded by $A$ that is a generator of the group $\cohomology{2}{S^2}{A}$. Since $\gerbe$ is stable under the action of $G$, we have a lift of the $G\mbox{-action}$ on $\gerbe$. Since  $\cohomology{3}{G}{A(S^2)}$ is zero, we see that the obstruction class for the $G\mbox{-action}$ vanishes. However, $\gerbe$ is not induced from the invariants.
\end{sloppypar}

}
\bibliography{tropical_bib} 
\bibliographystyle{abbrv}
\end{document}